\documentclass{ieeetj}
\usepackage{cite}
\usepackage{amsmath,amssymb,amsfonts}
\usepackage{algorithmic}
\usepackage{graphicx,color}
\usepackage{textcomp}
\usepackage{xcolor}
\usepackage{hyperref}
\hypersetup{hidelinks=true}
\usepackage{algorithm,algorithmic}
\usepackage{amsmath,amsfonts}
\usepackage{array}
\usepackage[caption=false,font=normalsize,labelfont=sf,textfont=sf]{subfig}
\usepackage{textcomp}
\usepackage{stfloats}
\usepackage{url}
\usepackage{verbatim}
\usepackage{graphicx}
\usepackage{cite}
\usepackage{graphicx}
\usepackage{amsmath}
\usepackage{amsfonts}
\usepackage{amsthm}
\usepackage{float}
\usepackage[dvipsnames]{xcolor}

\newtheorem{theorem}{Theorem}

\DeclareMathOperator*{\trace}{Tr}


\newcommand{\reals}{{\mathbb R}}


\newcommand{\inner}[2]{\langle #1, #2 \rangle}

\def\BibTeX{{\rm B\kern-.05em{\sc i\kern-.025em b}\kern-.08em
    T\kern-.1667em\lower.7ex\hbox{E}\kern-.125emX}}
\AtBeginDocument{\definecolor{tmlcncolor}{cmyk}{0.93,0.59,0.15,0.02}\definecolor{NavyBlue}{RGB}{0,86,125}}

\def\authorrefmark#1{\ensuremath{^{\textbf{#1}}}}

\usepackage{ifthen}

\newcommand\revise[1]{\textcolor{black}{#1}}

\begin{document}
\bstctlcite{IEEEexample:BSTcontrol}

\newboolean{arxiv}   
\setboolean{arxiv}{True}
\ifthenelse{\NOT \boolean{arxiv}}{
}{}

\markboth{RJD-BASE: Multi-Modal Spectral Clustering via Randomized Joint Diagonalization}{He, Pados, and Kressner}

\title{RJD-BASE: Multi-Modal Spectral Clustering via Randomized Joint Diagonalization}

\author{Haoze He\authorrefmark{1}, Artemis A. Pados\authorrefmark{2},\\ and Daniel Kressner \authorrefmark{1}}
\affil{\'Ecole Polytechnique F\'ed\'erale de Lausanne (EPFL), Institute of Mathematics, 1015 Lausanne, Switzerland.}
\affil{Massachusetts Institute of Technology, Department of Electrical Engineering and Computer Science, 
Cambridge, Massachusetts, USA}
\corresp{Corresponding author: Haoze He (email: haoze.he@epfl.ch).}

\begin{abstract}
We revisit the problem of spectral clustering in multimodal settings, where each data modality is encoded as a graph Laplacian. While classical approaches - including joint diagonalization, spectral co-regularization, and multiview clustering - attempt to align embeddings across modalities, they often rely on costly iterative refinement and may fail to directly target the spectral subspace relevant for clustering. In this work, we introduce two key innovations. First, we bring the power of randomization to this setting by sampling random convex combinations of Laplacians as a simple and scalable alternative to explicit eigenspace alignment. Second, we propose a principled selection rule based on Bottom-$k$ Aggregated Spectral Energy (BASE) - a $k$-dimensional extension of the directional smoothness objective from recent minimax formulations - which we uniquely apply as a selection mechanism rather than an optimization target. The result is \textbf{Randomized Joint Diagonalization with BASE Selection (RJD-BASE)}, a method that is easily implementable, computationally efficient, aligned with the clustering objective, and grounded in decades of progress in standard eigensolvers. Through experiments on synthetic and real-world datasets, we show that RJD-BASE reliably selects high-quality embeddings, outperforming classical multimodal clustering methods at low computational cost.
\end{abstract}

\begin{IEEEkeywords}
Graph Laplacian, joint diagonalization, multimodal learning, randomized numerical linear algebra, spectral clustering 
\end{IEEEkeywords}

\maketitle

\section{INTRODUCTION}

\IEEEPARstart{S}{pectral} clustering is a widely used technique for discovering latent group structure in data. \revise{It constructs a graph from pairwise similarities between data points and uses the eigenvectors of the graph Laplacians to embed data for clustering. Graph representations are particularly effective because they naturally encode pairwise relationships, thereby transforming the clustering task into a graph partitioning problem.} In multimodal settings---where each data modality provides a different perspective on the same set of samples---it is natural to represent each modality with its own graph and seek a shared low-dimensional embedding that captures the common latent structure \cite{von2007tutorial}.

A central challenge in this setting is how to aggregate the modality-specific Laplacians in a way that retains the most informative directions for clustering. Classical approaches---including joint diagonalization, spectral co-regularization, and multiview clustering---aim to align spectral representations across modalities, but often do so by solving non-convex optimization problems and iteratively updating. These procedures can be computationally expensive and may fail to directly target the spectral subspace that underlies clustering---namely, the subspace spanned by the bottom-$k$ eigenvectors associated with the $k$ smallest nonzero eigenvalues \cite{JADE,ablin:hal-01936887,perry2021mvlearn, kumarconf, kumar2011coreg, bickel2004icdm}. Recent work~\cite{coifman2023common} has proposed alternatives based on the single-directional smoothness of graph Laplacians, culminating in selecting a convex combination of Laplacians by maximizing the smallest nonzero eigenvalue. While this approach may align more directly with the objectives of spectral clustering, it captures only a single direction at a time.

In this work, we make two key contributions. First, we introduce randomization into the setting of multimodal spectral clustering by sampling random convex combinations of Laplacians. Second, we extend the single-directional smoothness framework to a $k$-dimensional formulation that evaluates the aggregated spectral energy of the eigenvectors associated with the $k$ smallest nonzero eigenvalues. We use this \textbf{Bottom-$k$ Aggregated Spectral Energy (BASE)} objective not as a target for optimization, but as a principled selection criterion among random samples.
The combination of these two techniques results in \textbf{Randomized Joint Diagonalization with BASE Selection (RJD-BASE)}.  RJD-BASE leverages decades of advances in efficient standard eigensolvers~\cite{laug,Bai2000}. It is parallelizable and scalable, requiring no optimization or initialization, and consistently delivers high-quality embeddings. We benchmark its performance against a wide range of classical multimodal clustering methods and show that RJD-BASE outperforms these techniques while simultaneously running at a low computational cost.

\textit{Notation:} $(\cdot)^\top$,  $\trace(\cdot)$ and $\|\cdot\|_F$ denote transpose,  trace and Frobenius norm of matrices, respectively. Bold upper-case letters (e.g., $\mathbf{A}$) denote matrices or matrix-valued functions, bold lower-case letters (e.g., $\mathbf{x}$) denote column vectors or vector-valued functions, and italic letters (e.g., $k$, $f$) denote scalars or scalar-valued functions. The $(i,j)$th entry of a matrix $\mathbf{A}$ is denoted by $a_{ij}$, while its $i$th column is denoted by $\mathbf{a}_i$. The $i$th entry of a vector $\mathbf{a}$ is denoted by $a_i$.  
 $\mathbf{I}_N$ is the $N \times N$ identity matrix, $\mathbf{0}$ and $\mathbf{1}$ denote  the vector/matrix of all ones, respectively. 
Finally, we will make use of the $m$-dimensional standard simplex
\begin{equation} \label{eq:simplex} 
 \Delta^{m-1} = \{\mathbf{x}\in\reals^m: \sum_i x_i =1, x_i\geq0\}.
\end{equation}

\section{BACKGROUND AND RELATED WORK}
\label{background}

We first recall the standard (single-modality) spectral clustering setup. Let \(\mathbf{W}\in\mathbb{R}^{N\times N}\) be a symmetric, nonnegative adjacency matrix encoding a \emph{connected}, \emph{undirected} weighted graph on \(N\) nodes (data points). We interpret every entry \(w_{pq} = w_{qp} \ge 0\) as pairwise similarity between nodes \(p\) and \(q\) in the  graph and we set \(w_{pp}=0\). In practice, \(\mathbf{W}\) may be formed from data features via a symmetric similarity function \(s(p,q)\) (e.g., an RBF kernel), or acquired from observed weighted edges. Throughout, we use the symmetric normalized Laplacian \cite{spectralgraphtheory}
\[
\mathbf{L}\;=\;\mathbf{I}_N-\mathbf{D}^{-1/2}\mathbf{W}\mathbf{D}^{-1/2}
\]
where the diagonal degree matrix $\mathbf{D}$ is defined by its diagonal entries $D_{pp} = \sum_q w_{pq}$ for $p = 1,\ldots,N$. 
Note that \(\mathbf{L}\) is symmetric and positive semidefinite by construction.

It is well known that the smallest eigenvalue \(\lambda_0\) of $\mathbf{L}$ is always zero and, because the graph is connected, its second smallest eigenvalue $\lambda_1$ is positive~\cite{spectralgraphtheory}. More generally, we sort the eigenvalues of $\mathbf{L}$ in increasing order,
\[
 0=\lambda_0<\lambda_1\le\lambda_2 \le \cdots\le\lambda_{N-1},
\]
$\mathbf{x}_0,\mathbf{x}_1,\ldots,\mathbf{x}_{N-1}$ denote the corresponding eigenvectors .
In particular, we refer to $\lambda_1,\ldots,\lambda_k$ as the \emph{bottom-$k$} eigenvalues and $\mathbf{x}_1,\ldots,\mathbf{x}_k$ as the bottom-$k$ eigenvectors.

For a target of \(k\) clusters, we form the embedding matrix
\[
\mathbf{X}
\;=\;
\big[\mathbf{x}_1,\ldots,\mathbf{x}_k\big]\in\mathbb{R}^{N\times k}.
\]
Each row of \(\mathbf{X}\) is the embedded representation of a data point. \revise{Typically,} the \(k\)-means algorithm, \revise{which is guaranteed to converge to local optima,} is applied to the rows of $\mathbf{X}$ to obtain cluster assignments \cite{von2007tutorial},  \revise{More recently, deep clustering frameworks have emerged to enhance spectral methods by jointly optimizing feature representation and cluster assignment \cite{deeplearning1, deeplearning2}.}

In the fully coupled multimodal case, we consider a family of Laplacians \(\mathbf{L}_1,\ldots,\mathbf{L}_m\) on the same \(N\) data points (in matching order). Our goal is to obtain a shared embedding \(\mathbf{X}\in\mathbb{R}^{N\times k}\) whose columns align with the clustering-relevant subspaces across modalities. 

To contextualize our proposed approach, we will first review several classical multimodal clustering methods that aim at aligning eigenspaces across modalities. These methods will serve as the baselines in our experimental comparisons.

\subsection{Joint Diagonalization-Based Multimodal Spectral Clustering}
\label{jd_background}

A natural approach to multimodal spectral clustering is to align the eigenspaces of the modality-specific Laplacians $\mathbf{L}_1, \ldots, \mathbf{L}_m \in \mathbb{R}^{N \times N}$ through \emph{joint (approximate) diagonalization} (JD). This strategy, originally developed for solving blind source separation problems in signal processing~\cite{Belouchrani1997} and later applied in various multiview clustering settings \cite{7053905, 9599440, 6858515}, seeks an orthogonal matrix $\mathbf{Q} \in \mathbb{R}^{N \times N}$ that makes all transformed matrices $\mathbf{Q}^\top \mathbf{L}_i \mathbf{Q}$ as diagonal as possible, thereby producing a shared approximate basis of eigenvectors across modalities. 

Joint diagonalization is commonly formulated as an optimization problem aimed at minimizing a prescribed measure of off-diagonality; two widely used approaches are JADE~\cite{JADE} and QN-Diag~\cite{ablin:hal-01936887}. JADE minimizes the sum of squared off-diagonal entries using a generalization of the classical Jacobi method~\cite{MR1238912}, which can be viewed as a block coordinate descent optimization technique. The Quasi-Newton Diagonalization (QN-Diag) method~\cite{ablin:hal-01936887} uses a different off-diagonality loss function specifically designed for symmetric positive-definite matrices, which extends to graph Laplacians for connected graphs by ignoring the single zero eigenvalue.  QN-Diag updates $\mathbf{X}$ using quasi-Newton steps.

Compared to JADE, QN-Diag typically offers modest improvements in computational efficiency, particularly for large $N$, while pursuing a similar objective. This behavior has been observed in several experimental studies, including~\cite{ablin:hal-01936887, ablin2019quasi}. However, JD-based methods for multiview clustering face two key limitations:
\begin{enumerate}
    \item Computational inefficiency: Both JADE and QN-Diag recover all $N$ joint eigenvectors, whereas spectral clustering requires only the bottom-$k$ eigenvectors. This full-spectrum computation adds substantial overhead. 
    \item Spectral distortion: Because the optimization considers the entire spectrum, alignment errors in large eigenvector components can adversely affect the quality of the clustering-relevant subspace, as will be shown by numerical experiments in Section~\ref{exp}-\ref{rjdbase_qn_diag_jade}.
\end{enumerate}
These drawbacks motivate alternatives - such as our RJD-BASE framework - that avoid full joint diagonalization and directly target the clustering-relevant subspace.

\subsection{Multiview Spectral Clustering (MVSC)}
Widely applied in the literature \cite{lei2024anchor, 9652464, 7053905}, Multiview Spectral Clustering (MVSC) \cite{perry2021mvlearn,kumarconf} extends spectral clustering to multiple modalities by iteratively co-regularizing per-view embeddings. For each modality \(i=1,\ldots,m\), with Laplacian \(\mathbf{L}_i\) from affinity \(\mathbf{W}_i\), initialize
\[
\mathbf{X}_i^{(0)} \in \mathbb{R}^{N\times k}
\]
as the bottom-\(k\) eigenvectors of \(\mathbf{L}_i\). At iteration \(j\), set
\[
\mathbf{S}_i^{(j)} = \mathrm{sym}\Big(\sum_{r\neq i}\mathbf{X}_r^{(j-1)}\mathbf{X}_r^{(j-1)\top}\mathbf{W}_i\Big),
\]
with the symmetrizer $\mathrm{sym}(\mathbf{A})=\tfrac12(\mathbf{A}+\mathbf{A}^\top)$. We then build \(\mathbf{L}_i^{(j)}\) as the graph Laplacian for the weight matrix \(\mathbf{S}_i^{(j)}\) and update \(\mathbf{X}_i^{(j)}\) as the bottom-\(k\) eigenspace of \(\mathbf{L}_i^{(j)}\). After a fixed number of iterations \(J\), the final embedding concatenates views:
\begin{equation} \label{eq:concatenate}
\mathbf{X}=\big[\mathbf{X}_1^{(J)} \,\big|\, \mathbf{X}_2^{(J)} \,\big|\, \cdots \,\big|\, \mathbf{X}_m^{(J)}\big]\in\mathbb{R}^{N\times mk}.
\end{equation}

\subsection{Co-Regularized Multiview Spectral Clustering (CoReg-MVSC)}
CoReg-MVSC \cite{perry2021mvlearn,kumar2011coreg,10495145,lei2024anchor} couples the different modalities on the graph Laplacian operator level rather than at the affinity level as in MVSC.
Concretely, CoReg-MVSC also initializes \(\mathbf{X}_i^{(0)}\) as the bottom-\(k\) eigenvectors of \(\mathbf{L}_i\) but at iteration \(j\) it performs the update
\[
\tilde{\mathbf{L}}_i^{(j)} \;=\; \mathbf{L}_i \;+\; \lambda \sum_{r\neq i} \mathbf{X}_r^{(j-1)}\mathbf{X}_r^{(j-1)\top}
\]
and computes $\mathbf{X}_i^{(j)}$ as the bottom-$k$ eigenvectors of \(\tilde{\mathbf{L}}_i^{(j)}\).
After a fixed number of iterations \(J\), the final embedding concatenates views, as in~\eqref{eq:concatenate}.

\subsection{Multiview K-Means (MV-KMeans)}
Multiview K-Means (MV-KMeans) \cite{perry2021mvlearn, bickel2004icdm} extends classical $k$-means clustering to two-view settings ($m=2$) by leveraging a co-EM (co-Expectation Maximization) strategy.
The algorithm initializes centroids  $\mathbf{C}^{(1)}_{0},\mathbf{C}^{(2)}_{0} $ separately for each modality either randomly or using $k$-means++~\cite{kmean++}. At iteration $t$, using \(  \mathbf{C}^{(2)}_{t-1}  \) as the centroids for view 1,  it computes cluster assignments by maximum likelihood and then updates   \( \mathbf{C}^{(1)}_t \). At iteration $t+1$, using \(  \mathbf{C}^{(1)}_{t}  \) as the centroids for view 2, it computes cluster assignments and then updates \( \mathbf{C}^{(2)}_{t+1} \). This alternating process continues until a predefined stopping criterion is satisfied \cite{multiviewkmeans}. At convergence, the final label for each data point is determined by selecting the cluster with the minimal average posterior probability across both views.

The alternating procedure promotes the clustering structure in each modality to agree with the latent structure captured by the other.

\subsection{Multiview Spherical K-Means (MV-SphKMeans)}

Multiview Spherical K-Means (MV-SphKMeans) \cite{perry2021mvlearn}, \cite{bickel2004icdm} alters MV-KMeans by using cosine similarity instead of the Euclidean distance metric. This modification makes the method suitable for data where directional information is more meaningful than magnitude.

\ifthenelse{\boolean{arxiv}}
  {\section{OUR FRAMEWORK}
\label{framework}

Our framework for multimodal spectral clustering departs from traditional full-spectrum alignment techniques, focusing directly on recovering the informative low-frequency components efficiently and accurately. For this purpose, it utilizes randomized sampling and selection based on $k$-dimensional spectral smoothness within the clustering-relevant subspace.

\subsection{Randomized Joint Diagonalization (RJD)}

Randomized Joint Diagonalization (RJD) \cite{he2024randomized} is a randomized method for approximately diagonalizing  a family of symmetric matrices by constructing random linear combinations of input matrices and successively performing  eigendecompositions to recover (approximate) common eigenvectors. For commuting matrices, RJD jointly diagonalizes each matrix with probability one. For nearly commuting matrices, RJD remains robust in the sense that, with high probability, it approximately diagonalizes each matrix with an error on the level of the input error. \revise{While these theoretical guarantees assume (near) commutativity, the graph Laplacians for
multimodal data are not expected to satisfy this property;
rather, this framework serves as a theoretical motivation for
investigating linear combinations.}

The random (convex) combinations $\mathbf{L}(\pmb{\mu}) = \sum_{i=1}^m \mu_i \mathbf{L}_i$ used by RJD reflect randomized aggregations of modalities. The bottom-$k$ eigenvectors of $\mathbf{L}(\pmb{\mu})$ serve as an embedding $\mathbf{X} \in \mathbb{R}^{N \times k}$ used for downstream $k$-means clustering on its rows.  This approach is computationally efficient, requiring only partial eigendecompositions (avoids computing or optimizing a full joint diagonalizer), and scales well with the number of modalities. 

\subsection{Single-Directional Smoothness}

In \cite{coifman2023common}, a variational principle for selecting optimal convex combinations of graph Laplacians based on smoothness of functions on graphs has been established. 

For a graph $G$ with $N$ nodes, adjacency matrix $\mathbf{W}$ and  graph Laplacian $\mathbf{L}$,
the Rayleigh quotient
\[s_{\mathbf{L}}(\mathbf{x}) := \mathbf{x}^\top\mathbf{L}\mathbf{x} = \sum_{p\neq q}w_{pq}(x_p-x_q)^2\]
can be viewed as measuring the ``smoothness'' of a function with samples $\mathbf{x} \in \mathbb{R}^N$ on the $N$ nodes  
of the graph~\cite{coifman2023common}. In particular, large jumps across adjacent nodes get penalized.

For a family of connected graphs $\mathcal{G}=\{G_1,\ldots,G_m\}$ on the same $N$ nodes and with graph Laplacians $\mathbf{L}_1,\ldots,\mathbf{L}_m$, it has been proposed in~\cite{coifman2023common} to measure the smoothness of $\mathbf{x} \in \mathbb{R}^N$ over the nodes by the  worst-case smoothness, that is, 
\[s_{\mathcal{G}}(\mathbf{x}):= \max_{i=1,\ldots,m}s_{\mathbf{L}_i}(\mathbf{x}) = \|[ s_{\mathbf{L}_1}(\mathbf{x}),\ldots,s_{\mathbf{L}_m}(\mathbf{x})]\|_\infty\]
where $\|\cdot\|_\infty$ denotes the maximum norm. A key insight from~\cite{coifman2023common} is that the optimal $\mathbf{x} \in \reals^N$ (minimizing the worst-case smoothness) can be found by considering the second smallest eigenvalue $\lambda_1(\mathbf{L}(\pmb{\mu}))$ of linear combinations taking the form
\[\mathbf{L}(\pmb{\mu}):=\sum_{i=1}^m\mu_i\mathbf{L}_i, \quad \pmb{\mu} \in \Delta^{m-1},\]
where we recall that $\Delta^{m-1}$ denotes the standard $m$-dimensional simplex~\eqref{eq:simplex}.

\begin{theorem}[Theorem 2 in \cite{coifman2023common}]\label{thm:coifman2023}
   Assuming that $\lambda_1(\mathbf{L}(\pmb{\mu}))$ is a simple eigenvalue for every $\pmb{\mu} \in \Delta^{m-1}$, it holds that
   \[\min_{\substack{\mathbf{x}\top\mathbf{1} =0\\ \|\mathbf{x}\|_2 =1}} \max_{i=1,\ldots,m} \mathbf{x}^\top\mathbf{L}_i\mathbf{x} = \max_{\pmb{\mu} \in \Delta^{m=1}}\min_{\substack{\mathbf{x}^\top\mathbf{1} =0\\ \|\mathbf{x}\|_2 =1}} \mathbf{x}^\top\mathbf{L}(\pmb{\mu})\mathbf{x}.\]
\end{theorem}

Noting that the vector $\mathbf{1}$ is always an eigenvector belonging to the smallest eigenvalue $\lambda_0(\mathbf{L}(\pmb{\mu}))$, it follows that 
\[\min_{\substack{\mathbf{x}^\top\mathbf{1} =0\\ \|\mathbf{x}\|_2 =1}} \mathbf{x}^\top\mathbf{L}(\pmb{\mu})\mathbf{x}= \lambda_1(\mathbf{L}(\pmb{\mu})).\]
Thus, optimizing single-directional smoothness reduces to the eigenvalue optimization problem
\begin{equation}
\label{vector_vers}
\pmb{\mu}^* = \arg\max_{\pmb{\mu} \in \Delta^{m-1}} \lambda_1(\mathbf{L}(\pmb{\mu})).
\end{equation}
By Theorem~\ref{thm:coifman2023}, the optimal $\mathbf{x}$ is obtained as an eigenvector belonging to $\lambda_1(\mathbf{L}(\pmb{\mu^*}))$, which can serve as a one-dimensional embedding of the $N$ nodes across the whole family of graphs~\cite{coifman2023common}.

In the following, we will refer to the objective function $\lambda_1(\mathbf{L}(\pmb{\mu}))$ from~\eqref{vector_vers} as the \textbf{single-directional smoothness objective}.

\subsection{Bottom-$k$ Aggregated Spectral Energy (BASE) Smoothness}

We now aim at extending the concept of single-directional smoothness to suit the needs of spectral clustering, which requires a $k$-dimensional embedding matrix $\mathbf{X} \in \mathbb{R}^{N \times k}$ rather than a single vector.

Given a graph Laplacian $\mathbf{L}$ and a matrix $\mathbf{X} \in \mathbb{R}^{N \times k}$ with orthonormal columns (i.e., $\mathbf{X}^\top \mathbf{X} = \mathbf{I}_k$), we define the total smoothness of $\mathbf{X}$ with respect to $\mathbf{L}$ as

\begin{equation}\label{eq:S_L}
    s_{\mathbf{L}}(\mathbf{X}) = \trace(\mathbf{X}^\top \mathbf{L} \mathbf{X}) = \sum_{i=1}^k \mathbf{x}_i^\top \mathbf{L} \mathbf{x}_i.
\end{equation}
Each column $\mathbf{x}_i$ of $\mathbf{X}$ corresponds to a different direction in the embedding. 
Under the additional constraint $\mathbf{X}^\top\mathbf{1} = \mathbf{0}$, the matrix $\mathbf{X}$ formed by the bottom-$k$ eigenvectors of $\mathbf{L}$ minimizes~\eqref{eq:S_L}; see, e.g.,~\cite{matrixanalyis}.

In analogy to the single-vector case,
for a family of connected graphs $\mathcal{G}=\{G_1,\ldots,G_m\}$ with graph Laplacians $\mathbf{L}_1,\ldots,\mathbf{L}_m$
we measure the worst-case smoothness of an embedding $\mathbf{X} \in \mathbb{R}^{N \times k}$:
\begin{equation}
\label{eq:defsg}
s_\mathcal{G}(\mathbf{X}):=\| [ s_{\mathbf{L}_1}(\mathbf{X}),\ldots,s_{\mathbf{L}_m}(\mathbf{X}) ]\|_\infty. 
\end{equation}
The following result generalizes Theorem~\ref{thm:coifman2023} from one- to $k$-dimensional embeddings.

\begin{theorem}\label{thm:main_thm}
   Assuming that $\lambda_{k}(\mathbf{L}(\pmb{\mu})) < \lambda_{k+1}(\mathbf{L}(\pmb{\mu}))$ holds for every $\pmb{\mu} \in \Delta^{m-1}$, we have that
   \begin{equation}\label{eq:minimax_k}
       \min_{\substack{\mathbf{X}^\top\mathbf{1} =\mathbf{0}\\\mathbf{X}^\top\mathbf{X}=\mathbf{I}_k}} s_\mathcal{G}(\mathbf{X}) = \max_{\pmb{\mu} \in \Delta^{m-1}}\sum_{i=1}^{k}\lambda_i(\mathbf{L}(\pmb{\mu})).
   \end{equation}
\end{theorem}
\begin{proof}
Set $f(\pmb{\mu}, \mathbf{X} ):= \trace(\mathbf{X}^\top\mathbf{L}(\pmb{\mu})\mathbf{X})$.
Using dual norms,
$\|\mathbf u\|_\infty = \max\limits_{\|\mathbf v\|_1 =1} \inner{\mathbf u}{\mathbf v}$,
allows us to rewrite $s_\mathcal{G}(\mathbf{X})$ as
\begin{align*}
        s_\mathcal{G}(\mathbf{X}) &= \max_{\|\pmb{\mu}\|_1 =1 } \sum_{i = 1}^m \mu_i \trace(\mathbf{X}^\top\mathbf{L}_i\mathbf{X}) \\
        &= \max_{\pmb{\mu} \in \Delta^{m-1}} \sum_{i = 1}^m \mu_i \trace(\mathbf{X}^\top\mathbf{L}_i\mathbf{X}) = \max_{\pmb{\mu} \in \Delta^{m-1}} f(\pmb{\mu}, \mathbf{X}),
\end{align*}
where the second equality follows from the fact that $\trace(\mathbf{X}^\top\mathbf{L}_i\mathbf{X})$ is non-negative. On the other hand, the classical Ky-Fan theorem~\cite{stewardsun,matrixanalyis} implies that
\begin{equation}
 \label{eq:kyfan}
 \min_{\substack{\mathbf{X}^\top\mathbf{1} =\mathbf{0}\\\mathbf{X}^\top\mathbf{X}=\mathbf{I}_k}} f(\pmb{\mu}, \mathbf{X} ) =\sum_{i=1}^{k}\lambda_i(\mathbf{L}(\pmb{\mu}))
 =: g(\pmb{\mu}),
\end{equation}

where the minimum is assumed by the matrix
$\mathbf{X}$ containing an orthonormal basis of eigenvectors for $\lambda_1(\mathbf{L}(\pmb{\mu})),\ldots,\lambda_k(\mathbf{L}(\pmb{\mu}))$. \revise{Since $f(\pmb{\mu},\mathbf{X})$ is affine in $\pmb{\mu}$ for fixed $\mathbf{X}$, this variational characterization also shows that $g(\pmb{\mu})$, as the pointwise minimum of affine functions, is concave in $\pmb{\mu}$.}
In summary,~\eqref{eq:minimax_k} is equivalent to establishing
\begin{equation} \label{eq:maxminaux}
 \min_{\substack{\mathbf{X}^\top\mathbf{1} =\mathbf{0}\\\mathbf{X}^\top\mathbf{X}=\mathbf{I}_k}} \max_{\pmb{\mu} \in \Delta^{m-1}} f(\pmb{\mu}, \mathbf{X} ) = 
 \max_{\pmb{\mu} \in \Delta^{m-1}}\min_{\substack{\mathbf{X}^\top\mathbf{1} =\mathbf{0}\\\mathbf{X}^\top\mathbf{X}=\mathbf{I}_k}} f(\pmb{\mu}, \mathbf{X} ).
\end{equation}

To prove~\eqref{eq:maxminaux}, choose $\pmb{\mu}^* \in \Delta^{m-1}$ that maximizes the eigenvalue sum $g(\pmb{\mu})$ from~\eqref{eq:kyfan}. Letting
$\mathbf{X}^*$ denote the corresponding
orthonormal basis of eigenvectors, we clearly have that
\begin{equation} \label{eq:saddlepoint1}
 f(\pmb{\mu}^*, \mathbf{X}^*) \le f(\pmb{\mu}^*, \mathbf{X})
\end{equation}
for all feasible $\mathbf{X}$. On the other hand, 
the spectral gap assumption, existing results on spectral functions~\cite{Lewis1996}, and the chain rule imply that the eigenvalue sum $g(\pmb{\mu})$ is differentiable,
with the gradient at $\pmb{\mu}^*$ given by

\begin{equation} \label{eq:gradient}
\nabla g(\pmb{\mu^*}) = \begin{bmatrix} \trace(\mathbf{X^*}^\top \mathbf{L}_1\mathbf{X}*)\\ \vdots \\ \trace(\mathbf{X^*}^\top \mathbf{L}_m\mathbf{X}^*)\end{bmatrix}.
\end{equation}
Because $g$ \revise{is concave} and $\Delta^{m-1}$ is convex, $\pmb{\mu}^* \in \Delta^{m-1}$ is a maximizer if and only if 
the gradient of $g$ is in the normal cone of $\Delta^{m-1}$ at $\pmb{\mu}^*$, that is,
\[\inner{\nabla g(\pmb{\mu^*})}{\pmb{\mu} -\pmb{\mu}^*} \leq 0, \quad \forall \pmb{\mu} \in \Delta^{m-1}.\]
By the linearity of $f$ with respect to $\mu$, this condition can be rewritten
as
\begin{equation} \label{eq:saddlepoint2}
 f(\pmb{\mu}, \mathbf{X}^*) \le f(\pmb{\mu}^*, \mathbf{X}^*).
\end{equation}

The two inequalities~\eqref{eq:saddlepoint1} and~\eqref{eq:saddlepoint2} show that $(\pmb{\mu}^*, \mathbf{X}^*)$ is a saddlepoint of $f$ and, in turn,~\eqref{eq:maxminaux} holds; see, e.g.,~\cite[Sec 4.3, Exercise 14]{Borwein00convexanalysis}.
   
\end{proof}

\revise{The uniform spectral-gap condition in Theorem~\ref{thm:main_thm} is a technical assumption used to ensure differentiability of the eigenvalue-sum objective in the above analysis, analogous to the eigenvalue simplicity assumption in Theorem~\ref{thm:coifman2023}. Isolated violations of the spectral gap are not expected to significantly affect the practical applicability of the RJD-BASE algorithm.}

The preceding result extends naturally to the case of $s_\mathcal{G}(\mathbf{X})$ measured in the $\ell_p$ norm. Specifically, for any $p>1$, we define
\[ 
s_\mathcal{G}^{(p)}(\mathbf{X}):=\| [ s_{\mathbf{L}_1}(\mathbf{X}),\ldots,s_{\mathbf{L}_m}(\mathbf{X}) ]\|_p.\]
 The following theorem generalizes Theorem~\ref{thm:main_thm} to this setting: finding minimal $s_\mathcal{G}^{(p)}$ is equivalent to maximizing the sum of the bottom-$k$ eigenvalues of $\mathbf{L}(\pmb{\mu})$ over $\pmb{\mu}$ in the unit $\ell_q$-ball,
\[\mathbf{B}_q := \{\mathbf{x} \in \mathbb{R}^m:\|\mathbf{x}\|_q=1\},\quad 1/p + 1/q = 1\] 
where $\ell_q$ is the dual norm of $\ell_p$.

\begin{theorem} For any $p > 1$, let $1/p + 1 /q = 1$. Assuming that $\lambda_{k}(\mathbf{L}(\pmb{\mu})) < \lambda_{k+1}(\mathbf{L}(\pmb{\mu}))$ holds for every $\pmb{\mu} \in \mathbf{B}_q $, we have that
   \begin{equation}
       \min_{\substack{\mathbf{X}^\top\mathbf{1} =\mathbf{0}\\\mathbf{X}^\top\mathbf{X}=\mathbf{I}_k}} s^{(p)}_\mathcal{G}(\mathbf{X}) = \max_{\pmb{\mu} \in \mathbf{B}_q}\sum_{i=1}^{k}\lambda_i(\mathbf{L}(\pmb{\mu})).
   \end{equation}
\end{theorem}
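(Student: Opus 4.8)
The plan is to imitate the saddle-point proof of Theorem~\ref{thm:main_thm}, with the simplex $\Delta^{m-1}$ replaced by a convex subset of the $\ell_q$-ball, together with one extra eigenvalue-monotonicity argument that reconciles that convex body with the non-convex sphere $\mathbf{B}_q$ appearing in the statement. Write $f(\pmb{\mu},\mathbf{X}):=\trace(\mathbf{X}^\top\mathbf{L}(\pmb{\mu})\mathbf{X})$ and $\mathbf{s}(\mathbf{X}):=[s_{\mathbf{L}_1}(\mathbf{X}),\ldots,s_{\mathbf{L}_m}(\mathbf{X})]^\top$, and note that $\mathbf{s}(\mathbf{X})\ge\mathbf{0}$ entrywise because each $\mathbf{L}_i$ is positive semidefinite. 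By duality of the $\ell_p$ and $\ell_q$ norms, $\|\mathbf{u}\|_p=\max_{\|\mathbf{v}\|_q\le1}\inner{\mathbf{u}}{\mathbf{v}}$, and the maximizer may be taken nonnegative when $\mathbf{u}\ge\mathbf{0}$; hence
\[ s^{(p)}_\mathcal{G}(\mathbf{X})\;=\;\max_{\pmb{\mu}\in C}\inner{\pmb{\mu}}{\mathbf{s}(\mathbf{X})}\;=\;\max_{\pmb{\mu}\in C}f(\pmb{\mu},\mathbf{X}),\qquad C:=\{\pmb{\mu}\in\reals^m:\pmb{\mu}\ge\mathbf{0},\ \|\pmb{\mu}\|_q\le1\}, \]
where $C$ is compact and \emph{convex}. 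Exactly as in the proof of Theorem~\ref{thm:main_thm}, the Ky Fan theorem gives $\min f(\pmb{\mu},\mathbf{X})=\sum_{i=1}^k\lambda_i(\mathbf{L}(\pmb{\mu}))=:g(\pmb{\mu})$ over the feasible set $\{\mathbf{X}:\mathbf{X}^\top\mathbf{1}=\mathbf{0},\,\mathbf{X}^\top\mathbf{X}=\mathbf{I}_k\}$, attained at a matrix $\mathbf{X}$ of eigenvectors for $\lambda_1,\ldots,\lambda_k$, so the left-hand side of the claim equals $\min_{\mathbf{X}}\max_{\pmb{\mu}\in C}f(\pmb{\mu},\mathbf{X})$.

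Next I would reduce the right-hand side of the claim to the same convex body, i.e.\ show $\max_{\pmb{\mu}\in\mathbf{B}_q}g(\pmb{\mu})=\max_{\pmb{\mu}\in C}g(\pmb{\mu})$. Here $g$ is positively homogeneous of degree one, and, since $\mathbf{L}(\pmb{\mu})\succeq\mathbf{0}$ whenever $\pmb{\mu}\ge\mathbf{0}$, it is nonnegative on the nonnegative orthant. Moreover, for any $\pmb{\mu}$ with positive part $\pmb{\mu}^+=\max(\pmb{\mu},\mathbf{0})$ one has $\mathbf{L}(\pmb{\mu}^+)-\mathbf{L}(\pmb{\mu})=\sum_i\mu_i^-\mathbf{L}_i\succeq\mathbf{0}$, so standard eigenvalue monotonicity under the Loewner order (see, e.g.,~\cite{matrixanalyis}) yields $g(\pmb{\mu})\le g(\pmb{\mu}^+)$; rescaling $\pmb{\mu}^+$ up onto the sphere does not decrease $g$ because $\|\pmb{\mu}^+\|_q\le\|\pmb{\mu}\|_q=1$. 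Thus every value of $g$ on $\mathbf{B}_q$ is dominated by a value on $\mathbf{B}_q\cap\reals^m_{\ge 0}\subseteq C$, while positive homogeneity gives the reverse, so both maxima equal $\max_{\pmb{\mu}\in C}g(\pmb{\mu})$. It therefore suffices to prove the minimax identity $\min_{\mathbf{X}}\max_{\pmb{\mu}\in C}f=\max_{\pmb{\mu}\in C}\min_{\mathbf{X}}f$, whose common value is then $\max_{\pmb{\mu}\in C}g=\max_{\pmb{\mu}\in\mathbf{B}_q}g$.

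For the minimax identity I would reproduce the saddle-point construction verbatim. Choose $\pmb{\mu}^*$ maximizing the concave function $g$ over the convex set $C$; by the previous paragraph it may be taken with $\|\pmb{\mu}^*\|_q=1$, so that $\pmb{\mu}^*\in\mathbf{B}_q$ and the spectral-gap hypothesis applies at $\pmb{\mu}^*$. Let $\mathbf{X}^*$ be an orthonormal basis of eigenvectors for $\lambda_1(\mathbf{L}(\pmb{\mu}^*)),\ldots,\lambda_k(\mathbf{L}(\pmb{\mu}^*))$. The Ky Fan characterization gives $f(\pmb{\mu}^*,\mathbf{X}^*)\le f(\pmb{\mu}^*,\mathbf{X})$ for all feasible $\mathbf{X}$. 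In the other direction, the spectral gap together with differentiability of eigenvalue sums~\cite{Lewis1996} yields $\nabla g(\pmb{\mu}^*)=\mathbf{s}(\mathbf{X}^*)$ as in~\eqref{eq:gradient}; since $g$ is concave and $C$ convex, optimality of $\pmb{\mu}^*$ is equivalent to $\inner{\nabla g(\pmb{\mu}^*)}{\pmb{\mu}-\pmb{\mu}^*}\le0$ for all $\pmb{\mu}\in C$, which by linearity of $f$ in $\pmb{\mu}$ reads $f(\pmb{\mu},\mathbf{X}^*)\le f(\pmb{\mu}^*,\mathbf{X}^*)$. Hence $(\pmb{\mu}^*,\mathbf{X}^*)$ is a saddle point of $f$, the minimax identity follows, and its value is $g(\pmb{\mu}^*)=\max_{\pmb{\mu}\in\mathbf{B}_q}\sum_{i=1}^k\lambda_i(\mathbf{L}(\pmb{\mu}))$, completing the proof.

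I expect the only genuinely new point relative to Theorem~\ref{thm:main_thm} to be this passage from the non-convex sphere $\mathbf{B}_q$ to the convex body $C$: the dual-norm rewriting that produced $\Delta^{m-1}$ when $p=\infty$ now only produces the curved surface $\mathbf{B}_q\cap\reals^m_{\ge 0}$, and it is the Loewner-monotonicity of $\sum_{i=1}^k\lambda_i$ under a positive semidefinite shift, combined with positive homogeneity, that lets us enlarge this to $C$ and thereby restore the convexity needed for the first-order optimality argument. All remaining ingredients---the dual-norm identity, Ky Fan, the gradient formula, and the saddle-point conclusion---carry over from the $p=\infty$ case without change.
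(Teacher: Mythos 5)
Your proof is correct and follows exactly the route the paper prescribes (the paper gives no separate argument, saying only that the proof ``follows the same lines as the proof of Theorem~\ref{thm:main_thm}''): dual-norm rewriting of $s^{(p)}_{\mathcal{G}}$, Ky~Fan for the inner minimum, and the saddle-point construction at a maximizer of $g$. The one genuinely new ingredient you add---passing between the non-convex sphere $\mathbf{B}_q$ and the convex body $\{\pmb{\mu}\ge\mathbf{0},\ \|\pmb{\mu}\|_q\le 1\}$ via Loewner monotonicity of $\sum_{i=1}^k\lambda_i$ and positive homogeneity of $g$, so that the first-order optimality condition is taken over a convex set while the spectral-gap hypothesis (stated only on $\mathbf{B}_q$) still applies at $\pmb{\mu}^*$---is precisely the detail the paper's one-line remark leaves implicit, and you handle it correctly.
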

The proof of this theorem follows the same lines as the proof of Theorem~\ref{thm:main_thm}.

By Theorem~\ref{thm:main_thm}, the optimal $\mathbf{X}^*$ is obtained from the eigenvectors of $\mathbf{L(\pmb{\mu}^*)}$
for the optimal weight vector~$\mathbf{\pmb{\mu}}^*$ that maximizes $g(\pmb{\mu})$. We will refer to this
objective function
\begin{equation}\label{eq:base_smoothness}
    g(\pmb{\mu}) = \sum_{j=1}^k \lambda_j(\mathbf{L(\pmb{\mu})})
\end{equation}
as the \textbf{BASE smoothness objective}.

\subsection{RJD with BASE Selection}

While direct optimization of the BASE smoothness objective is possible and explored in later experiments, each evaluation requires an eigendecomposition, which comes with significant cost and limiting scalability. Instead, we leverage this objective as a selection criterion.

Specifically, we propose a simple procedure that retains the efficiency and robustness of randomized methods like RJD while introducing a principled and task-aligned mechanism for embedding selection. The procedure, RJD-BASE, is detailed in Algorithm~\ref{rjdbase}.

\begin{algorithm}[H]
\caption{Randomized Joint Diagonalization with Bottom-$k$ Aggregated Spectral Energy Selection (RJD-BASE)}
\label{rjdbase}
\textbf{Input:} Family of graph Laplacians $\{\mathbf{L}_1, \ldots, \mathbf{L}_m\}$, number of trials $T$, embedding dimension $k$\\
\textbf{Output:} Spectral embedding \ensuremath{\mathbf{X} \in \mathbb{R}^{N \times k}}

{\textsc{RJD-BASE}}$(\{\mathbf{L}_i\}_{i=1}^m,\, T,\, k)$
\begin{algorithmic}[1]
\FOR{$t = 1$ to $T$ \textbf{(in parallel)}}
    \STATE Sample $\tilde{\mu}^{(t)}_i \sim \text{Uniform}(0,1)$ \textbf{for} $i = 1, \ldots, m$
    \STATE Normalize: $\mu^{(t)}_i \leftarrow \tilde{\mu}^{(t)}_i / \sum_j \tilde{\mu}^{(t)}_j$
    \STATE Form $\mathbf{L}^{(t)} \leftarrow \sum_{i=1}^m \mu^{(t)}_i \mathbf{L}_i$
    \STATE Compute $\mathbf{X}^{(t)}$ as bottom-$k$ eigenvectors of $\mathbf{L}^{(t)}$
    \STATE Compute objective \ensuremath{O^{(t)} \leftarrow \sum_{j=1}^{k} \lambda_j(\mathbf{L}^{(t)})}
\ENDFOR
\STATE Select $t^\ast \gets \arg\max_{t \in \{1,\ldots,T\}} O^{(t)}$ 
\RETURN $\mathbf{X} \leftarrow \mathbf{X}^{(t^*)}$
\end{algorithmic}
\end{algorithm}

\revise{For $N\times N$ Laplacians, each RJD-BASE trial requires forming a weighted combination of the $m$ input Laplacians and computing only its bottom-$k$ eigenspace using a partial eigensolver. For dense Laplacians, the total complexity over $T$ trials is
$$
O\left(T\left[mN^2+C_{\mathrm{eig}}^{\mathrm{dense}}(N,k)\right]\right),
$$
where $C_{\mathrm{eig}}^{\mathrm{dense}}(N,k)$ denotes the cost of computing the bottom-$k$ eigenpairs. For sparse Laplacians, the corresponding complexity is
$$
O\left(T\left[\sum_{i=1}^m\operatorname{nnz}(\mathbf{L}_i)+C_{\mathrm{eig}}^{\mathrm{sparse}}(N,k)\right]\right),
$$
where the partial eigensolver can exploit sparse matrix-vector operations. Moreover, the $T$ randomized trials are mutually independent and can therefore be executed in parallel, reducing wall-clock time when parallel computational resources are available.}

\noindent\textit{Note.} Lines 2–3 of Algorithm~\ref{rjdbase} sample from a distribution supported on the standard simplex $\Delta^{m-1}$, with mass concentrated near its center \cite{willms2021uniform, smith2004sampling}. \revise{As demonstrated in Section V.E, the choice between this distribution and a uniform distribution over the simplex has a negligible impact on final accuracy.}

\noindent\revise{\textit{Note.} We investigate the effect of $T$ on RJD-BASE in Section~V.\ref{subsec:T_sensitivity} and show that performance stabilizes for moderate values of $T$, indicating that large $T$ is not required for reliable accuracy.}}
  {\section{OUR FRAMEWORK}
\label{framework}

Our framework for multimodal spectral clustering departs from traditional full-spectrum alignment techniques, focusing directly on recovering the informative low-frequency components efficiently and accurately. For this purpose, it utilizes randomized sampling and selection based on $k$-dimensional spectral smoothness within the clustering-relevant subspace.


\subsection{Randomized Joint Diagonalization (RJD)}

Randomized Joint Diagonalization (RJD) \cite{he2024randomized} is a randomized method for approximately diagonalizing a family of symmetric matrices by constructing random linear combinations of input matrices and successively performing eigendecompositions to recover (approximate) common eigenvectors. For commuting matrices, RJD jointly diagonalizes each matrix with probability one. For nearly commuting matrices, RJD remains robust in the sense that, with high probability, it approximately diagonalizes each matrix with an error on the level of the input error. \revise{While these theoretical guarantees assume (near) commutativity, the graph Laplacians for multimodal data are not expected to satisfy this property; rather, this framework serves as a theoretical motivation for investigating linear combinations.}

The random (convex) combinations $\mathbf{L}(\pmb{\mu}) = \sum_{i=1}^m \mu_i \mathbf{L}_i$ used by RJD reflect randomized aggregations of modalities. The bottom-$k$ eigenvectors of $\mathbf{L}(\pmb{\mu})$ serve as an embedding $\mathbf{X} \in \mathbb{R}^{N \times k}$ used for downstream $k$-means clustering on its rows.  This approach is computationally efficient, requiring only partial eigendecompositions (avoids computing or optimizing a full joint diagonalizer), and scales well with the number of modalities. 

\subsection{Single-Directional Smoothness}

In \cite{coifman2023common}, a variational principle for selecting optimal convex combinations of graph Laplacians based on smoothness of functions on graphs has been established. 

For a graph $G$ with $N$ nodes, adjacency matrix $\mathbf{W}$ and  graph Laplacian $\mathbf{L}$,
the Rayleigh quotient
\[s_{\mathbf{L}}(\mathbf{x}) := \mathbf{x}^\top\mathbf{L}\mathbf{x} = \sum_{p\neq q}w_{pq}(x_p-x_q)^2\]
can be viewed as measuring the ``smoothness'' of a function with samples $\mathbf{x} \in \mathbb{R}^N$ on the $N$ nodes  
of the graph~\cite{coifman2023common}. In particular, large jumps across adjacent nodes get penalized.


For a family of connected graphs $\mathcal{G}=\{G_1,\ldots,G_m\}$ on the same $N$ nodes and with graph Laplacians $\mathbf{L}_1,\ldots,\mathbf{L}_m$, it has been proposed in~\cite{coifman2023common} to measure the smoothness of $\mathbf{x} \in \mathbb{R}^N$ over the nodes by the  worst-case smoothness, that is, 
\[s_{\mathcal{G}}(\mathbf{x}):= \max_{i=1,\ldots,m}s_{\mathbf{L}_i}(\mathbf{x}) = \|[ s_{\mathbf{L}_1}(\mathbf{x}),\ldots,s_{\mathbf{L}_m}(\mathbf{x})]\|_\infty\]
where $\|\cdot\|_\infty$ denotes the maximum norm. A key insight from~\cite{coifman2023common} is that the optimal $\mathbf{x} \in \reals^N$ (minimizing the worst-case smoothness) can be found by considering the second smallest eigenvalue $\lambda_1(\mathbf{L}(\pmb{\mu}))$ of linear combinations taking the form
\[\mathbf{L}(\pmb{\mu}):=\sum_{i=1}^m\mu_i\mathbf{L}_i, \quad \pmb{\mu} \in \Delta^{m-1},\]
where we recall that $\Delta^{m-1}$ denotes the standard $m$-dimensional simplex~\eqref{eq:simplex}.

\begin{theorem}[Theorem 2 in \cite{coifman2023common}]\label{thm:coifman2023}
   Assuming that $\lambda_1(\mathbf{L}(\pmb{\mu}))$ is a simple eigenvalue for every $\pmb{\mu} \in \Delta^{m-1}$, it holds that
   \[\min_{\substack{\mathbf{x}\top\mathbf{1} =0\\ \|\mathbf{x}\|_2 =1}} \max_{i=1,\ldots,m} \mathbf{x}^\top\mathbf{L}_i\mathbf{x} = \max_{\pmb{\mu} \in \Delta^{m=1}}\min_{\substack{\mathbf{x}^\top\mathbf{1} =0\\ \|\mathbf{x}\|_2 =1}} \mathbf{x}^\top\mathbf{L}(\pmb{\mu})\mathbf{x}.\]
\end{theorem}

Noting that the vector $\mathbf{1}$ is always an eigenvector belonging to the smallest eigenvalue $\lambda_0(\mathbf{L}(\pmb{\mu}))$, it follows that 
\[\min_{\substack{\mathbf{x}^\top\mathbf{1} =0\\ \|\mathbf{x}\|_2 =1}} \mathbf{x}^\top\mathbf{L}(\pmb{\mu})\mathbf{x}= \lambda_1(\mathbf{L}(\pmb{\mu})).\]
Thus, optimizing single-directional smoothness reduces to the eigenvalue optimization problem
\begin{equation}
\label{vector_vers}
\pmb{\mu}^* = \arg\max_{\pmb{\mu} \in \Delta^{m-1}} \lambda_1(\mathbf{L}(\pmb{\mu})).
\end{equation}
By Theorem~\ref{thm:coifman2023}, the optimal $\mathbf{x}$ is obtained as an eigenvector belonging to $\lambda_1(\mathbf{L}(\pmb{\mu^*}))$, which can serve as a one-dimensional embedding of the $N$ nodes across the whole family of graphs~\cite{coifman2023common}.

In the following, we will refer to the objective function $\lambda_1(\mathbf{L}(\pmb{\mu}))$ from~\eqref{vector_vers} as the \textbf{single-directional smoothness objective}.

\subsection{Bottom-$k$ Aggregated Spectral Energy (BASE) Smoothness}

We now aim at extending the concept of single-directional smoothness to suit the needs of spectral clustering, which requires a $k$-dimensional embedding matrix $\mathbf{X} \in \mathbb{R}^{N \times k}$ rather than a single vector.

Given a graph Laplacian $\mathbf{L}$ and a matrix $\mathbf{X} \in \mathbb{R}^{N \times k}$ with orthonormal columns (i.e., $\mathbf{X}^\top \mathbf{X} = \mathbf{I}_k$), we define the total smoothness of $\mathbf{X}$ with respect to $\mathbf{L}$ as

\begin{equation}\label{eq:S_L}
    s_{\mathbf{L}}(\mathbf{X}) = \trace(\mathbf{X}^\top \mathbf{L} \mathbf{X}) = \sum_{i=1}^k \mathbf{x}_i^\top \mathbf{L} \mathbf{x}_i.
\end{equation}
Each column $\mathbf{x}_i$ of $\mathbf{X}$ corresponds to a different direction in the embedding. 
Under the additional constraint $\mathbf{X}^\top\mathbf{1} = \mathbf{0}$, the matrix $\mathbf{X}$ formed by the bottom-$k$ eigenvectors of $\mathbf{L}$ minimizes~\eqref{eq:S_L}; see, e.g.,~\cite{matrixanalyis}.

In analogy to the single-vector case,
for a family of connected graphs $\mathcal{G}=\{G_1,\ldots,G_m\}$ with graph Laplacians $\mathbf{L}_1,\ldots,\mathbf{L}_m$
we measure the worst-case smoothness of an embedding $\mathbf{X} \in \mathbb{R}^{N \times k}$:
\begin{equation}
\label{eq:defsg}
s_\mathcal{G}(\mathbf{X}):=\| [ s_{\mathbf{L}_1}(\mathbf{X}),\ldots,s_{\mathbf{L}_m}(\mathbf{X}) ]\|_\infty. 
\end{equation}
The following result generalizes Theorem~\ref{thm:coifman2023} from one- to $k$-dimensional embeddings.

\begin{theorem}\label{thm:main_thm}
   Assuming that $\lambda_{k}(\mathbf{L}(\pmb{\mu})) < \lambda_{k+1}(\mathbf{L}(\pmb{\mu}))$ holds for every $\pmb{\mu} \in \Delta^{m-1}$, we have that
   \begin{equation}\label{eq:minimax_k}
       \min_{\substack{\mathbf{X}^\top\mathbf{1} =\mathbf{0}\\\mathbf{X}^\top\mathbf{X}=\mathbf{I}_k}} s_\mathcal{G}(\mathbf{X}) = \max_{\pmb{\mu} \in \Delta^{m-1}}\sum_{i=1}^{k}\lambda_i(\mathbf{L}(\pmb{\mu})).
   \end{equation}
\end{theorem}
\begin{proof}
Set $f(\pmb{\mu}, \mathbf{X} ):= \trace(\mathbf{X}^\top\mathbf{L}(\pmb{\mu})\mathbf{X})$.
Using dual norms,
$\|\mathbf u\|_\infty = \max\limits_{\|\mathbf v\|_1 =1} \inner{\mathbf u}{\mathbf v}$,
allows us to rewrite $s_\mathcal{G}(\mathbf{X})$ as
\begin{align*}
        s_\mathcal{G}(\mathbf{X}) &= \max_{\|\pmb{\mu}\|_1 =1 } \sum_{i = 1}^m \mu_i \trace(\mathbf{X}^\top\mathbf{L}_i\mathbf{X}) \\
        &= \max_{\pmb{\mu} \in \Delta^{m-1}} \sum_{i = 1}^m \mu_i \trace(\mathbf{X}^\top\mathbf{L}_i\mathbf{X}) = \max_{\pmb{\mu} \in \Delta^{m-1}} f(\pmb{\mu}, \mathbf{X}),
\end{align*}
where the second equality follows from the fact that $\trace(\mathbf{X}^\top\mathbf{L}_i\mathbf{X})$ is non-negative. On the other hand, the classical Ky-Fan theorem~\cite{stewardsun,matrixanalyis} implies that
\begin{equation}
 \label{eq:kyfan}
 \min_{\substack{\mathbf{X}^\top\mathbf{1} =\mathbf{0}\\\mathbf{X}^\top\mathbf{X}=\mathbf{I}_k}} f(\pmb{\mu}, \mathbf{X} ) =\sum_{i=1}^{k}\lambda_i(\mathbf{L}(\pmb{\mu}))
 =: g(\pmb{\mu}),
\end{equation}
where the minimum is assumed by the matrix
$\mathbf{X}$ containing an orthonormal basis of eigenvectors for $\lambda_1(\mathbf{L}(\pmb{\mu})),\ldots,\lambda_k(\mathbf{L}(\pmb{\mu}))$. \revise{Since $f(\pmb{\mu},\mathbf{X})$ is affine in $\pmb{\mu}$ for fixed $\mathbf{X}$, this variational characterization also shows that $g(\pmb{\mu})$, as the pointwise minimum of affine functions, is concave in $\pmb{\mu}$.}
In summary,~\eqref{eq:minimax_k} is equivalent to establishing
\begin{equation} \label{eq:maxminaux}
 \min_{\substack{\mathbf{X}^\top\mathbf{1} =\mathbf{0}\\\mathbf{X}^\top\mathbf{X}=\mathbf{I}_k}} \max_{\pmb{\mu} \in \Delta^{m-1}} f(\pmb{\mu}, \mathbf{X} ) = 
 \max_{\pmb{\mu} \in \Delta^{m-1}}\min_{\substack{\mathbf{X}^\top\mathbf{1} =\mathbf{0}\\\mathbf{X}^\top\mathbf{X}=\mathbf{I}_k}} f(\pmb{\mu}, \mathbf{X} ).
\end{equation}

To prove~\eqref{eq:maxminaux}, choose $\pmb{\mu}^* \in \Delta^{m-1}$ that maximizes the eigenvalue sum $g(\pmb{\mu})$ from~\eqref{eq:kyfan}. Letting
$\mathbf{X}^*$ denote the corresponding
orthonormal basis of eigenvectors, we clearly have that
\begin{equation} \label{eq:saddlepoint1}
 f(\pmb{\mu}^*, \mathbf{X}^*) \le f(\pmb{\mu}^*, \mathbf{X})
\end{equation}
for all feasible $\mathbf{X}$. On the other hand, 
the spectral gap assumption, existing results on spectral functions~\cite{Lewis1996}, and the chain rule imply that the eigenvalue sum $g(\pmb{\mu})$ is differentiable,
with the gradient at $\pmb{\mu}^*$ given by
\begin{equation} \label{eq:gradient}
\nabla g(\pmb{\mu^*}) = \begin{bmatrix} \trace(\mathbf{X^*}^\top \mathbf{L}_1\mathbf{X}*)\\ \vdots \\ \trace(\mathbf{X^*}^\top \mathbf{L}_m\mathbf{X}^*)\end{bmatrix}.
\end{equation}
Because $g$ \revise{is concave} and $\Delta^{m-1}$ is convex, $\pmb{\mu}^* \in \Delta^{m-1}$ is a maximizer if and only if 
the gradient of $g$ is in the normal cone of $\Delta^{m-1}$ at $\pmb{\mu}^*$, that is,
\[\inner{\nabla g(\pmb{\mu^*})}{\pmb{\mu} -\pmb{\mu}^*} \leq 0, \quad \forall \pmb{\mu} \in \Delta^{m-1}.\]
By the linearity of $f$ with respect to $\mu$, this condition can be rewritten
as
\begin{equation} \label{eq:saddlepoint2}
 f(\pmb{\mu}, \mathbf{X}^*) \le f(\pmb{\mu}^*, \mathbf{X}^*).
\end{equation}

The two inequalities~\eqref{eq:saddlepoint1} and~\eqref{eq:saddlepoint2} show that $(\pmb{\mu}^*, \mathbf{X}^*)$ is a saddlepoint of $f$ and, in turn,~\eqref{eq:maxminaux} holds; see, e.g.,~\cite[Sec 4.3, Exercise 14]{Borwein00convexanalysis}.
\end{proof}
\noindent\textit{Note.} For a more general formulation of Theorem~\ref{thm:main_thm} that allows for a general
$\ell_p$ norm, $p>1$, instead of the maximum norm in~\eqref{eq:defsg}, we refer the reader to the extended version~\cite[Theorem 3]{arxiv_version}.

By Theorem~\ref{thm:main_thm}, the optimal $\mathbf{X}^*$ is obtained from the eigenvectors of $\mathbf{L(\pmb{\mu}^*)}$
for the optimal weight vector~$\mathbf{\pmb{\mu}}^*$ that maximizes $g(\pmb{\mu})$. We will refer to this
objective function
\begin{equation}\label{eq:base_smoothness}
    g(\pmb{\mu}) = \sum_{j=1}^k \lambda_j(\mathbf{L(\pmb{\mu})})
\end{equation}
as the \textbf{BASE smoothness objective}. 

\subsection{RJD with BASE Selection}

While direct optimization of the BASE smoothness objective is possible and explored in later experiments, each evaluation requires an eigendecomposition, which comes with significant cost and limiting scalability. Instead, we leverage this objective as a selection criterion.

Specifically, we propose a simple procedure that retains the efficiency and robustness of randomized methods like RJD while introducing a principled and task-aligned mechanism for embedding selection. The procedure, RJD-BASE, is detailed in Algorithm~\ref{rjdbase}.

\begin{algorithm}[H]
\caption{Randomized Joint Diagonalization with Bottom-$k$ Aggregated Spectral Energy Selection (RJD-BASE)}
\label{rjdbase}
\textbf{Input:} Family of graph Laplacians $\{\mathbf{L}_1, \ldots, \mathbf{L}_m\}$, number of trials $T$, embedding dimension $k$\\
\textbf{Output:} Spectral embedding \ensuremath{\mathbf{X} \in \mathbb{R}^{N \times k}}

{\textsc{RJD-BASE}}$(\{\mathbf{L}_i\}_{i=1}^m,\, T,\, k)$
\begin{algorithmic}[1]
\FOR{$t = 1$ to $T$ \textbf{(in parallel)}}
    \STATE Sample $\tilde{\mu}^{(t)}_i \sim \text{Uniform}(0,1)$ \textbf{for} $i = 1, \ldots, m$
    \STATE Normalize: $\mu^{(t)}_i \leftarrow \tilde{\mu}^{(t)}_i / \sum_j \tilde{\mu}^{(t)}_j$
    \STATE Form $\mathbf{L}^{(t)} \leftarrow \sum_{i=1}^m \mu^{(t)}_i \mathbf{L}_i$
    \STATE Compute $\mathbf{X}^{(t)}$ as bottom-$k$ eigenvectors of $\mathbf{L}^{(t)}$
    \STATE Compute objective \ensuremath{O^{(t)} \leftarrow \sum_{j=1}^{k} \lambda_j(\mathbf{L}^{(t)})}
\ENDFOR
\STATE Select $t^\ast \gets \arg\max_{t \in \{1,\ldots,T\}} O^{(t)}$ 
\RETURN $\mathbf{X} \leftarrow \mathbf{X}^{(t^*)}$
\end{algorithmic}
\end{algorithm}

\noindent\textit{Note.} Lines 2–3 of Algorithm~\ref{rjdbase} sample from a distribution supported on the standard simplex $\Delta^{m-1}$, with mass concentrated near its center \cite{willms2021uniform, smith2004sampling}. \revise{As demonstrated in Section~V.E of \cite{arxiv_version}, the choice between this distribution and a uniform distribution over the simplex has a negligible impact on final accuracy.}

\noindent\revise{\textit{Note.} We investigate the effect of $T$ on RJD-BASE in Section~V.F of \cite{arxiv_version} and show that performance stabilizes for moderate values of $T$, indicating that large $T$ is not required for reliable accuracy.}}
\ifthenelse{\boolean{arxiv}}
  {\section{DATASETS}
\label{datasets}

To verify Algorithm~\ref{rjdbase}, we have performed experiments for both, synthetic and real-world datasets. Depending on the setting, we either: (i) construct synthetic graph modalities ourselves and then generate the corresponding graph Laplacians or (ii) begin with modality-specific feature matrices which we model as graphs and compute graph Laplacians from them.

\subsection{Synthetic Weighted SBM}

We construct a synthetic multimodal dataset based on a weighted Stochastic Block Model (SBM) \cite{ng2021weighted}. The graph consists of \( N \) nodes partitioned into \( k \) ground-truth clusters. Cluster sizes are imbalanced, drawn from a Dirichlet distribution with uniform concentration, and the resulting cluster labels \( \mathbf{Y} \in \{1, \ldots, k\}^N \) are randomly permuted.

Each modality \( i \in \{1, \ldots, m\} \) is defined by a unique combination of:
\begin{itemize}
    \item A node-level real-valued feature vector \( \mathbf{x}^{(i)} \in \mathbb{R}^N \), with entries sampled i.i.d. from \( \mathcal{N}(0, 1) \)
    \item A symmetric block probability matrix \( \mathbf{B}^{(i)} \in \mathbb{R}^{k \times k} \) specifying relative edge strength between clusters
\end{itemize}

To simulate \textit{complementary} and \textit{partially informative} views, we define the block matrices per modality with \(k=6\) and \(m=4\) as follows:

\begin{itemize}
    \item Modality 1: Strong intra-cluster structure for clusters 1-3 and weaker structure for clusters 4-6:
    \[
    \mathbf{B}^{(1)} = \text{diag}(\underbrace{\alpha,\alpha,\alpha}_{\text{clusters 1--3}}, \underbrace{\beta,\beta,\beta}_{\text{clusters 4--6}}) + \varepsilon.
    \]
    \item Modality 2: Strong intra-cluster structure for clusters 4-6 and weaker structure for clusters 1-3:
    \[
    \mathbf{B}^{(2)} = \text{diag}(\underbrace{\zeta,\zeta,\zeta}_{\text{clusters 1--3}}, \underbrace{\xi,\xi,\xi}_{\text{clusters 4--6}}) + \eta.
    \]
    \item Modality 3: Overall poor clustering signal:
    \[
    \mathbf{B}^{(3)} = \gamma \cdot \mathbf{1}_{k \times k} + \chi.
    \]
    \item Modality 4: Moderate intra- and inter-cluster structure across all clusters:
    \[
    \mathbf{B}^{(4)} = \theta \cdot \mathbf{I}_k + \delta \cdot (\mathbf{1}_{k \times k} - \mathbf{I}_k).
    \]
\end{itemize}

Given features \( \mathbf{x}^{(i)} \), we define a similarity matrix via the radial basis function (RBF) kernel \cite{izquierdo2015spectral}:
\[
\mathbf{S}^{(i)}_{pq} = \exp\left( -\frac{(\mathbf{x}^{(i)}_p - \mathbf{x}^{(i)}_q)^2}{2\sigma^2} \right),
\]
where \( \sigma > 0 \) is a fixed kernel width. To inject cluster structure, we define a weight mask matrix \( \mathbf{C}^{(i)} \in \mathbb{R}^{N \times N} \) using the modality-specific block matrix \( \mathbf{B}^{(i)} \) and the cluster labels:
\[
\mathbf{C}^{(i)}_{pq} = \mathbf{B}^{(i)}_{Y_p, Y_q}.
\]

That is, for each node pair $(p,q)$, we look up the cluster memberships of the nodes (denoted $\mathbf{Y}_p$ and $\mathbf{Y}_q$) and note the proper edge weight as defined by the modality.

The final weighted adjacency matrix is then:
\[
\mathbf{W}^{(i)} = \mathbf{S}^{(i)} \circ \mathbf{C}^{(i)},
\]
where \( \circ \) denotes the element-wise product. We set the diagonal elements of \( \mathbf{W}^{(m)} \) to $0$ to enforce the absence of self-loops and finally compute the symmetric normalized Laplacian.

\noindent
The specific parameter values used in our experiments are:
\begin{align*}
N &= 300, \quad k = 6, \quad m = 4, \\
\sigma &= 1 \quad (\text{except } \sigma = 10^6 \text{ for Modality 3}), \\
\alpha &= \xi = 0.9, \quad \beta = \zeta = 0.05, \quad \gamma = 0.06, \\
\theta &= 0.7, \quad \delta = 0.2, \quad \varepsilon = \eta = \chi = 0.005~.
\end{align*}

This construction promotes the idea that no single modality fully resolves the clustering structure. Instead, each emphasizes different portions of the cluster space such that jointly, the modalities offer a richer and more complete view of the latent structure. Thus, this constructions yields a representative testbed for evaluating multimodal joint diagonalization methods. 

Figure \ref{heatmaps} provides an intuitive illustration of the generated graphs. We plot heatmaps of the adjacency matrices $\mathbf{W}^{(i)}$ for each modality, with nodes ordered by their ground-truth cluster assignments. In these visualizations, bright diagonal blocks correspond to strong intra-cluster connectivity, while darker off-diagonal regions indicate weaker inter-cluster connections. The qualitative differences across modalities are immediately visible: some views display sharp, high-contrast blocks for a subset of clusters, while others exhibit more moderate or noisy structure. 

\begin{figure}[h]
    \centering

    \begin{minipage}[c]{0.49\linewidth}
        \centering
        \includegraphics[width=\linewidth]{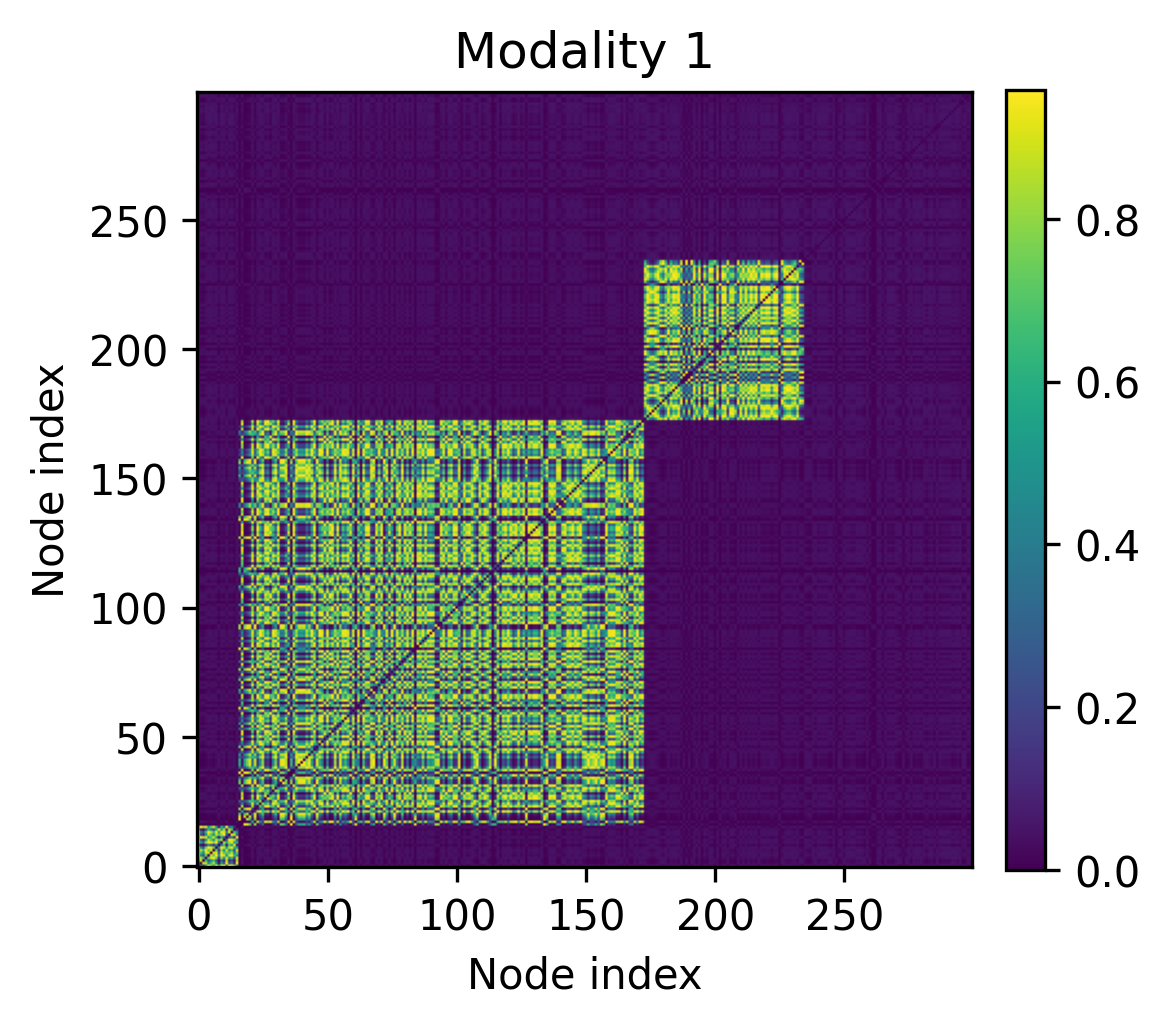}
        \vspace{2pt}
        {\footnotesize (a) Modality 1}
    \end{minipage}\hfill
    \begin{minipage}[c]{0.49\linewidth}
        \centering
        \includegraphics[width=\linewidth]{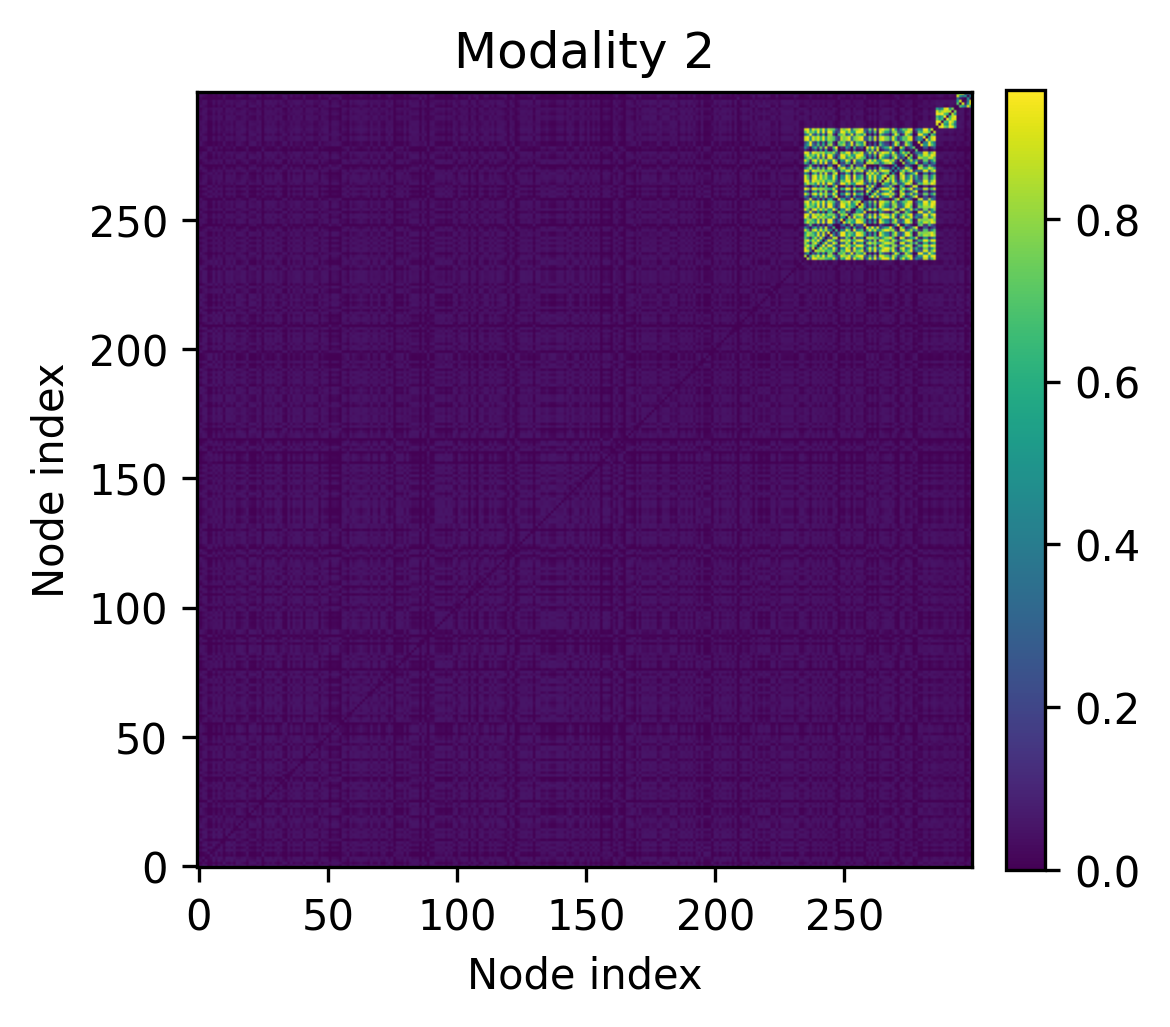}
        \vspace{2pt}
        {\footnotesize (b) Modality 2}
    \end{minipage}

    \vspace{0.5em}

    \begin{minipage}[c]{0.49\linewidth}
        \centering
        \includegraphics[width=\linewidth]{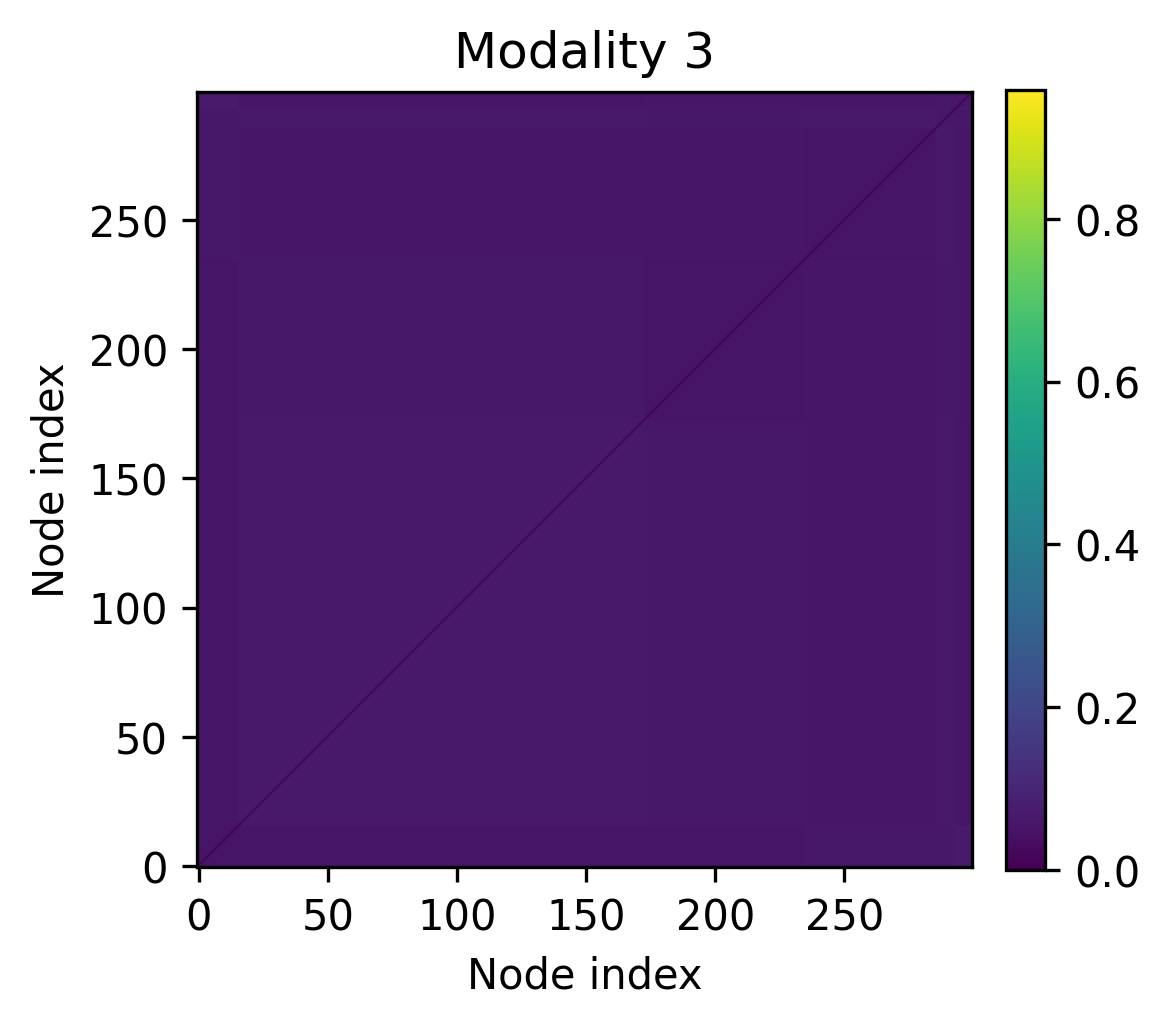}
        \vspace{2pt}
        {\footnotesize (c) Modality 3}
    \end{minipage}\hfill
    \begin{minipage}[c]{0.49\linewidth}
        \centering
        \includegraphics[width=\linewidth]{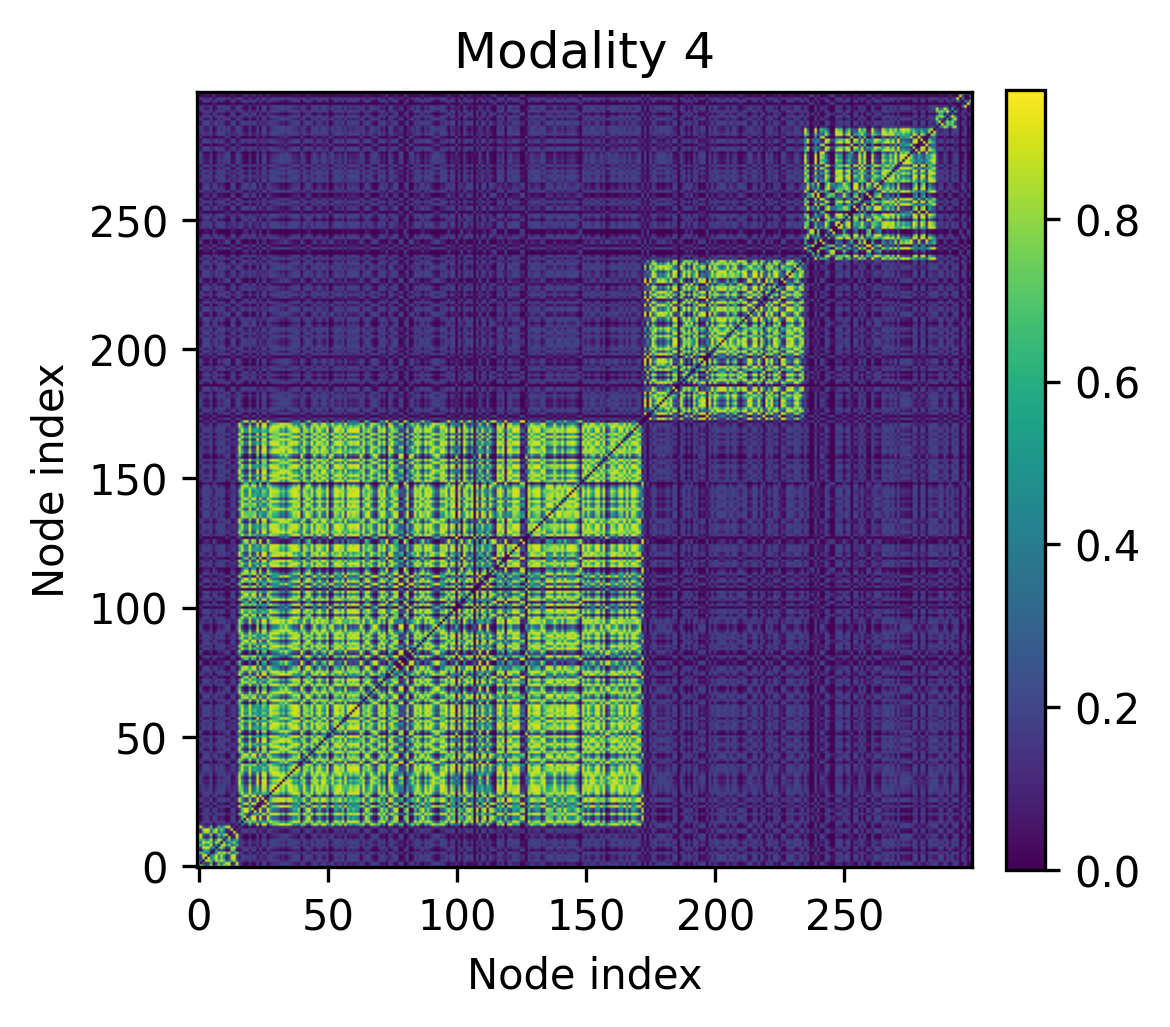}
        \vspace{2pt}
        {\footnotesize (d) Modality 4}
    \end{minipage}

    \caption{\footnotesize Adjacency matrix heatmaps for the $m=4$ modalities in the weighted SBM dataset, with nodes sorted by ground-truth cluster. Color scale shows edge weights (RBF similarity $\times$ block strength).}
    \label{heatmaps}
\end{figure}

\bigskip

The subsequent datasets utilize real world multimodal data in the form of feature matrices $\mathbf{Z}^{(i)} \in \mathbb{R}^{N \times d_i}$, where $N$ is the number of samples and $d_i$ is the number of features in modality $i$.
We define the affinity matrix $\mathbf{W}^{(i)} \in \mathbb{R}^{N \times N}$ using a self-tuning Gaussian kernel \cite{zelnik2004self} in terms of its entries as in \cite{7053905, nataliani}:
\[
w^{(i)}_{pq} = \begin{cases}
\exp\left(-\dfrac{\|\mathbf{z}^{(i)}_p - \mathbf{z}^{(i)}_q\|^2}{\sigma_p \sigma_q}\right)~, & p \ne q \\
0~, & p = q.
\end{cases}
\]
where $\mathbf{z}^{(i)}_p$ is the $p$th column of $\mathbf{Z}^{(i)}$ and $\sigma_p$ is a local bandwidth parameter for each sample defined as the distance to its $k$-th nearest neighbor. This results in a fully connected, symmetric graph with adaptive Gaussian weights and zero diagonal. Given $\mathbf{W}^{(i)}$, we construct the corresponding symmetric normalized Laplacian as before.

\subsection{Caltech-7}

We consider a multimodal subset of the Caltech-101 image dataset, commonly referred to as \textit{Caltech-7} \cite{5995740,li_andreeto_ranzato_perona_2022, li2015large}. This benchmark consists of 1,474 images across 7 categories:
\texttt{dollar\_bill}, \texttt{snoopy}, \texttt{windsor\_chair}, \texttt{stop\_sign}, \texttt{Motorbikes}, \texttt{garfield}, and \texttt{Faces}.
Each image is represented in six distinct feature modalities, yielding a total of six data views per sample:

\begin{itemize}
    \item Gabor (48 dimensions)
    \item Wavelet Moments (40 dimensions)
    \item CENTRIST (254 dimensions)
    \item Histogram of Oriented Gradients (HOG) (1984 dimensions)
    \item GIST (512 dimensions)
    \item Local Binary Patterns (LBP) (928 dimensions)
\end{itemize}

These feature vectors are treated independently as six modalities.

\subsection{Digits}
\label{subsec:digit}
We also consider a multimodal version of the UCI Optical Recognition of Handwritten Digits dataset \cite{liu2013multi}, commonly referred to as \textit{Digits}. This benchmark consists of 3,823 grayscale images of handwritten digits (0 through 9), each represented as an $8 \times 8$ pixel grid. The dataset comprises 10 classes corresponding to digit labels.

To enable multimodal analysis, we extract two distinct feature representations for each image sample:

\begin{itemize}
    \item DCT (76 dimensions): the top 76 coefficients from the 2D Discrete Cosine Transform of the image, capturing global frequency structure.
    \item Patch Averages (240 dimensions): average pixel intensities computed over a grid of $2 \times 3$ patches, capturing coarse spatial information.
\end{itemize}

These feature vectors are treated as complementary modalities.

\subsection{Nutrimouse}

\revise{We consider the \textit{Nutrimouse} dataset \cite{nutrimouse}, which arises from a nutrigenomics study of mice and is commonly used as a benchmark for multiview learning \cite{parea}. It contains $N=40$ mouse samples, each with two modalities: gene expression data and fatty acid concentration profiles. Each mouse is labeled according to one of $k=5$ diet categories.}

\subsection{MSRC}

\revise{We construct a multimodal image dataset derived from the MSRC Object Recognition Image Database (MSRC-ORID) \cite{msrc}. We use $k=7$ object categories and sample an equal number of images per category, resulting in a total of $N=210$ images. We represent each image with four common visual feature modalities: local binary patterns (LBP) to capture texture information, downsampled grayscale pixel intensities to capture coarse global structure, Gabor filter energy features to encode oriented texture responses, and edge orientation histograms to represent shape and contour information.}

\subsection{Nonlinear Gaussian Mixture (NGM)}

\revise{We generate a synthetic two-modality dataset using the \texttt{make\_gaussian\_mixture} generator from the \texttt{mvlearn} library \cite{perry2021mvlearn}. Following the construction described in \cite{perry2021mvlearn}, latent variables are sampled from a mixture of two multivariate Gaussian distributions with equal prior probability; a nonlinear transformation is applied to produce a second view; and independent noise dimensions are added to each view. We set $N=5000$ samples and $k=2$ clusters, with latent Gaussian centers at $(0,2)$ and $(0,-2)$ and identity covariances. We use \texttt{noise\_dims}$=2$, apply an element-wise $\tanh(\cdot)$ transformation to generate the second view, and standardize prior to Laplacian construction.}
}
 {\section{DATASETS}
\label{datasets}

To verify Algorithm~\ref{rjdbase}, we have performed experiments for both, synthetic and real-world datasets. Depending on the setting, we either: (i) construct synthetic graph modalities ourselves and then generate the corresponding graph Laplacians or (ii) begin with modality-specific feature matrices which we model as graphs and compute graph Laplacians from them. 

For more detailed construction details and specifications of each dataset; see~\cite{arxiv_version}.

\subsection{Synthetic Weighted SBM}
We construct a controlled, synthetic multimodal dataset  based on a weighted Stochastic Block Model (SBM) \cite{ng2021weighted}. We use \(N=300\) nodes partitioned into \(k=6\) clusters (imbalanced, drawn from a Dirichlet distribution with uniform concentration).

For each of the \(m=4\) modalities, we (i) generate 1D features and turn them into a soft similarity with a Gaussian radial basis function (RBF) kernel \cite{izquierdo2015spectral}, (ii) specify a simple \(k\times k\) block pattern \(\mathbf{B}^{(i)}\) that states intra-cluster connections, and (iii) combine the two by multiplying the similarity entry for a node pair with the corresponding weight. We then zero the self-edges and use the symmetric normalized Laplacian downstream.

To simulate \textit{complementary} and \textit{partially informative} views, modality 1 makes clusters \(1\text{–}3\) strong and \(4\text{–}6\) weak, modality 2 other does the reverse, modality 3 intentionally provides almost no cluster signal (near-constant similarity via a very large kernel bandwidth), and modality 4 is moderate across all clusters. 
Figure~\ref{heatmaps} provides an intuitive illustration of the generated graphs.

\begin{figure}[h]
    \centering

    \begin{minipage}[c]{0.49\linewidth}
        \centering
        \includegraphics[width=0.8\linewidth]{pados1a.png}
        \vspace{2pt}
        {\footnotesize \\(a) Modality 1}
    \end{minipage}\hfill
    \begin{minipage}[c]{0.49\linewidth}
        \centering
        \includegraphics[width=0.8\linewidth]{pados1b.png}
        \vspace{2pt}
        {\footnotesize \\(b) Modality 2}
    \end{minipage}

    \vspace{0.5em}

    \begin{minipage}[c]{0.49\linewidth}
        \centering
        \includegraphics[width=0.8\linewidth]{pados1c.png}
        \vspace{2pt}
        {\footnotesize \\(c) Modality 3}
    \end{minipage}\hfill
    \begin{minipage}[c]{0.49\linewidth}
        \centering
        \includegraphics[width=0.8\linewidth]{pados1d.png}
        \vspace{2pt}
        {\footnotesize \\(d) Modality 4}
    \end{minipage}

    \caption{\footnotesize Adjacency matrix heatmaps for the $m=4$ modalities in the weighted SBM dataset. Nodes ordered by their ground-truth cluster assignments. Bright diagonal blocks correspond to strong intra-cluster connectivity, while darker off-diagonal regions indicate weaker inter-cluster connections.strength).}
    \label{heatmaps}
\end{figure}


The subsequent datasets utilize real world multimodal data in the form of feature matrices $\mathbf{Z}^{(i)} \in \mathbb{R}^{N \times d_i}$, where $N$ is the number of samples and $d_i$ is the number of features in modality $i$.
We define the affinity matrix $\mathbf{W}^{(i)} \in \mathbb{R}^{N \times N}$ using a self-tuning Gaussian kernel \cite{zelnik2004self} in terms of its entries as in \cite{7053905, nataliani}:
\[
w^{(i)}_{pq} = \begin{cases}
\exp\left(-\dfrac{\|\mathbf{z}^{(i)}_p - \mathbf{z}^{(i)}_q\|^2}{\sigma_p \sigma_q}\right)~, & p \ne q \\
0~, & p = q.
\end{cases}
\]
where $\mathbf{z}^{(i)}_p$ is the $p$th column of $\mathbf{Z}^{(i)}$ and $\sigma_p$ is a local bandwidth parameter for each sample defined as the distance to its $k$-th nearest neighbor. This results in a fully connected, symmetric graph with adaptive Gaussian weights and zero diagonal. Given $\mathbf{W}^{(i)}$, we construct the corresponding symmetric normalized Laplacian as before.

\subsection{Caltech-7}

We consider a multimodal subset of the Caltech-101 image dataset, commonly referred to as \textit{Caltech-7} \cite{5995740,li_andreeto_ranzato_perona_2022, li2015large}. This benchmark consists of 1,474 images across 7 ground-truth categories.
Each image is represented in six distinct image feature modalities.

\subsection{Digits}

We consider a multimodal version of the UCI Optical Recognition of Handwritten Digits dataset \cite{liu2013multi}, commonly referred to as \textit{Digits}. This dataset consists of 3,823 grayscale images of of 10 classes corresponding to handwritten digits 0 through 9. We manually extract two distinct feature representations for each image sample.

\subsection{Nutrimouse}

\revise{We consider the \textit{Nutrimouse} dataset \cite{nutrimouse}, which arises from a nutrigenomics study of mice and is commonly used as a benchmark for multiview learning \cite{parea}. It contains $N=40$ mouse samples, each with two modalities: gene expression data and fatty acid concentration profiles. Each mouse is labeled according to one of $k=5$ diet categories.}

\subsection{MSRC}

\revise{We construct a multimodal image dataset derived from the MSRC Object Recognition Image Database (MSRC-ORID) \cite{msrc}. We use $k=7$ object categories and sample an equal number of images per category, resulting in a total of $N=210$ images. We represent each image with four common visual feature modalities.}

\subsection{Nonlinear Gaussian Mixture (NGM)}

\revise{We generate a synthetic two-modality dataset using the \texttt{make\_gaussian\_mixture} generator from the \texttt{mvlearn} library \cite{perry2021mvlearn}. We follow the construction described in \cite{perry2021mvlearn} and set $N=5000$ samples and $k=2$ clusters.}

}
\ifthenelse{\boolean{arxiv}}
  {\section{NUMERICAL EXPERIMENTS}
\label{exp}

Following standard practice, we evaluate clustering quality with normalized mutual information (NMI), which rescales mutual information by the label entropies so that scores lie in $[0,1]$ ($1$ being perfect agreement and $0$ being independence) \cite{scikit-learn}.

\subsection{Direct Optimization of Smoothness Objectives}
\label{direct_opt}

Our aim is to test whether optimizing for single-directional smoothness or BASE smoothness produces better clustering embeddings.
We apply projected gradient ascent over the standard simplex $\Delta^{m-1}$ (the space of valid weight vectors). That is, each update takes a step in the direction of the gradient to increase the objective, followed by a projection back onto the feasible set to maintain constraints \cite{manopt}. We initialize with uniform weights, at each iteration compute either $\lambda_1(\mathbf{L(\pmb{\mu})})$ (for the single-directional smoothness formulation) or $\sum_{j=1}^{k} \lambda_j(\mathbf{L(\pmb{\mu})})$ (for the BASE smoothness formulation) on the convex combination $\mathbf{L(\pmb{\mu})}$, take a step along the gradient of the objective with respect to $\pmb{\mu}$, and  project back onto the simplex via Euclidean projection.
We extract the bottom $k$ eigenvectors of $\mathbf{L(\pmb{\mu}^*)}$ and perform $k$-means clustering on the resulting embedding at each step, plotting the normalized mutual information (NMI) vs. the smoothness objective.

\revise{Figures~\ref{fig:vector_synth} -~\ref{fig:vector_mix}} show the clustering performance (NMI) as a function of the single-directional smoothness objective $\lambda_1(\mathbf{L(\pmb{\mu})})$ on \revise{all six of our datasets.} \revise{The final NMIs of the direct optimization of single-directional smoothness on each of the datasets can be seen in the figures of in Table~\ref{tab:all_methods_nmi}.}

\begin{figure}[H]
    \centering
    \includegraphics[width=3.5in]{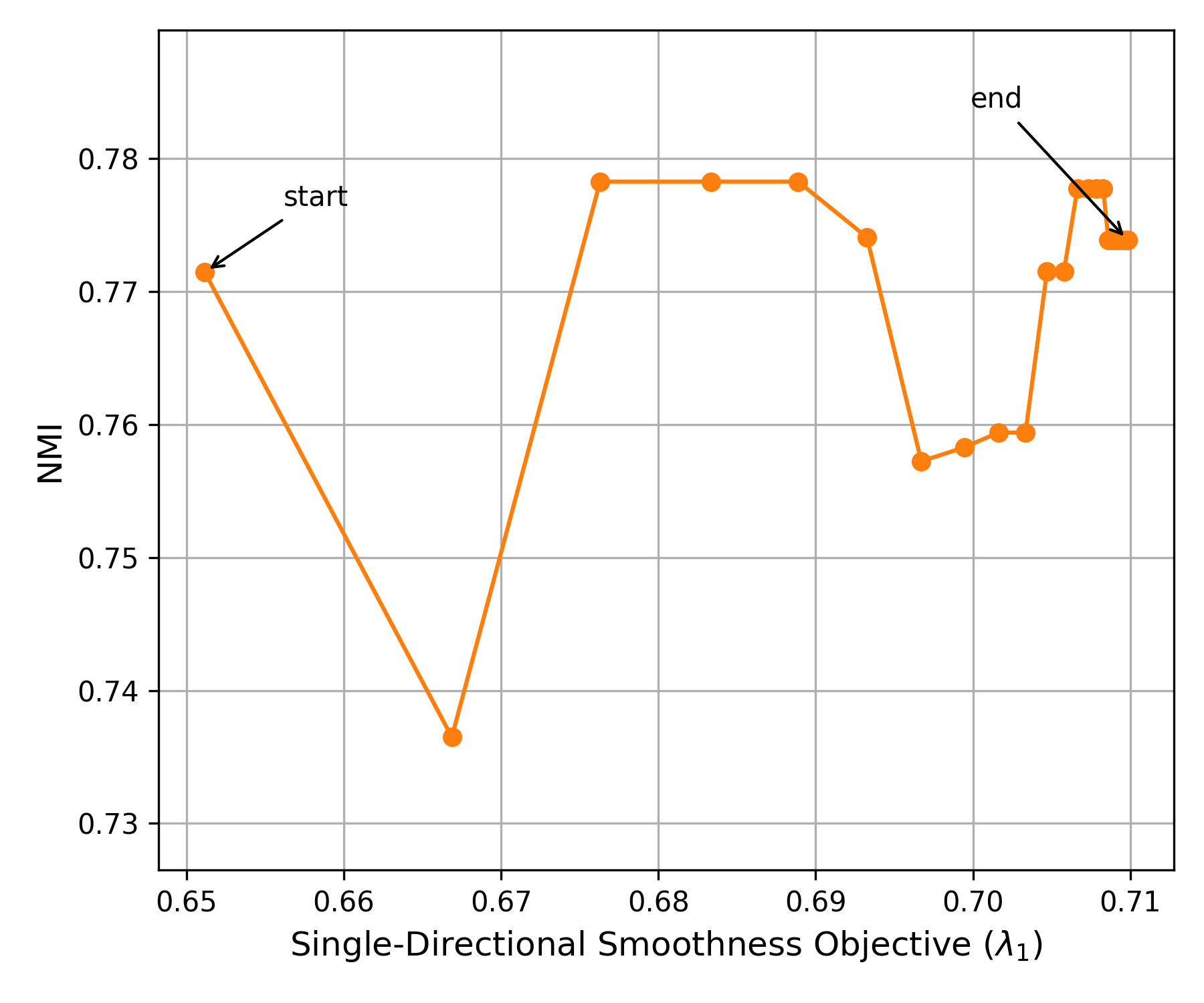}
    \caption{30-iteration direct optimization of single-directional smoothness objective on SBM dataset: NMI vs. second smallest eigenvalue $\lambda_1(\mathbf{L(\pmb{\mu})})$.}
    \label{fig:vector_synth}
\end{figure}

\begin{figure}[H]
    \centering
    \includegraphics[width=3.5in]{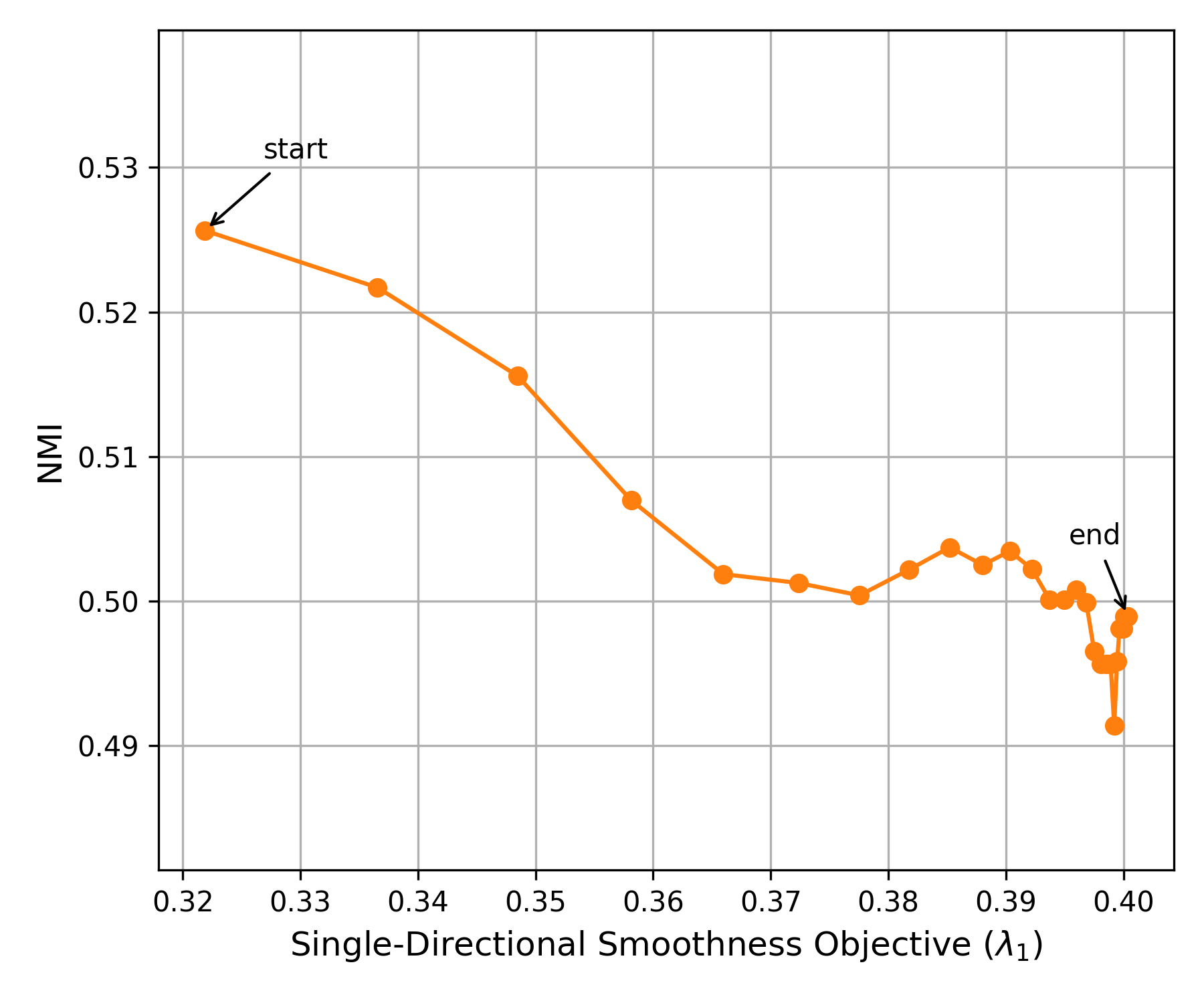}
    \caption{30-iteration direct optimization of single-directional smoothness objective on Caltech-7 dataset: NMI vs. second smallest eigenvalue $\lambda_1(\mathbf{L(\pmb{\mu})})$.}
    \label{fig:vector_cal}
\end{figure}

\begin{figure}[H]
    \centering
    \includegraphics[width=3.5in]{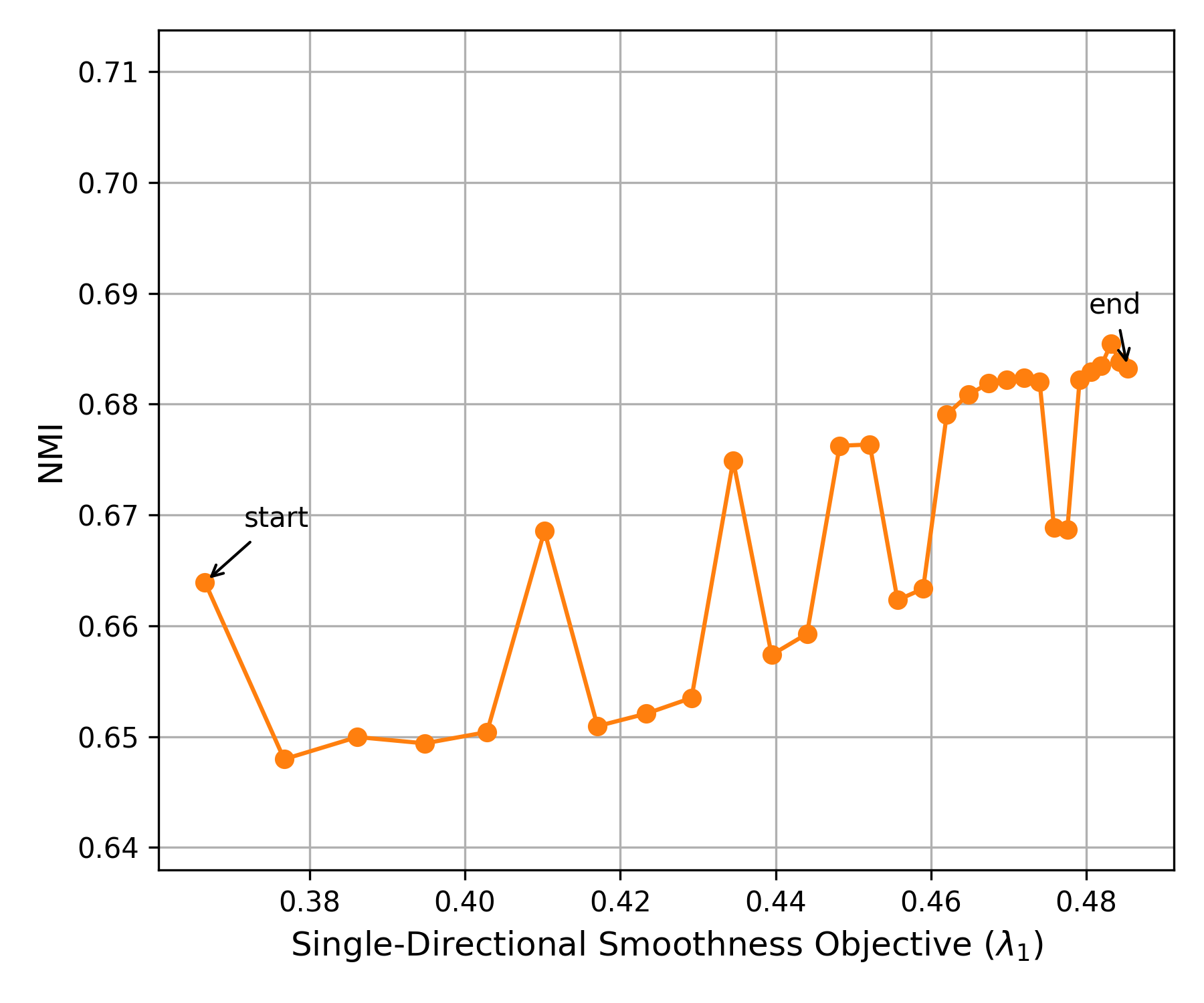}
    \caption{30-iteration direct optimization of single-directional smoothness objective on Digits dataset: NMI vs. second smallest eigenvalue $\lambda_1(\mathbf{L(\pmb{\mu})})$.}
    \label{fig:vector_digits}
\end{figure}

\begin{figure}[H]
    \centering
    \includegraphics[width=3.5in]{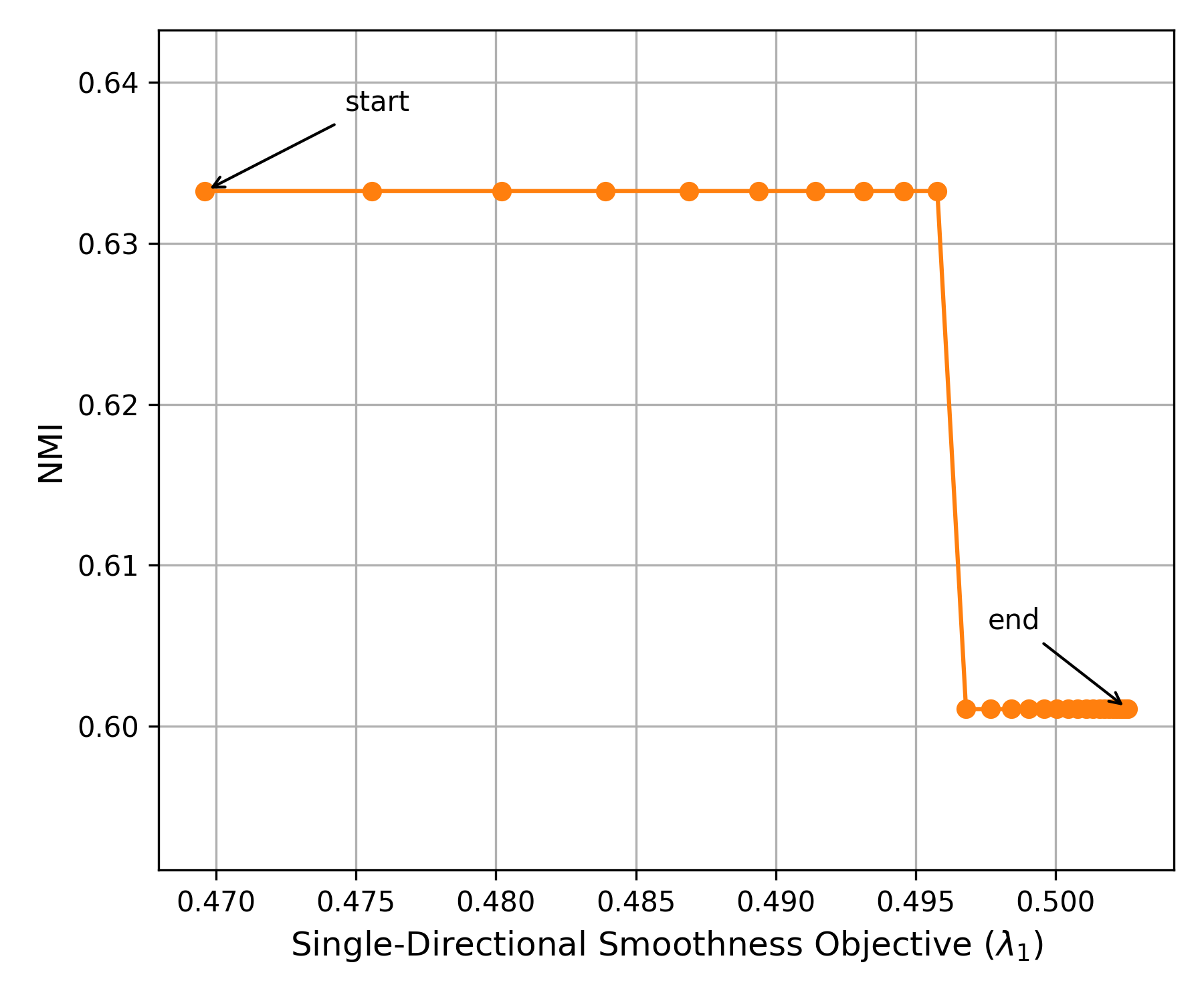}
    \caption{30-iteration direct optimization of single-directional smoothness objective on Nutrimouse dataset: NMI vs. second smallest eigenvalue $\lambda_1(\mathbf{L(\pmb{\mu})})$.}
    \label{fig:vector_mouse}
\end{figure}

\begin{figure}[H]
    \centering
    \includegraphics[width=3.5in]{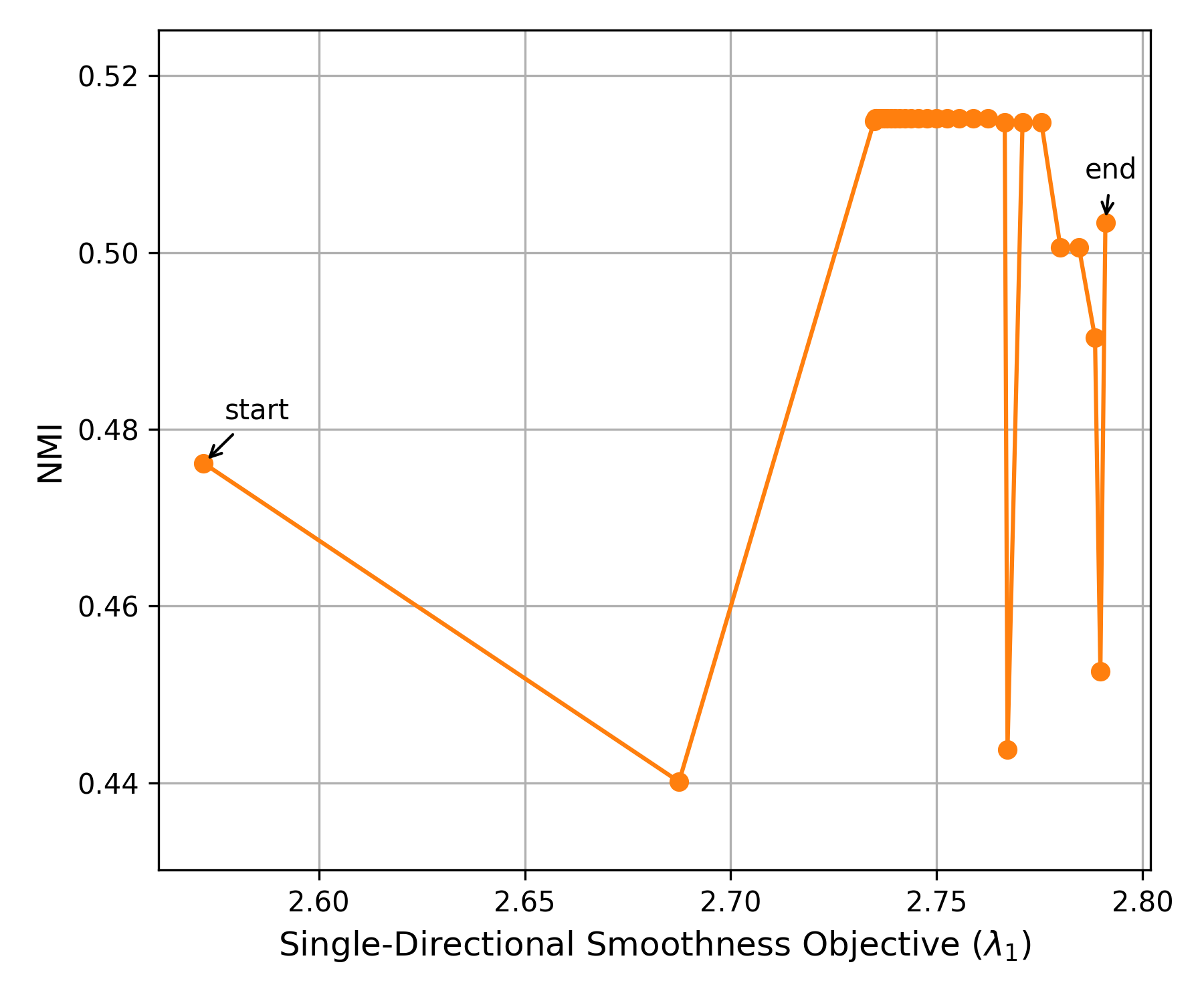}
    \caption{30-iteration direct optimization of single-directional smoothness objective on MSRC dataset: NMI vs. second smallest eigenvalue $\lambda_1(\mathbf{L(\pmb{\mu})})$.}
    \label{fig:vector_msrc}
\end{figure}

\begin{figure}[H]
    \centering
    \includegraphics[width=3.5in]{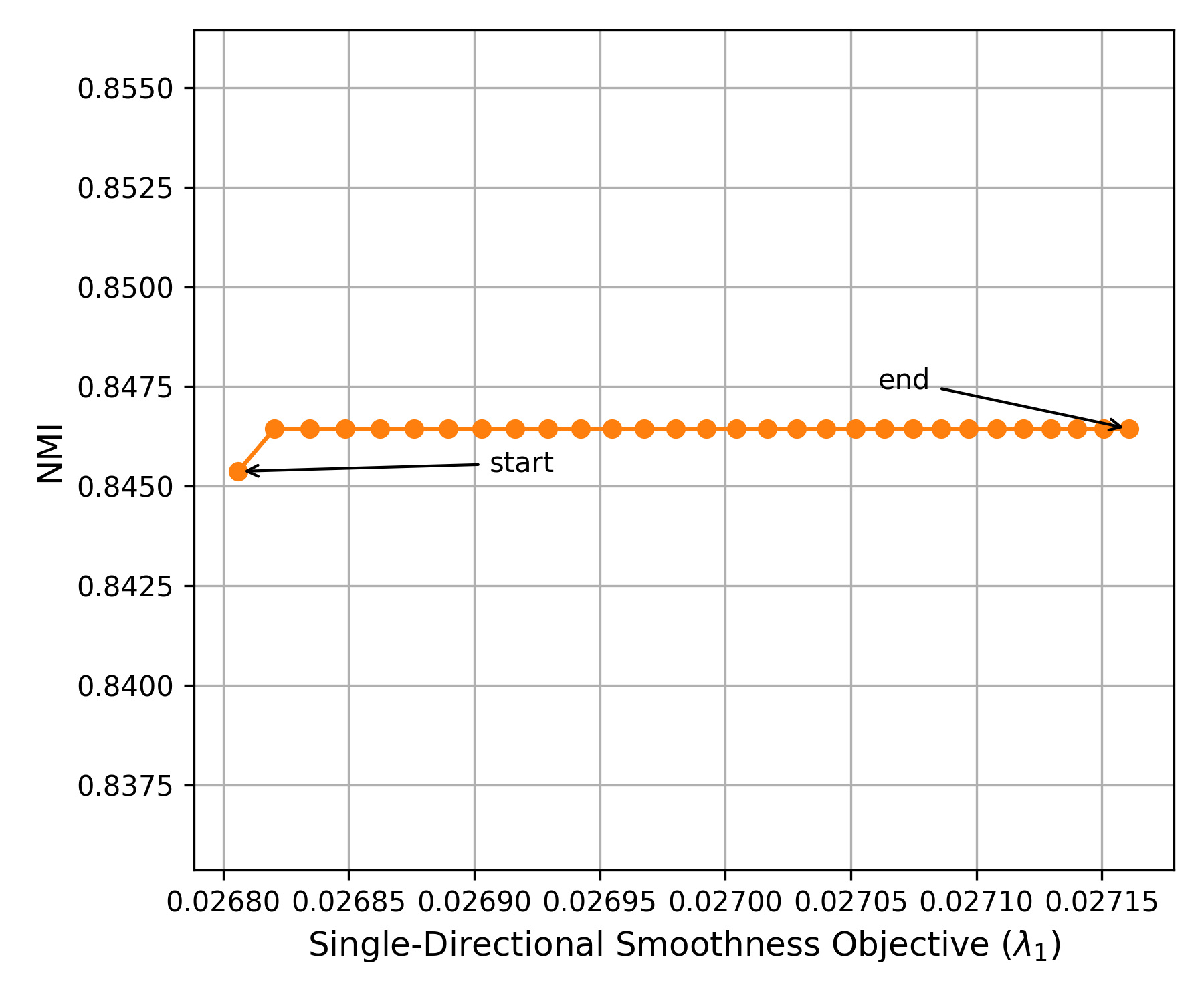}
    \caption{30-iteration direct optimization of single-directional smoothness objective on NGM dataset: NMI vs. second smallest eigenvalue $\lambda_1(\mathbf{L(\pmb{\mu})})$.}
    \label{fig:vector_mix}
\end{figure}

\revise{Figures~\ref{fig:matrix_synth} - ~\ref{fig:matrix_mix}} report the NMI obtained during direct optimization of the BASE smoothness objective $\sum_{j=1}^{k} \lambda_j(\mathbf{L(\pmb{\mu}))}$. On the Digits, \revise{Nutrimouse, and NGM} datasets, the final NMI matches that of the single-directional approach,  indicating that both formulations are equally effective in this case. \revise{On the MSRC dataset, the BASE smoothness objective optimization falls a bit behind the single-directional one}, but on the SBM and Caltech-7 datasets, it surpasses the single-directional objective optimization, displaying a benefit in directly targeting the full bottom-$k$ subspace.

\begin{figure}[H]
    \centering
    \includegraphics[width=3.5in]{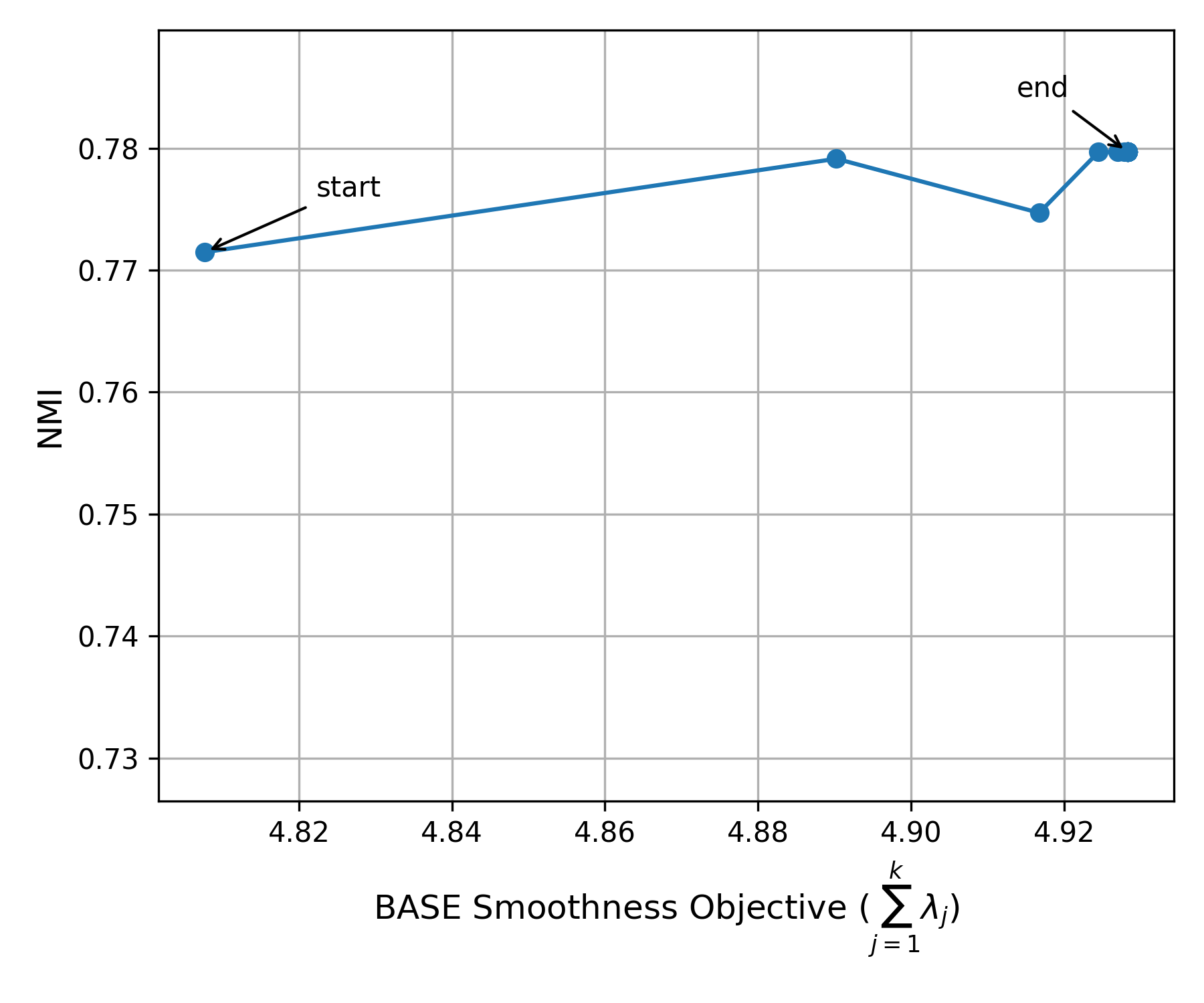}
    \caption{30-iteration direct optimization of BASE smoothness objective on SBM dataset: NMI vs. $\sum_{j=1}^{k} \lambda_j(\mathbf{L(\pmb{\mu})})$.}
    \label{fig:matrix_synth}
\end{figure}

\begin{figure}[H]
    \centering
    \includegraphics[width=3.5in]{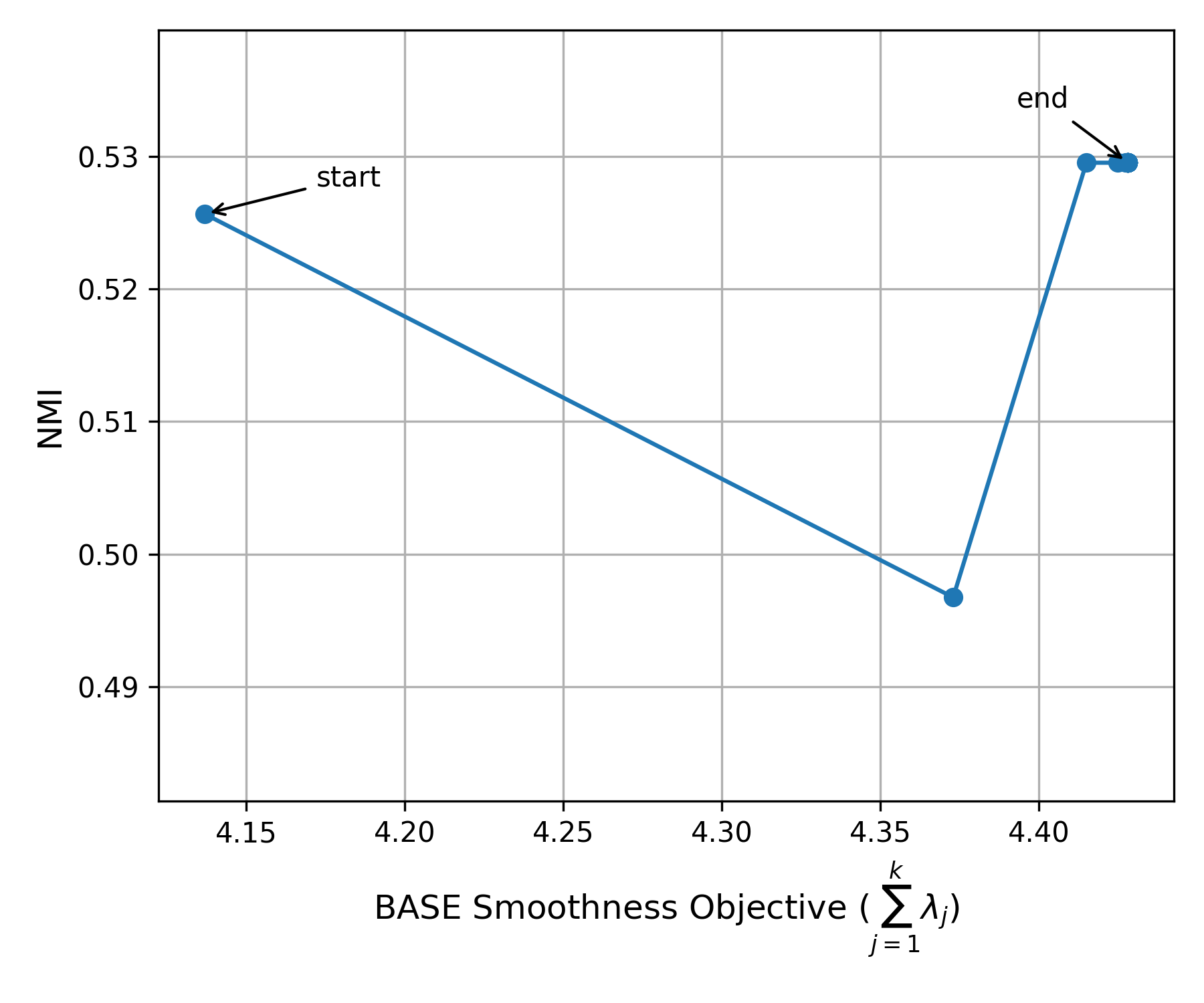}
    \caption{30-iteration direct optimization of BASE smoothness objective on Caltech-7 dataset: NMI vs. $\sum_{j=1}^{k} \lambda_j(\mathbf{L(\pmb{\mu})})$.}
    \label{fig:matrix_cal}
\end{figure}

\begin{figure}[H]
    \centering
    \includegraphics[width=3.5in]{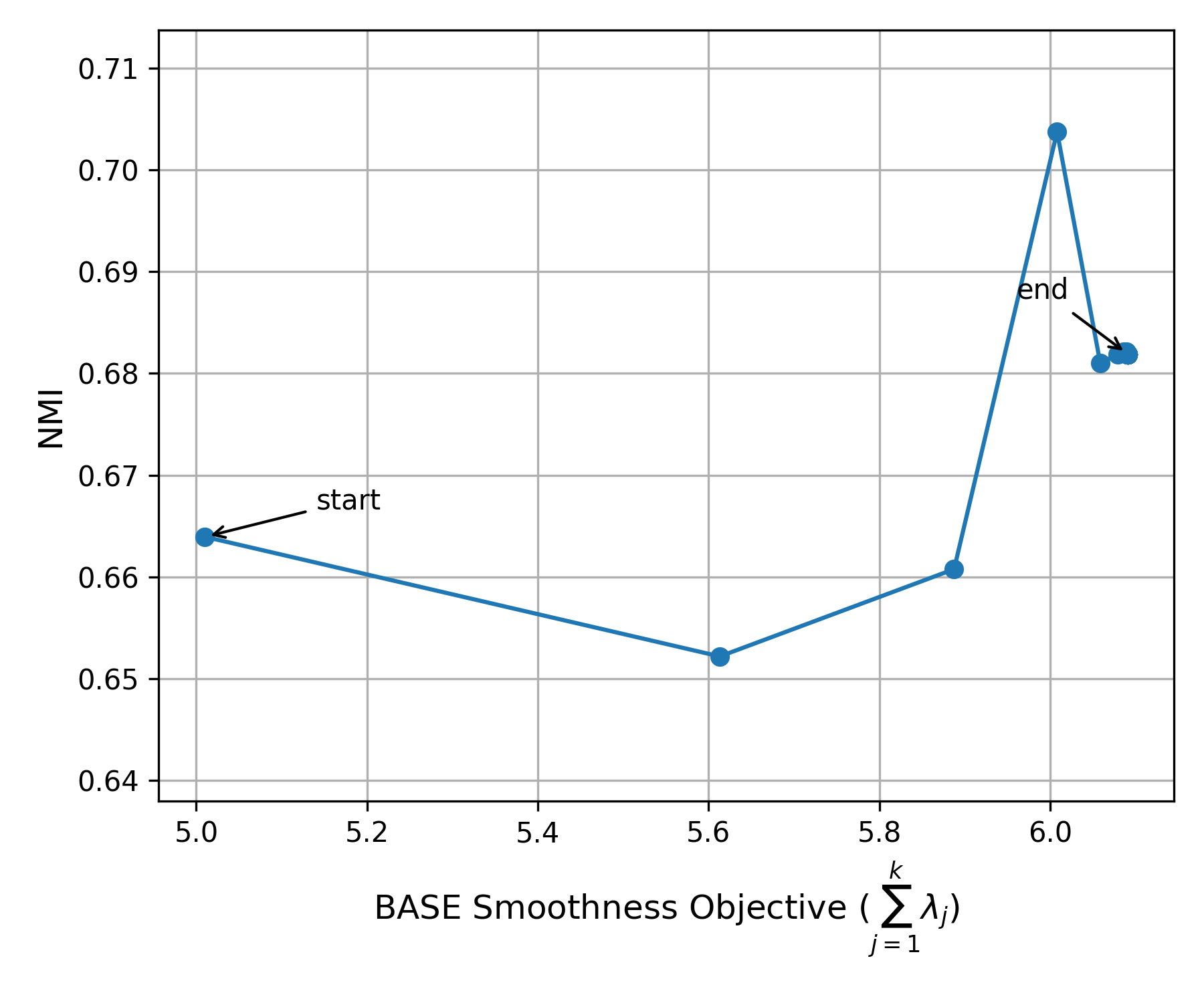}
    \caption{30-iteration direct optimization of BASE smoothness objective on Digits dataset: NMI vs. $\sum_{j=1}^{k} \lambda_j(\mathbf{L(\pmb{\mu})})$.}
    \label{fig:matrix_digits}
\end{figure}

\begin{figure}[H]
    \centering
    \includegraphics[width=3.5in]{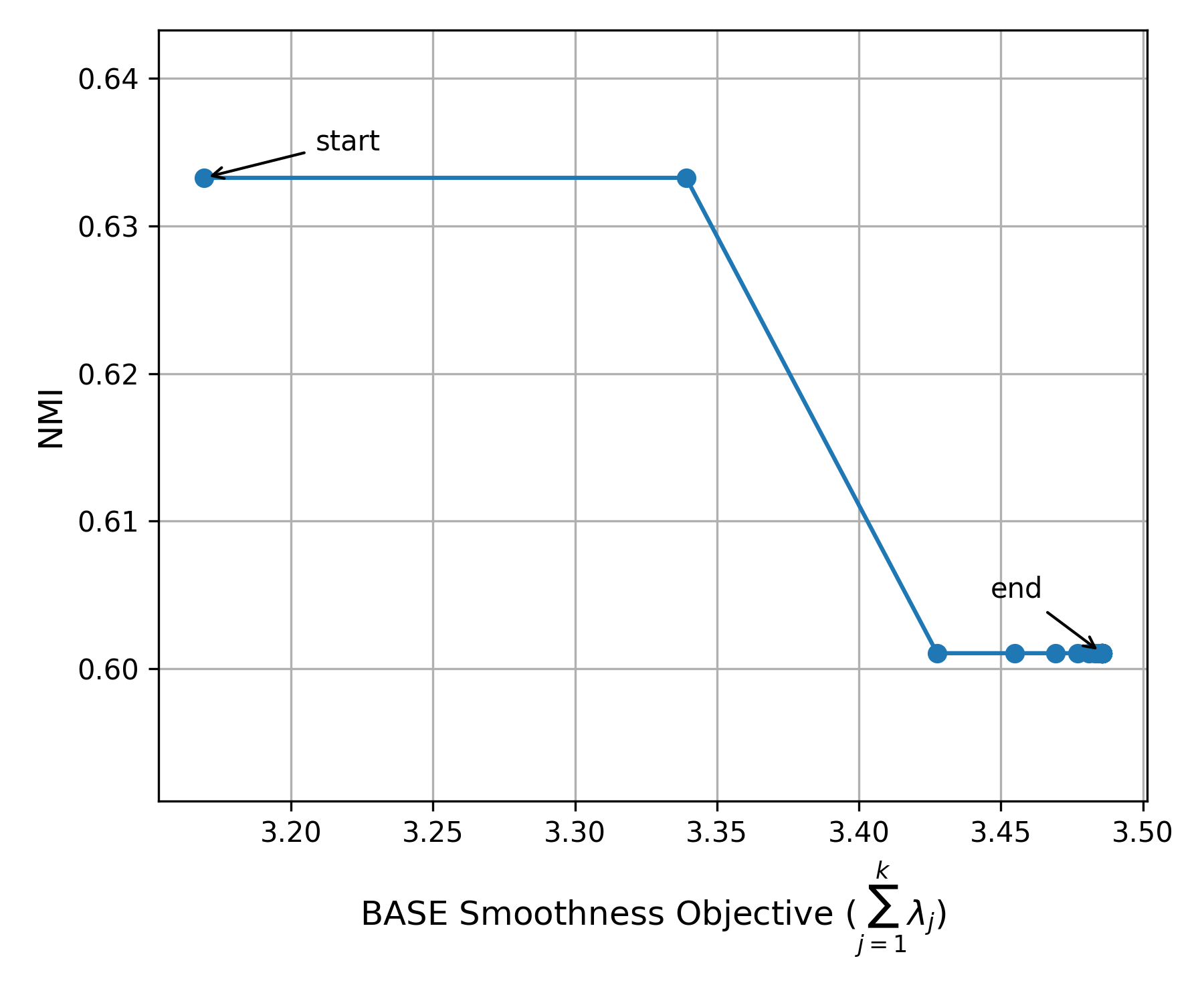}
    \caption{30-iteration direct optimization of BASE smoothness objective on Nutrimouse dataset: NMI vs. $\sum_{j=1}^{k} \lambda_j(\mathbf{L(\pmb{\mu})})$.}
    \label{fig:matrix_mouse}
\end{figure}

\begin{figure}[H]
    \centering
    \includegraphics[width=3.5in]{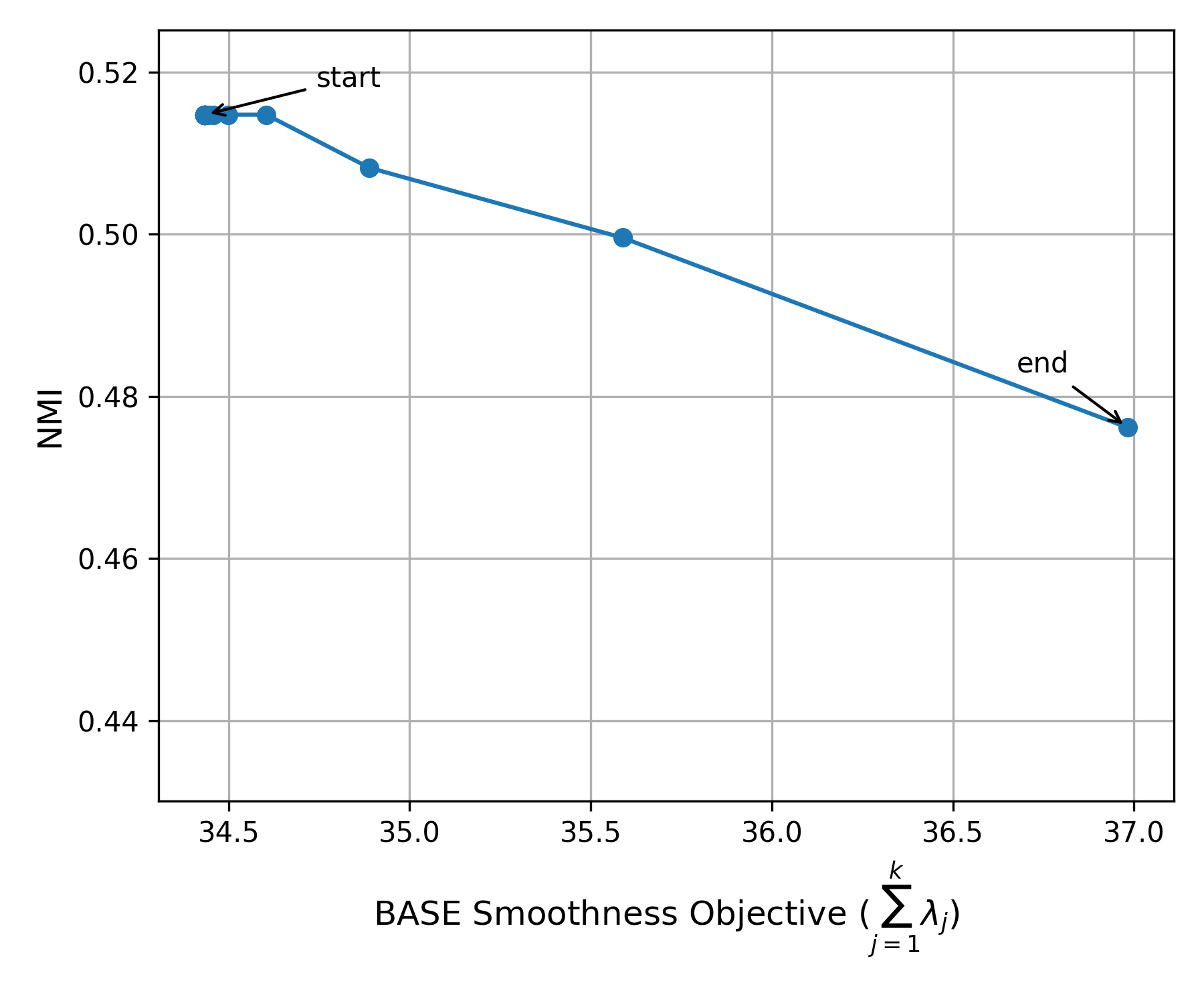}
    \caption{30-iteration direct optimization of BASE smoothness objective on MSRC dataset: NMI vs. $\sum_{j=1}^{k} \lambda_j(\mathbf{L(\pmb{\mu})})$.}
    \label{fig:matrix_msrc}
\end{figure}

\begin{figure}[H]
    \centering
    \includegraphics[width=3.5in]{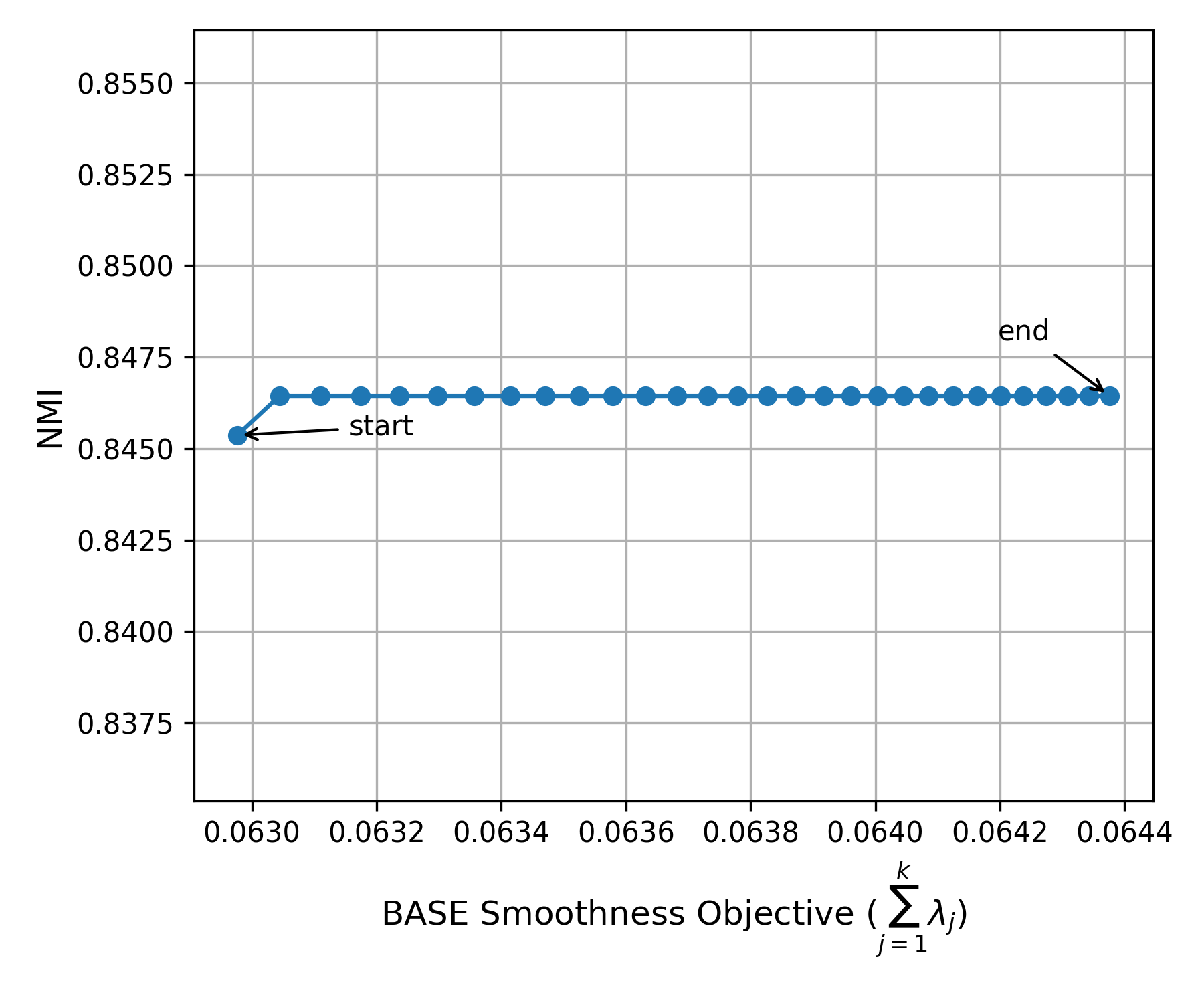}
    \caption{30-iteration direct optimization of BASE smoothness objective on NGM dataset: NMI vs. $\sum_{j=1}^{k} \lambda_j(\mathbf{L(\pmb{\mu})})$.}
    \label{fig:matrix_mix}
\end{figure}

\subsection{RJD-BASE: Trial Landscape and Selection}

Direct optimization of the smoothness objectives provides high-quality embeddings, especially in the BASE objective case. Here, in an effort to \revise{simplify} and parallelize, we evaluate whether the BASE smoothness objective can serve as an effective selection criterion across many RJD trials.

We run RJD-BASE for T=3000 and plot NMI against the BASE smoothness objective $\sum_{j=1}^{k} \lambda_j(\mathbf{L(\pmb{\mu})})$ for each RJD instance. We also mark the mean NMI point and the point selected by RJD-BASE.

In \revise{Figures \ref{synth_scatter} - \ref{mix_scatter}}, we see that over 3000 trials in all three datasets, RJD-BASE yields an above average RJD instance selection with respect to the end-goal NMI.

\begin{figure}[H]
    \centering
    \includegraphics[width=3.5in]{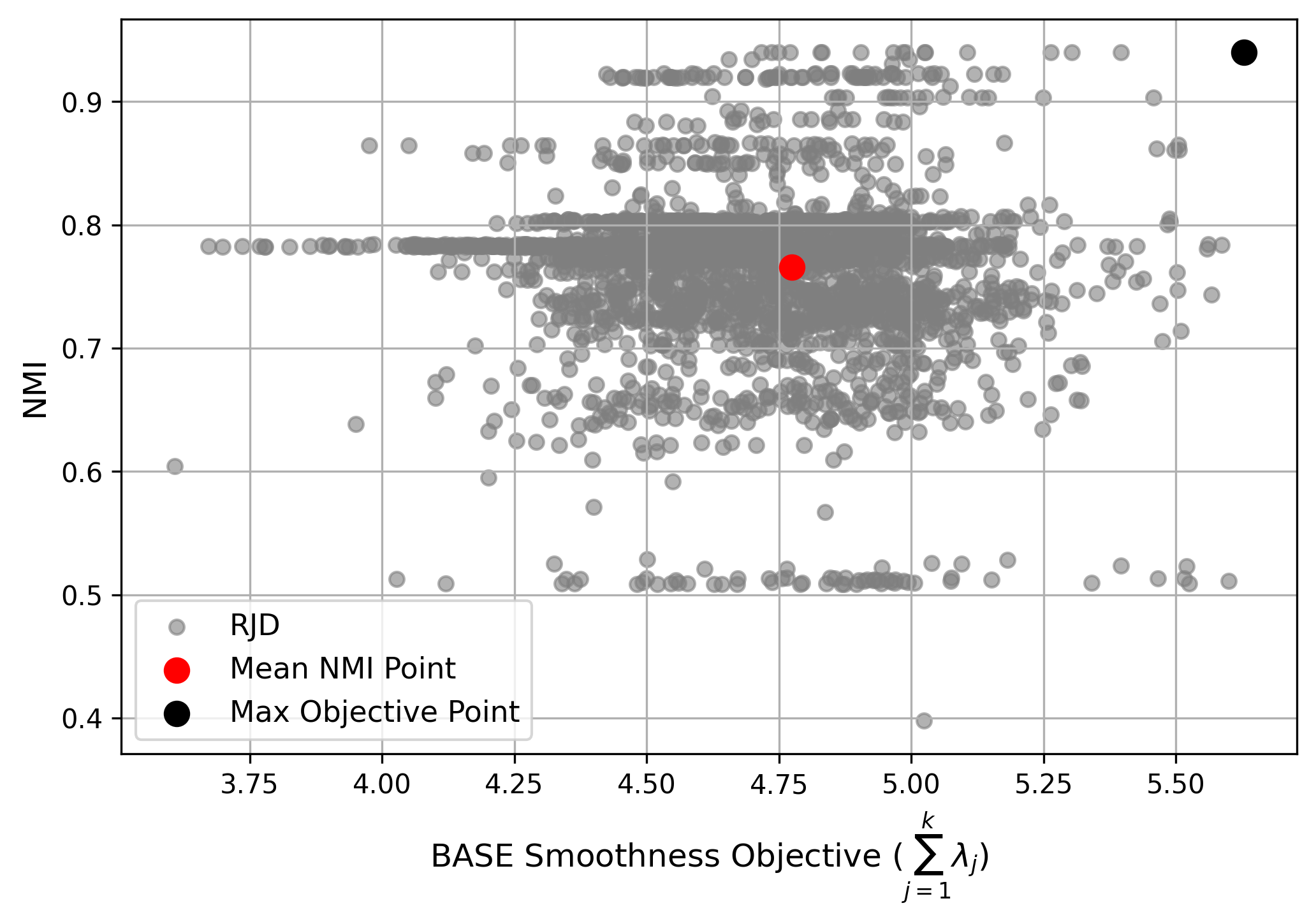}
    \caption{Scatter plot of NMI vs. BASE smoothness objective for 3000 independent RJD instance on the weighted SBM dataset. Mean NMI point indicated in red and point maximizing BASE smoothness objective (i.e. that which would be selected by RJD-BASE) in black.  }
    \label{synth_scatter}
\end{figure}

\begin{figure}[H]
    \centering
    \includegraphics[width=3.5in]{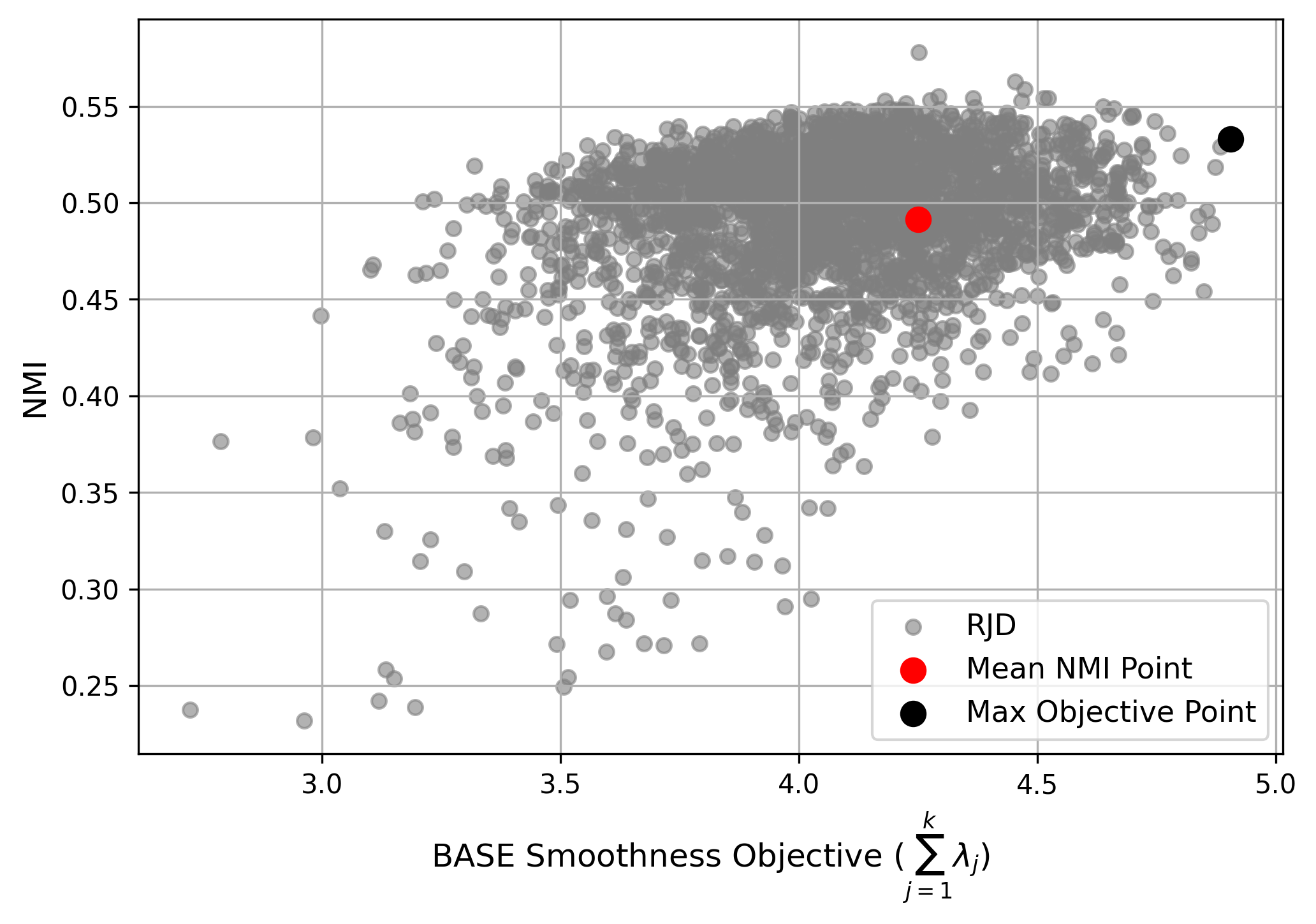}
    \caption{Scatter plot of NMI vs. BASE smoothness objective for 3000 independent RJD instance on the Caltech-7 dataset. Mean NMI point indicated in red and point maximizing BASE smoothness objective (i.e. that which would be selected by RJD-BASE) in black.}
    \label{cal_scatter}
\end{figure}

\begin{figure}[H]
    \centering
    \includegraphics[width=3.5in]{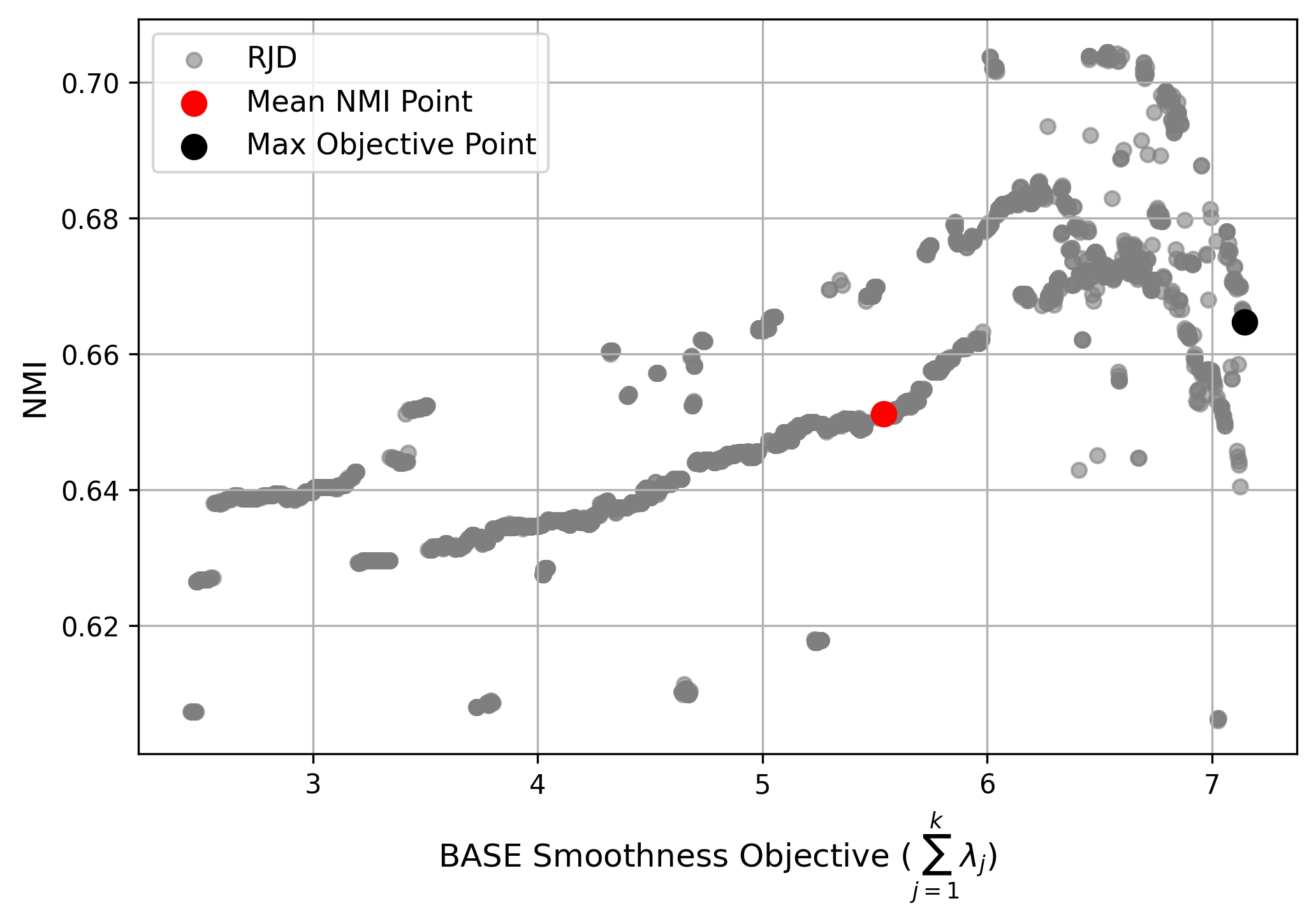}
    \caption{Scatter plot of NMI vs. BASE smoothness objective for 3000 independent RJD instance on the Digits dataset. Mean NMI point indicated in red and point maximizing BASE smoothness objective (i.e. that which would be selected by RJD-BASE) in black.}
    \label{digits_scatter}
\end{figure}

\begin{figure}[H]
    \centering
    \includegraphics[width=3.5in]{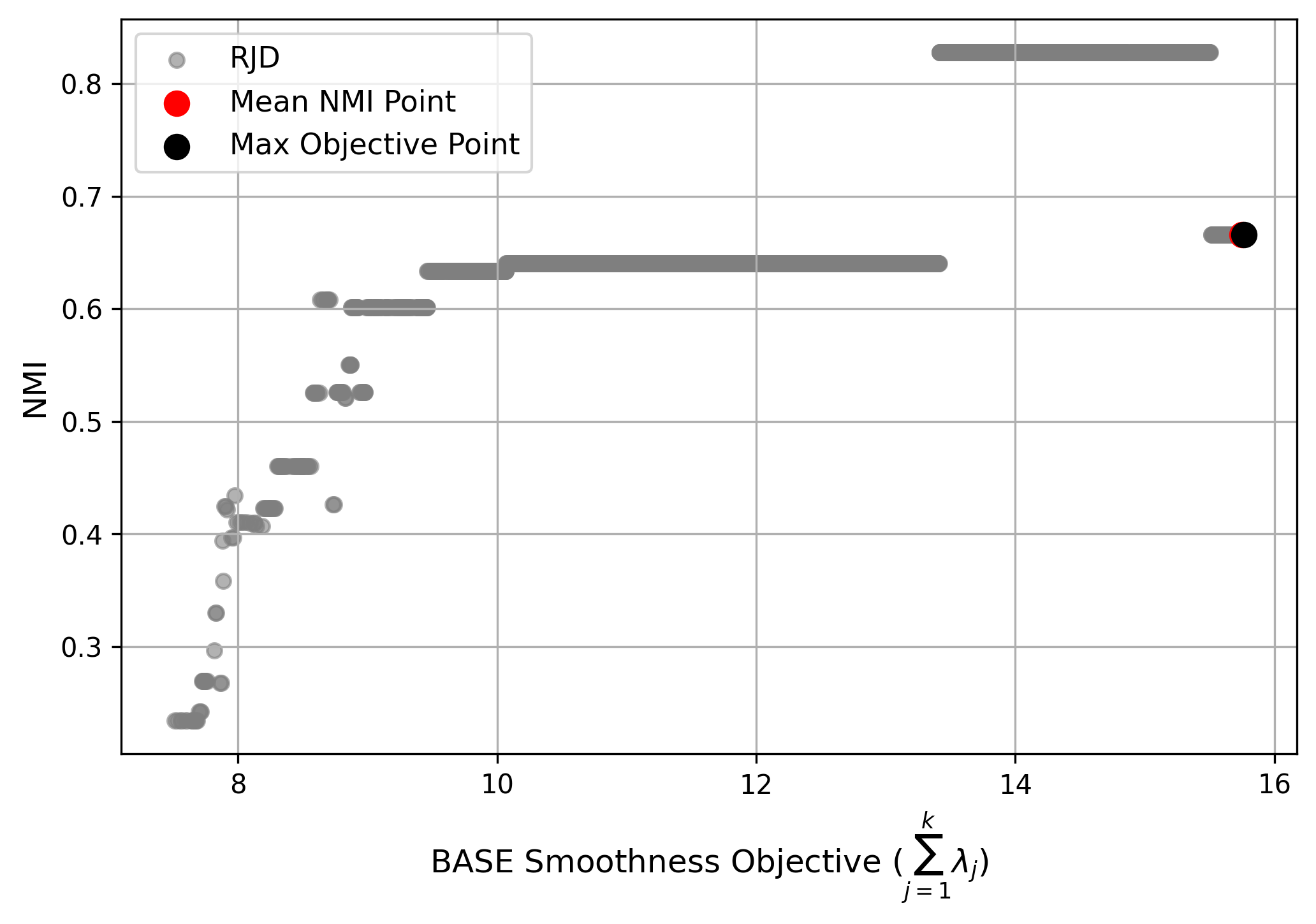}
    \caption{Scatter plot of NMI vs. BASE smoothness objective for 3000 independent RJD instance on the Nutrimouse dataset. Mean NMI point indicated in red and point maximizing BASE smoothness objective (i.e. that which would be selected by RJD-BASE) in black.}
    \label{mouse_scatter}
\end{figure}

\begin{figure}[H]
    \centering
    \includegraphics[width=3.5in]{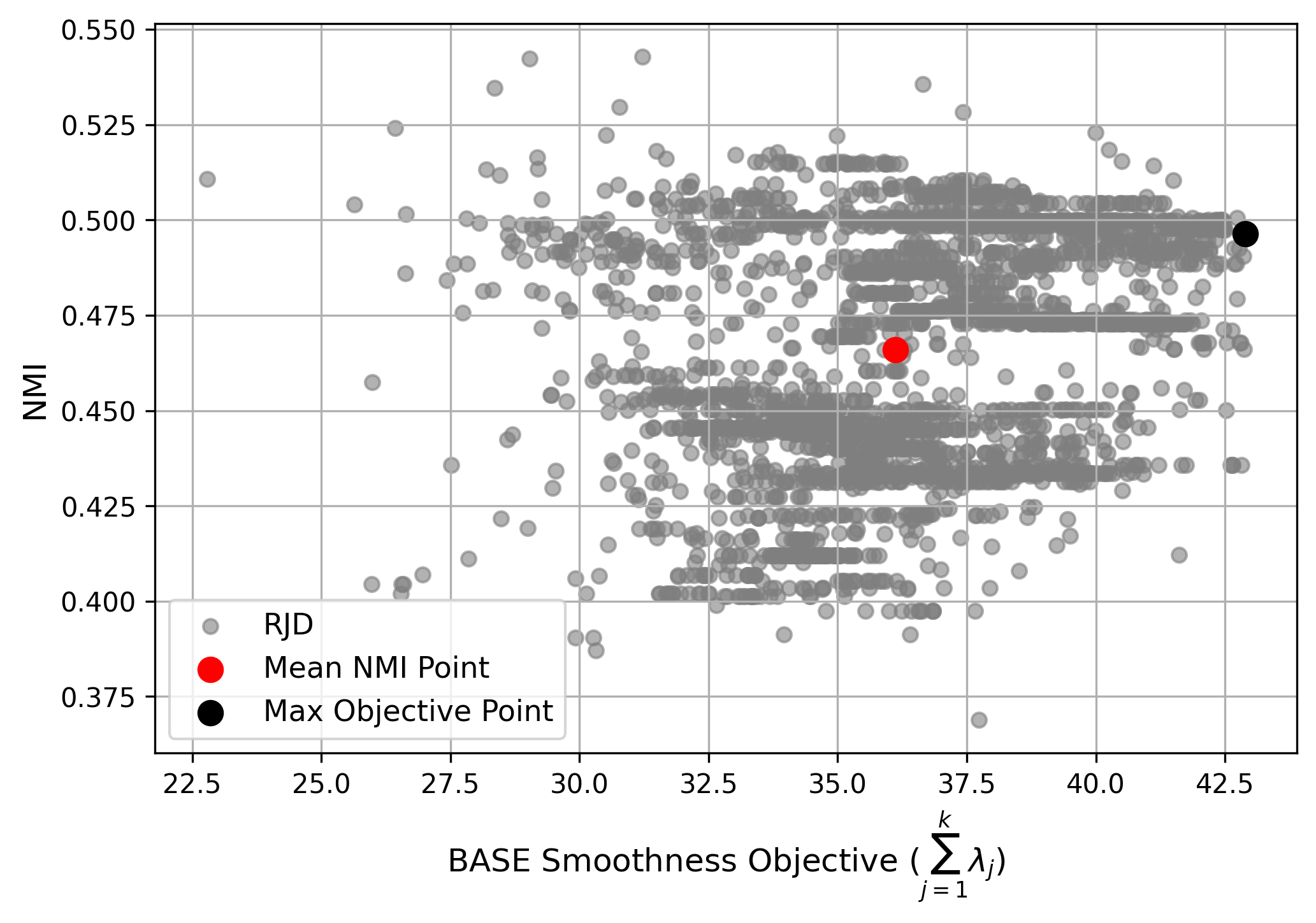}
    \caption{Scatter plot of NMI vs. BASE smoothness objective for 3000 independent RJD instance on the MSRC dataset. Mean NMI point indicated in red and point maximizing BASE smoothness objective (i.e. that which would be selected by RJD-BASE) in black.}
    \label{msrc_scatter}
\end{figure}

\begin{figure}[H]
    \centering
    \includegraphics[width=3.5in]{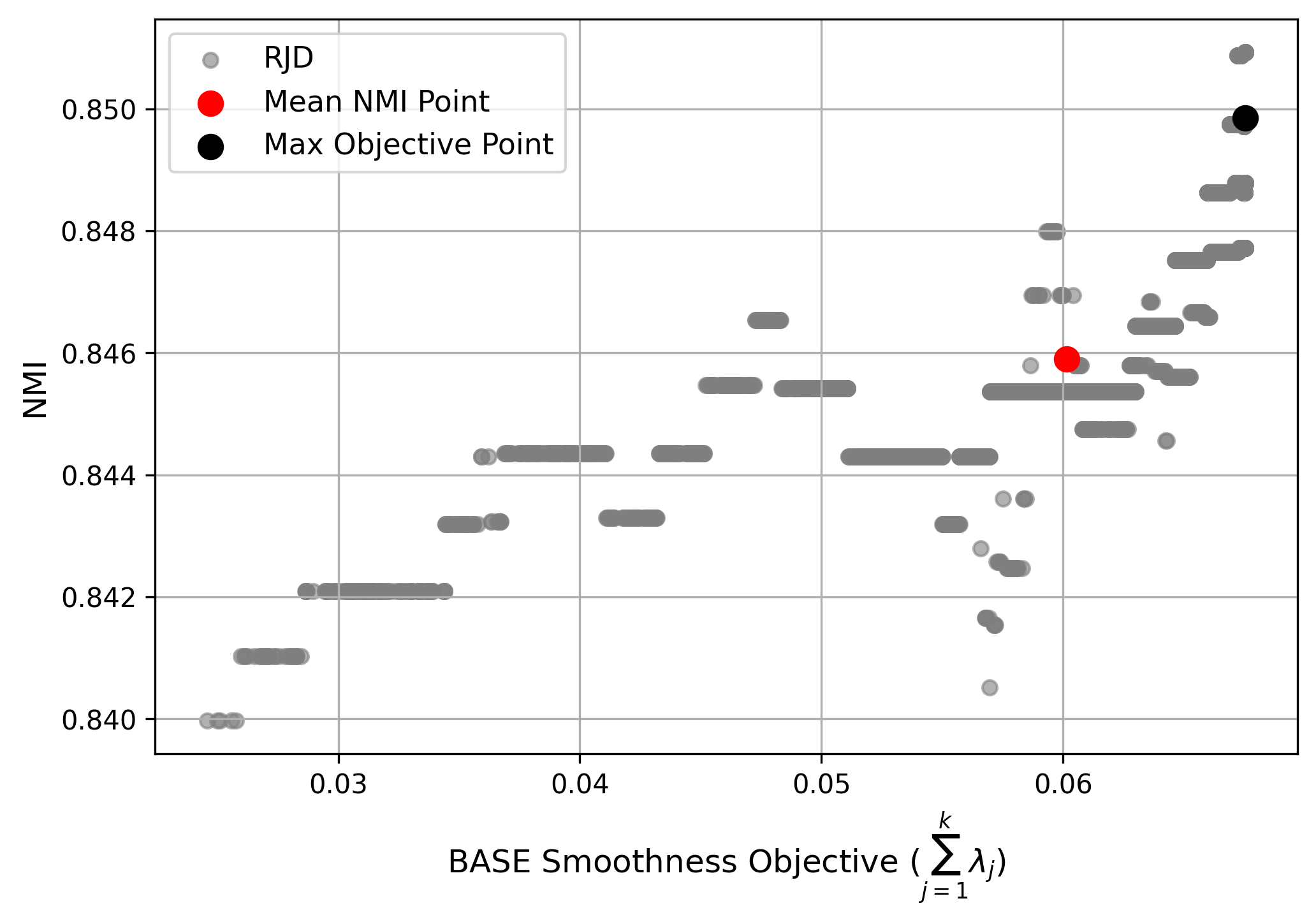}
    \caption{Scatter plot of NMI vs. BASE smoothness objective for 3000 independent RJD instance on the NGM dataset. Mean NMI point indicated in red and point maximizing BASE smoothness objective (i.e. that which would be selected by RJD-BASE) in black.}
    \label{mix_scatter}
\end{figure}

To more concretely quantify the effectiveness of this selection rule, we perform 1000 trials where in each trial we run RJD-BASE with $T=10$ and record whether the selected embedding’s NMI is above the global mean. This yields an empirical estimate of how often RJD-BASE beats a random draw in expectation even with very small $T$. \revise{The weighted SBM, Caltech-7, Digits, Nutrimouse, MSRC, and NGM datasets achieved 57\%, 66\%, 96\%, 76\%, 85\%, and 99\% above-average embeddings, respectively.}


\subsection{RJD-BASE with QN-Diag and JADE}
\label{rjdbase_qn_diag_jade}

While approximate joint diagonalization algorithms such as QN-Diag and JADE are often employed in multimodal and blind source separation settings for downstream clustering or dimensionality reduction, they operate by optimizing global off-diagonal energy across the full spectrum of eigenvectors. In the context of spectral clustering, however, we observe that such refinement is often unnecessary and, in some cases, actively counterproductive. This is because clustering relies specifically on the structure of the bottom $k$ eigenvectors, and full-basis diagonalization may distort this subspace. In this section, we demonstrate that RJD-BASE, despite its simplicity and lack of iterative refinement, outperforms both QN-Diag and JADE.

We conduct the following experiment on our three datasets: We run RJD-BASE with $T=200$ and use the output embedding as the initialization to QN-Diag and JADE.  Note that although RJD-BASE directly produces only the $N \times k$ matrix of bottom-$k$ eigenvectors, iterative joint diagonalization methods such as QN-Diag and JADE operate on a full $N \times N$ orthogonal basis. To bridge this, we take the complete $N \times N$ eigenvector matrix from the selected RJD-BASE trial - the linear combination achieving the highest BASE smoothness objective - and use the full eigendecomposition of it to initialize the iterative method.
These algorithms then internally order the $N$ output vectors before extracting the bottom-$k$ subspace according to the the average of Rayleigh quotients over the modes. This reordering is part of the standard procedure to align the spectrum across modalities, but can change which $k$ directions are selected for clustering.

We track the NMI at each iteration of QN-Diag and JADE and plot the resulting learning curves. For reference, all 200 RJD embeddings are included at iteration index 0, enabling a direct comparison between the spread of randomized trials and the convergence behavior of the iterative methods. This setup serves to test whether QN-Diag and JADE can improve upon a reasonable, data-driven initialization and also lets us evaluate whether iterative algorithms such as QN-Diag and JADE can act as genuine refinement steps or simply alter the spectral subspace in ways that are misaligned with clustering objectives.


\begin{figure}[h]
    \centering
    \includegraphics[width=3.5in]{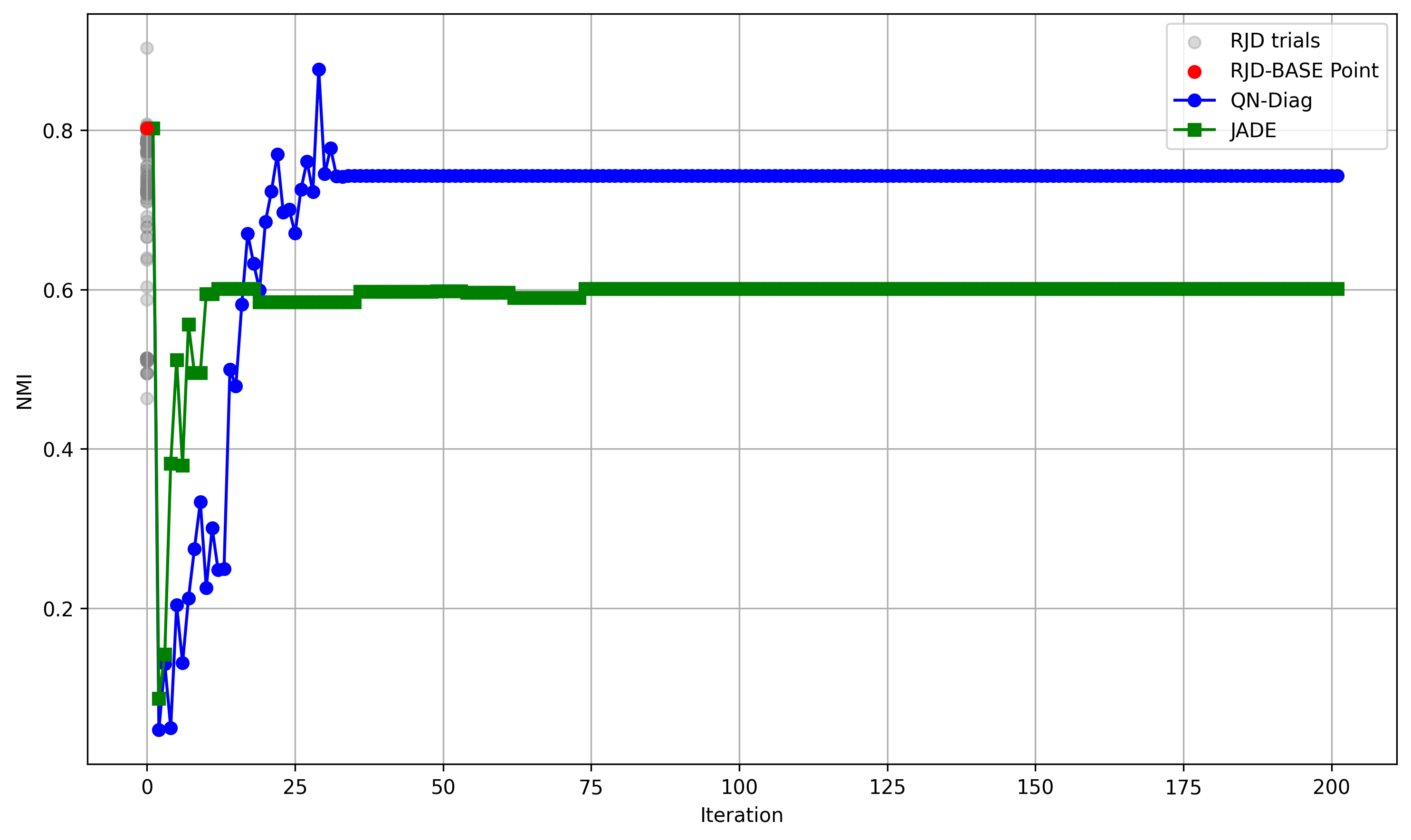}
    \caption{NMI convergence of QN-Diag and JADE when initialized with RJD-BASE on the weighted SBM dataset.}
    \label{synth_conv}
\end{figure}

\begin{figure}[h]
    \centering
    \includegraphics[width=3.5in]{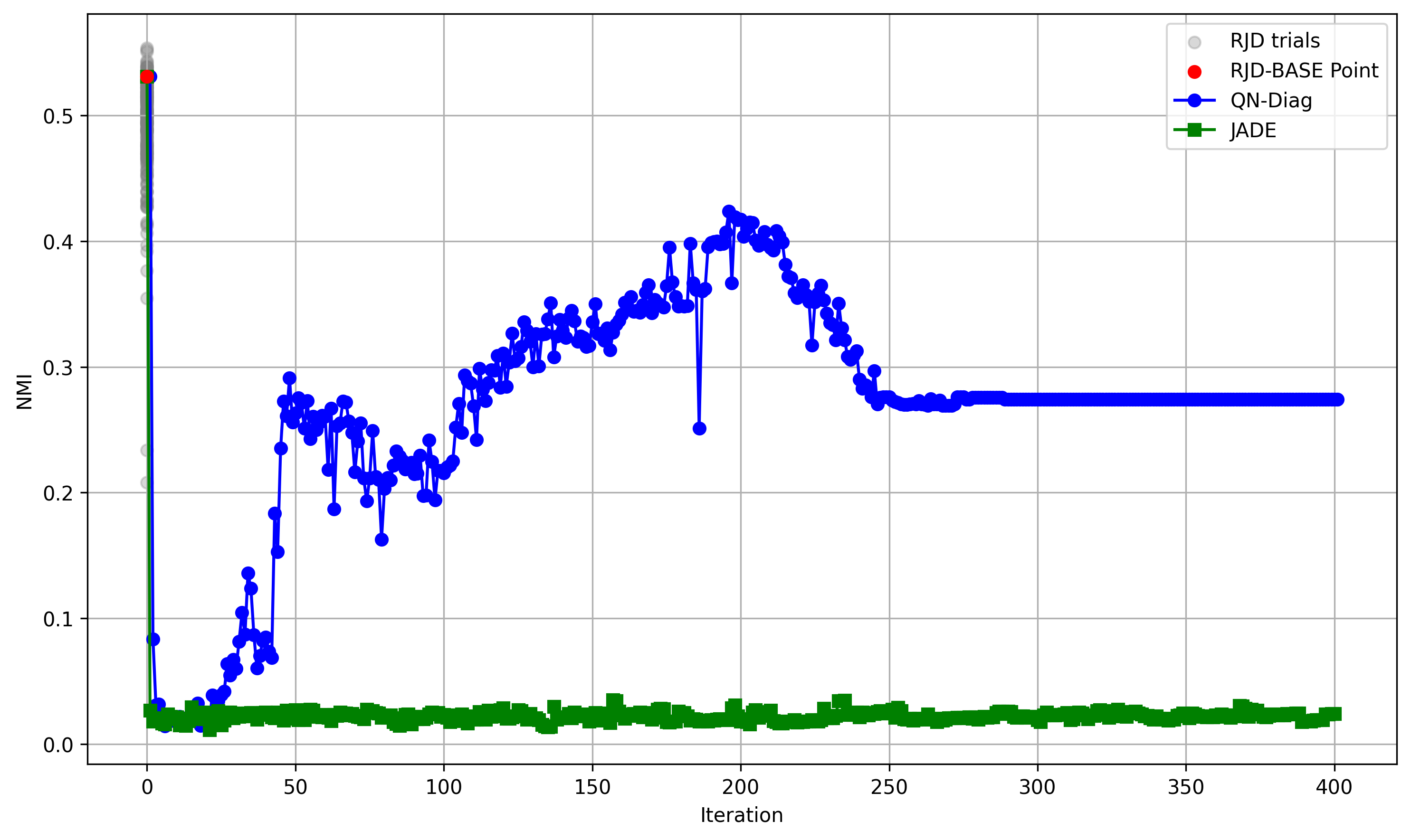}
    \caption{NMI convergence of QN-Diag and JADE when initialized with RJD-BASE on the Caltech-7 dataset.}
    \label{cal_conv}
\end{figure}

\begin{figure}[h]
    \centering
    \includegraphics[width=3.5in]{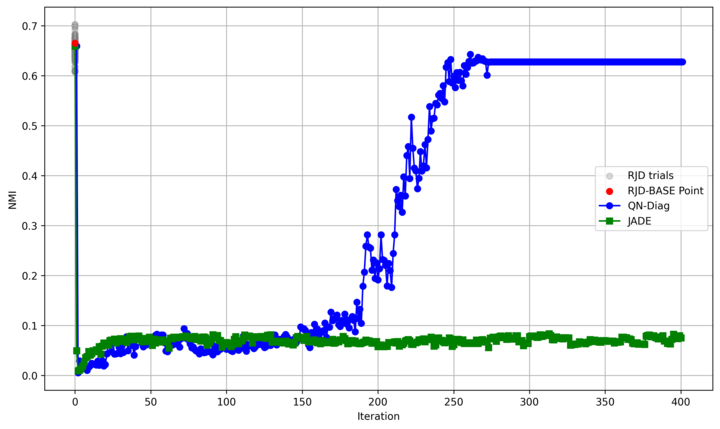}
    \caption{NMI convergence of QN-Diag and JADE when initialized with RJD-BASE on the Digits dataset.}
    \label{digits_conv}
\end{figure}

\begin{figure}[h]
    \centering
    \includegraphics[width=3.5in]{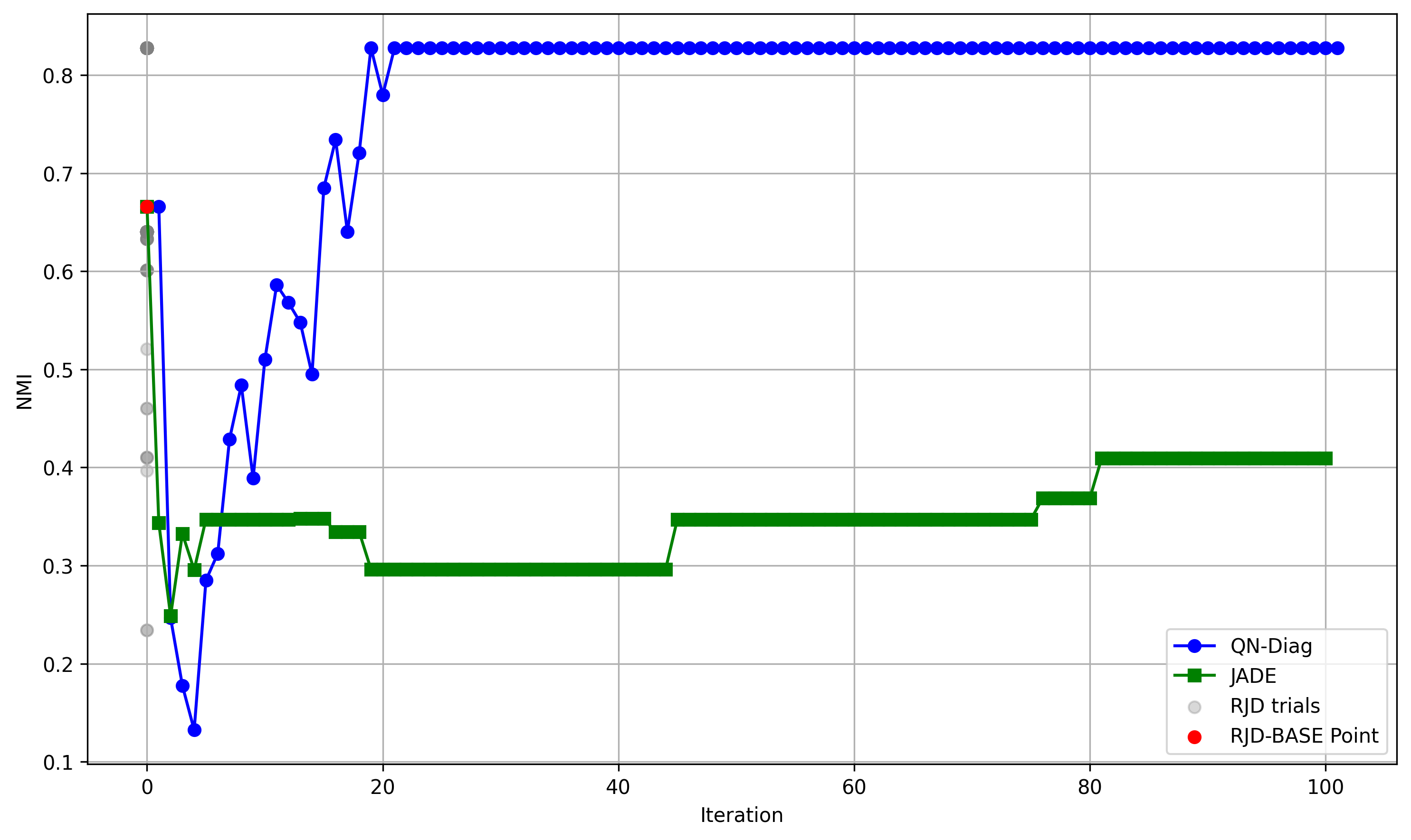}
    \caption{NMI convergence of QN-Diag and JADE when initialized with RJD-BASE on the Nutrimouse dataset.}
    \label{mouse_conv}
\end{figure}

\begin{figure}[h]
    \centering
    \includegraphics[width=3.5in]{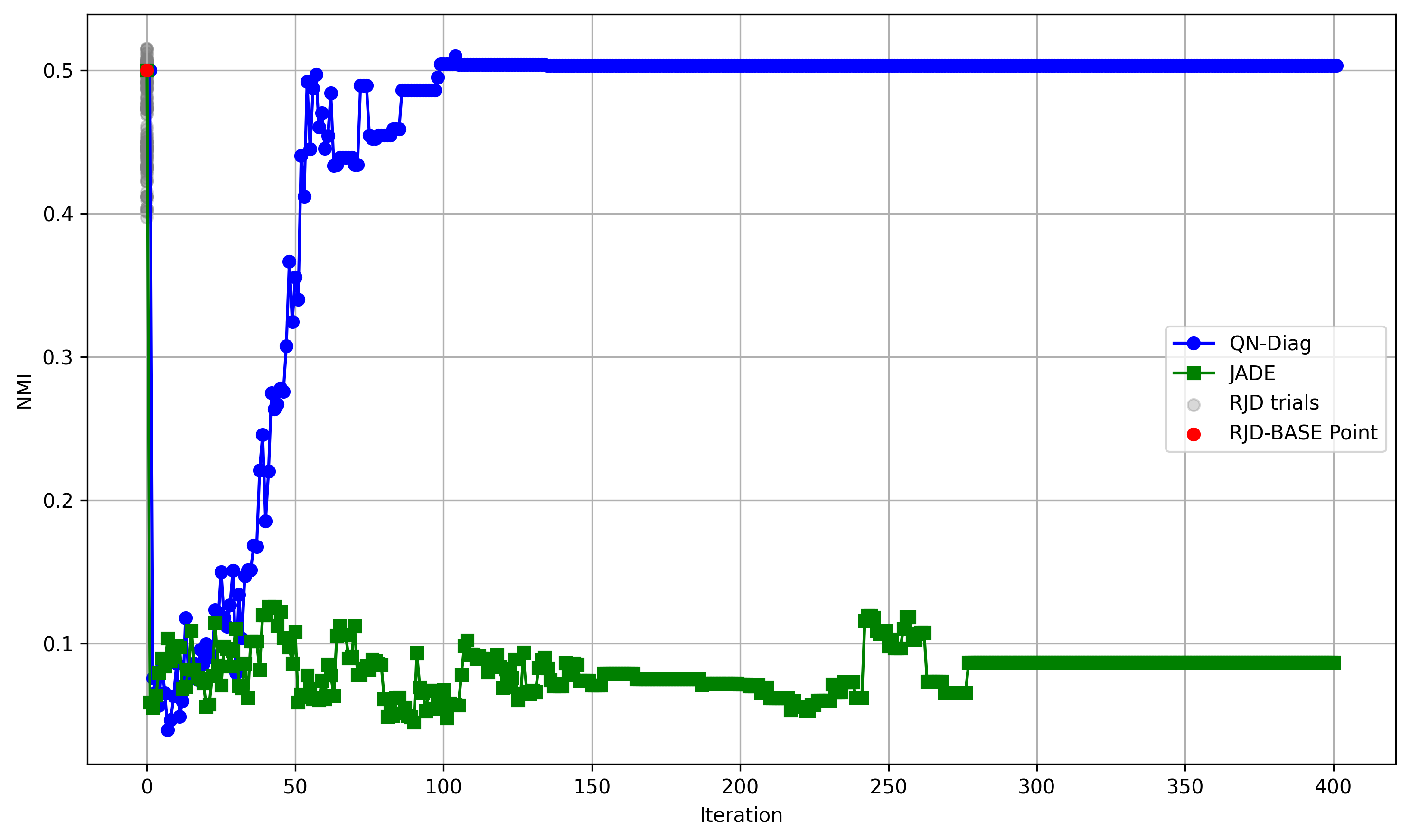}
    \caption{NMI convergence of QN-Diag and JADE when initialized with RJD-BASE on the MSRC dataset.}
    \label{msrc_conv}
\end{figure}


 \revise{As illustrated in 
Figures \ref{synth_conv} - \ref{msrc_conv}, the first few JD iterations using either QN-Diag or JADE lead to  significant performance degradation for RJD-BASE on the SBM, Caltech-7, Digits Nutrimouse and MSRC datasets, suggesting that a full-basis diagonalization method such as QN-Diag or JADE could fails to improve RJD-BASE and potentially degrades the bottom-$k$ spectral subspace used for clustering. This supports the hypothesis that full-spectrum JD criteria are misaligned with the clustering objective, which depends only on the bottom-k subspace.
}

\revise{
Regarding asymptotic behavior, we observe that JADE consistently fails to recover performance across all datasets. In contrast, QN-Diag eventually reaches an accuracy level similar to RJD-BASE on the MSRC dataset and even achieves improved accuracy on the Nutrimouse dataset. This suggests that the KL-divergence loss employed by QN-Diag may be better aligned with the underlying structure of these specific datasets, effectively mitigating the inherent misalignment between the full-spectrum objective and the clustering task. A formal investigation into how to systematically leverage this KL-divergence loss is left for future research.}




\subsection{Impact of Modality Closeness}

\revise{We further investigate how the relative similarity between modalities affects the performance of RJD-BASE using the weighted SBM dataset. In this experiment, all modalities share the same ground-truth partition, but we explicitly control how close the graph Laplacians are to one another.}

\revise{We first generate a reference Laplacian $\mathbf{L}_{\mathrm{ref}}$ from a weighted SBM. Additional modalities are then constructed as convex combinations of this reference Laplacian and independently generated SBM Laplacians. Specifically, for modality $i$, we define
\[
\mathbf{L}^{(i)}(\alpha)
=
(1-\alpha)\mathbf{L}_{\mathrm{ref}}
+
\alpha\,\mathbf{L}^{(i)}_{\mathrm{indep}},
\qquad \alpha \in [0,1].
\]
Here, $\alpha=0$ corresponds to identical modalities, while $\alpha=1$ produces fully independent views.}

\revise{We quantify modality similarity using the average relative Frobenius distance to the reference:
\[
\frac{\|\mathbf{L}^{(i)} - \mathbf{L}_{\mathrm{ref}}\|_F}
{\|\mathbf{L}_{\mathrm{ref}}\|_F},
\]
averaged across modalities.}

\revise{For each $\alpha \in \{0.0, 0.1, 0.2, 0.4, 0.6, 0.8, 1.0\}$, we run RJD-BASE with $T=200$ and report the best NMI and average over $20$ independent repetitions. Figure~\ref{effect_a} shows the resulting performance as a function of the average relative distance ($\alpha$ grows from left to right).}

\begin{figure}[H]
    \centering
    \includegraphics[width=3.0in]{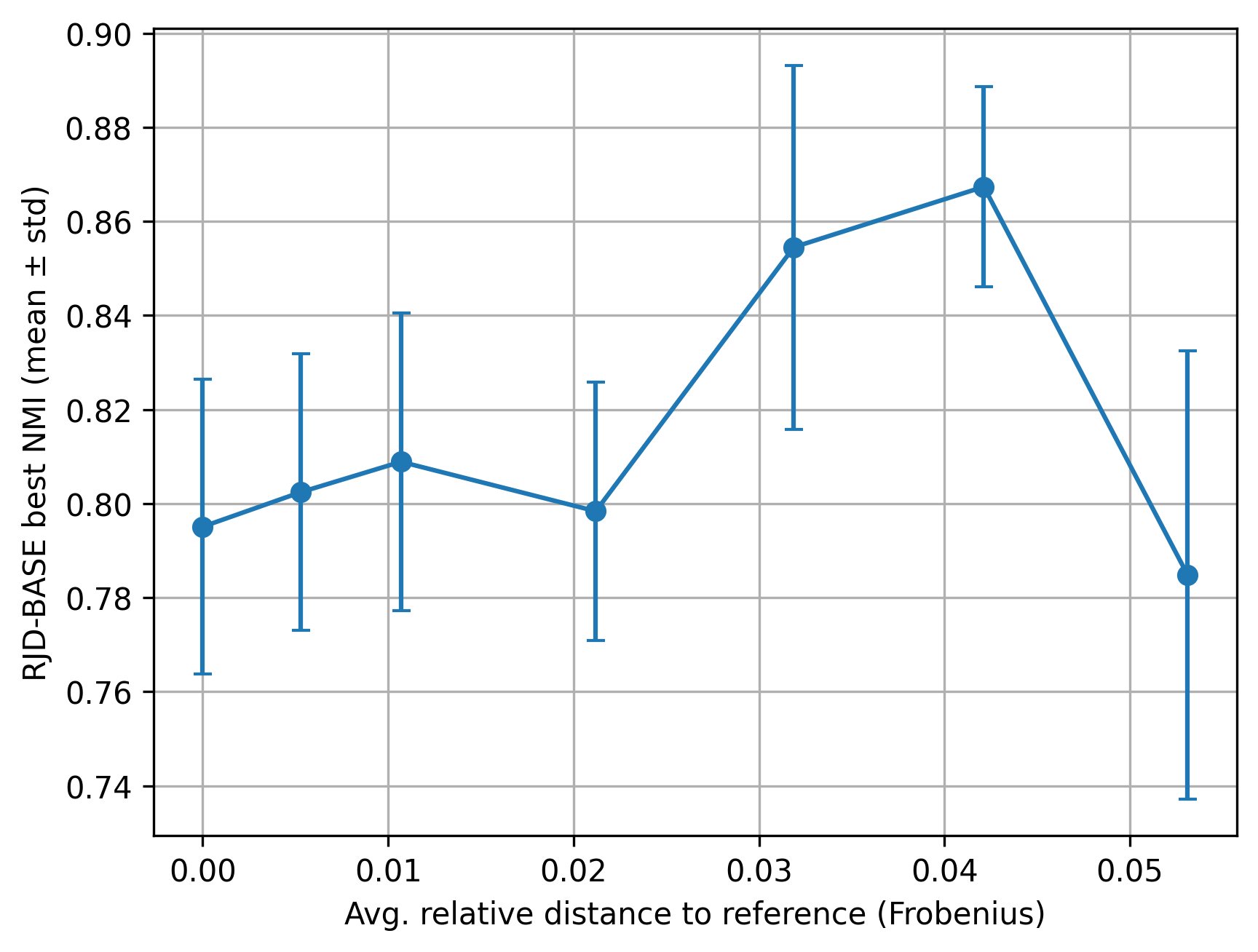}
    \caption{RJD-BASE performance (best NMI ± std) as a function of modality closeness in the weighted SBM. Modalities are generated as convex combinations of a reference Laplacian and independent SBM Laplacians, with closeness measured via average relative Frobenius distance. Peak performance occurs for moderately distinct but still correlated views.}
    \label{effect_a}
\end{figure}

\revise{As expected, we observe that when $\alpha=0$, all modalities are identical and RJD-BASE reduces to single-view spectral clustering and as $\alpha$ increases, performance improves, peaking for moderately distinct but still correlated modalities ($\alpha \approx 0.6$–$0.8$). This regime corresponds to complementary views that share structure while providing additional information. When $\alpha$ approaches $1$, modalities become too dissimilar and performance degrades, reflecting the diminishing benefit of aggregating unrelated views.}

\subsection{Distribution Comparison}\label{subsec:dist_comp}
\revise{
We compare the influence of the sampling distribution of $\mu$ on Algorithm~\ref{rjdbase}. The current sampling distribution is not uniform over the standard simplex, but rather concentrated toward the center \cite{willms2021uniform, smith2004sampling}. To sample uniformly over the standard simplex, we use a normalized exponential distribution \cite[Chapter XI.4]{distribution_book}. Specifically, we normalize a vector of i.i.d. exponential random variables with mean $1$:
\[ \mu = \frac{(\mu_1, \dots, \mu_n)}{\sum_{i=1}^n \mu_i}, \quad \mu_i \sim \text{Exp}(1) \text{ i.i.d.} \]
Empirically, the choice between the two distributions has a negligible impact on the final accuracy. To verify this, we run Algorithm~\ref{rjdbase} $50$ times with $T=200$ for both the centered and uniform sampling methods on the Digit dataset (Section~IV.\ref{subsec:digit}). For the concentrated distribution, we obtained an NMI mean of $0.6701$ (std $0.0019$), while the uniform distribution yielded an NMI mean of $0.6688$ (std $0.0012$). These results indicate that the choice of distribution has little effect on the clustering accuracy.
}

\subsection{Impact of $T$}
\label{subsec:T_sensitivity}

\revise{We further investigate the sensitivity of Algorithm~\ref{rjdbase} to the number of samples $T$. Since RJD-BASE selects the best embedding among $T$ random convex combinations of Laplacians, increasing $T$ improves the probability of sampling a high-quality combination. We quantify this by considering the Digits dataset and, for each $T \in \{5, 10, 20, 50, 100, 200, 500\}$, performing $20$ independent RJD-BASE runs and recording the best NMI per run. Figure~\ref{effect_t} shows the mean and standard deviation across runs. We observe that performance stabilizes quickly as $T$ increases. Values of $T \geq 100$ yield nearly identical mean performance with substantially reduced variability. This indicates that moderate values of $T$ are sufficient in practice and that RJD-BASE does not require excessively large sampling budgets to achieve stable accuracy.}

\begin{figure}[H]
    \centering
    \includegraphics[width=3.0in]{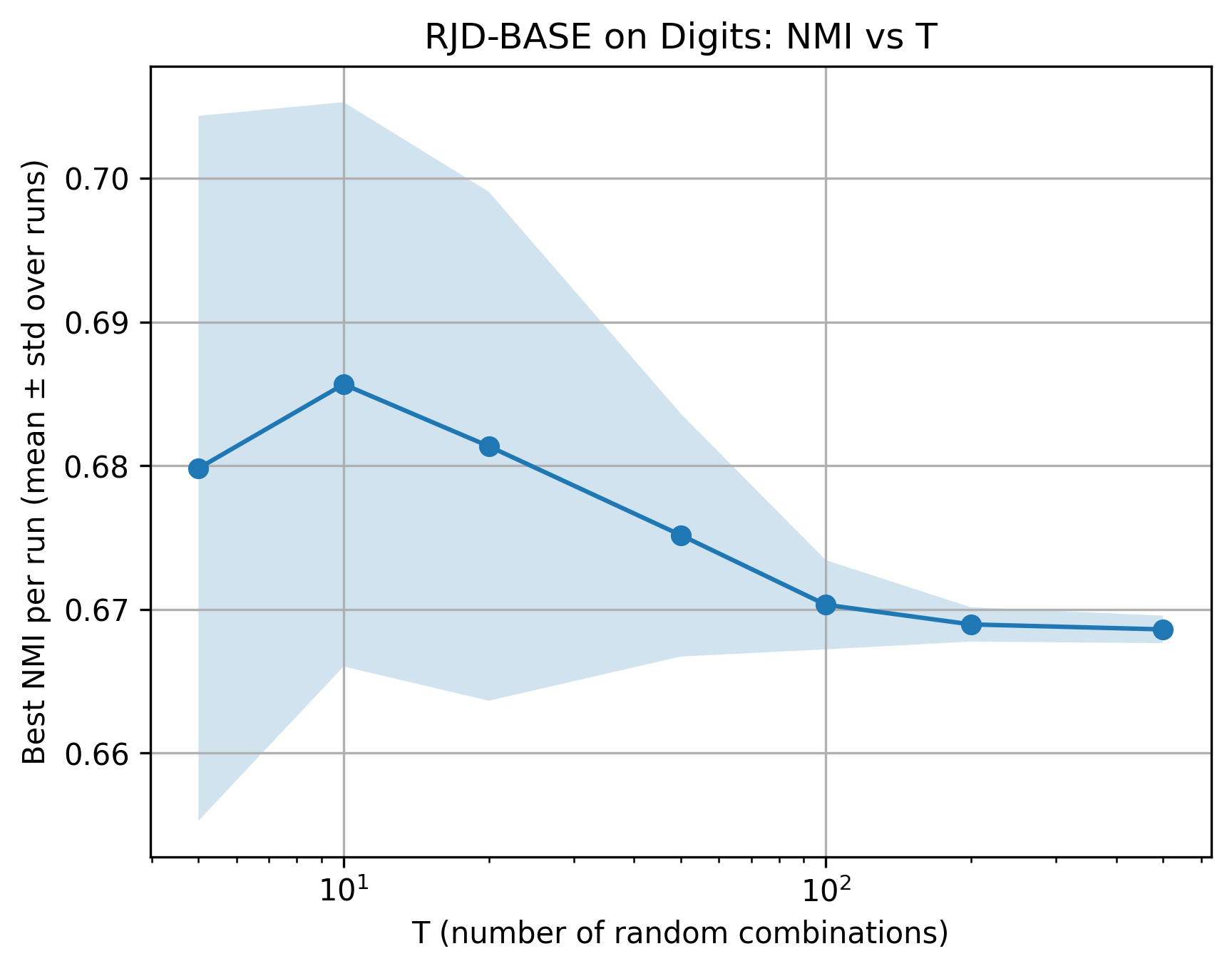}
    \caption{Mean $\pm$ std NMI after running RJD-BASE on Digits dataset with varying $T$.}
    \label{effect_t}
\end{figure}

}
  {\section{NUMERICAL EXPERIMENTS}
\label{exp}

Following standard practice, we evaluate clustering quality with normalized mutual information (NMI), which rescales mutual information by the label entropies so that scores lie in $[0,1]$ ($1$ being perfect agreement and $0$ being independence) \cite{scikit-learn}.

Because of space restrictions, we only show a few representative plots that are representative across methods and datasets. A full account - with expanded methodology and results, including plots and numerical metrics for every method on every dataset - can be found in~\cite{arxiv_version}.

\subsection{Direct Optimization of Smoothness Objectives}
\label{direct_opt}

We compare single-directional smoothness and BASE smoothness by directly optimizing the convex weights \(\pmb{\mu}\in\Delta^{m-1}\) on the combined operator \(\mathbf{L}(\pmb{\mu})=\sum_{i=1}^m \mu_i \mathbf{L}_i\). Using projected gradient ascent with Euclidean projection onto the simplex \cite{manopt}, we maximize either $\lambda_1(\mathbf{L}(\pmb{\mu}))$ (single-directional) or $\sum_{j=1}^{k}\lambda_j(\mathbf{L}(\pmb{\mu}))$ (BASE), starting from uniform \(\pmb{\mu}\). At each iterate we take the bottom-\(k\) eigenvectors of \(\mathbf{L}(\pmb{\mu})\), run \(k\)-means on the rows, and record NMI versus the objective value.

On Caltech-7 (Fig.~\ref{fig:direct_opt_caltech}), the single-directional run plateaus around 0.499 NMI, whereas the BASE objective reaches 0.530 NMI, indicating a benefit from targeting the full bottom-\(k\) spectrum. The results for the other datasets follow a similar pattern; see Section \ref{summary_table} as well as \cite{arxiv_version}. 

\begin{figure}[h]
    \centering
    \begin{minipage}[c]{0.48\linewidth}
        \includegraphics[width=\linewidth]{pados2a.png}
        {\footnotesize (a) Single-directional: NMI vs.\ \(\lambda_1(\mathbf{L}(\pmb{\mu}))\).}
    \end{minipage}\hfill
    \begin{minipage}[c]{0.48\linewidth}
        \includegraphics[width=\linewidth]{pados2b.png}
        {\footnotesize (b) BASE: NMI vs.\ \(\sum_{j=1}^{k}\lambda_j(\mathbf{L}(\pmb{\mu}))\).}
    \end{minipage}
    \caption{Direct optimization on Caltech-7 with 30 iterations: NMI versus smoothness objective.}
    \label{fig:direct_opt_caltech}
\end{figure}

\subsection{RJD-BASE: Trial Landscape and Selection}

Direct optimization of the smoothness objectives provides high-quality embeddings, especially in the BASE objective case. Here, in an effort to \revise{simplify}  and parallelize, we evaluate whether the BASE smoothness objective can serve as an effective selection criterion across many RJD trials.

In Figure \ref{fig:rjd_scatter_all}, we run RJD-BASE for T=3000 and plot NMI against the BASE smoothness objective $\sum_{j=1}^{k} \lambda_j(\mathbf{L(\pmb{\mu})})$ for each RJD instance. \revise{We display four representative datasets due to space constraints and observe that} RJD-BASE yields an above average RJD instance selection with respect to the end-goal NMI.

\begin{figure}[H]
    \centering
    \begin{minipage}[c]{0.48\linewidth}
        \centering
        \includegraphics[width=\linewidth]{pados3a.png}
        {\footnotesize (a) Weighted SBM}
    \end{minipage}\hfill
    \begin{minipage}[c]{0.48\linewidth}
        \centering
        \includegraphics[width=\linewidth]{pados3b.png}
        {\footnotesize (b) Caltech-7}
    \end{minipage}

    \vspace{0.5em}

    \begin{minipage}[c]{0.48\linewidth}
        \centering
        \includegraphics[width=\linewidth]{pados3c.png}
        {\footnotesize (c) Digits}
    \end{minipage}\hfill
    \begin{minipage}[c]{0.48\linewidth}
        \centering
        \includegraphics[width=\linewidth]{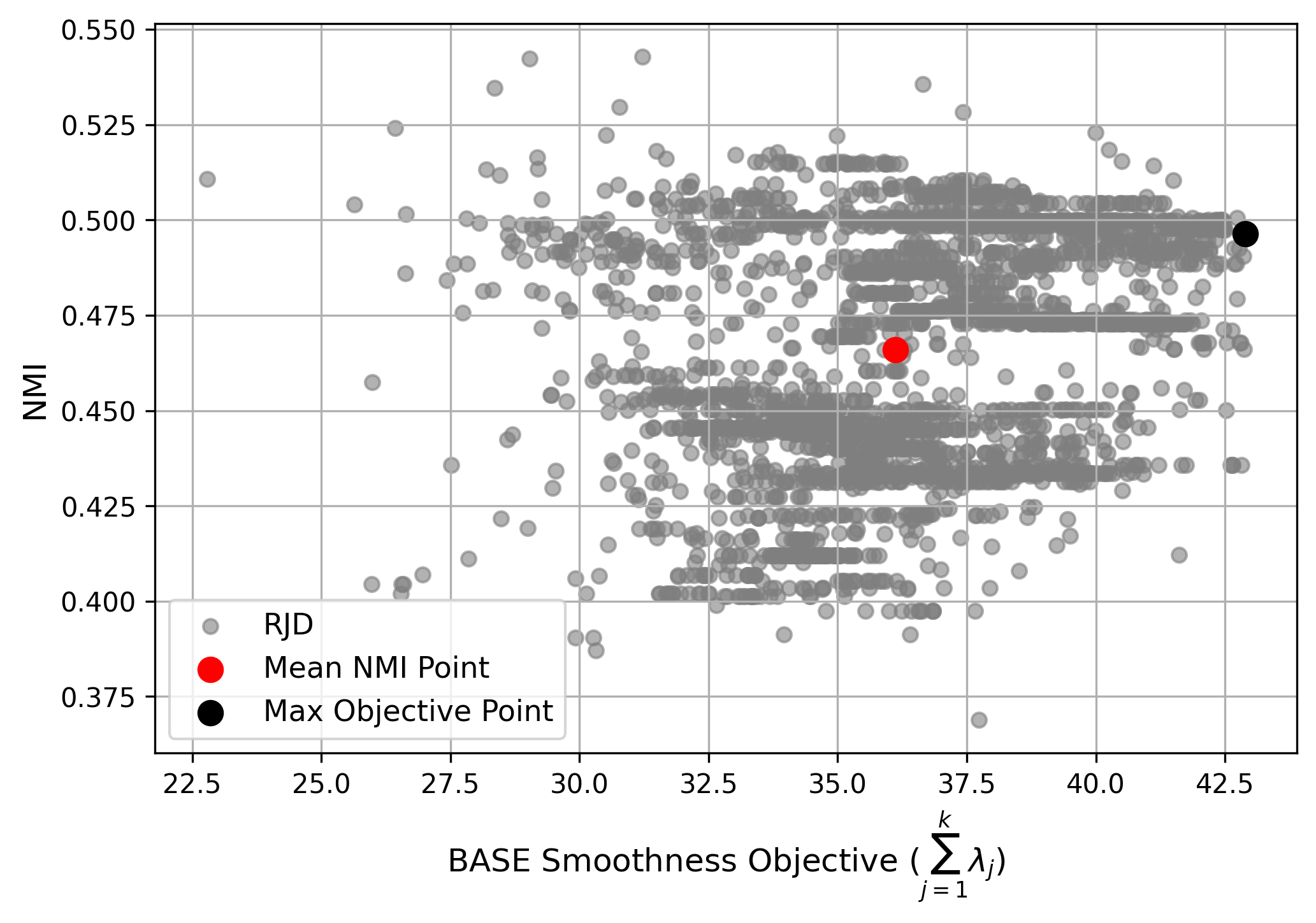}
        {\footnotesize (d) MSRC}
    \end{minipage}

    \caption{Scatter plot of NMI vs. BASE smoothness objective for 3000 independent RJD instance. Mean NMI point indicated in red and point maximizing BASE smoothness objective (i.e. that which would be selected by RJD-BASE) in black.}
    \label{fig:rjd_scatter_all}
\end{figure}

To more concretely quantify the effectiveness of this selection rule, we perform 1000 trials where in each trial we run RJD-BASE with $T=10$ and record whether the selected embedding’s NMI is above the global mean. This yields an empirical estimate of how often RJD-BASE beats a random draw in expectation even with very small $T$. \revise{The weighted SBM, Caltech-7, Digits, Nutrimouse, MSRC, and NGM datasets achieved 57\%, 66\%, 96\%, 76\%, 85\%, and 99\% above-average embeddings, respectively.}

\subsection{RJD-BASE with QN-Diag and JADE}
\label{rjdbase_qn_diag_jade}

Approximate joint diagonalization methods (QN-Diag, JADE) optimize full-spectrum off-diagonal energy, whereas spectral clustering only focusses on the bottom-\(k\) subspace. It is natural to ask whether iteratively refining a good bottom-\(k\) embedding from \textbf{RJD-BASE} results in any advantage.

In Figure~\ref{fig:jd_refine_digits}, on the Digits dataset, we run RJD-BASE with \(T=200\), take the selected trial and use a full eigendecomposition as initialization to QN-Diag and JADE, tracking NMI in every iteration. It turns out that neither method improves on the RJD-BASE initialization. In fact, the JD methods generally degrade NMI and this holds as well when probing QN-Diag on the set of all above-average RJD trials. 

Similar observations are made \revise{for all other datasets}; see Section~\ref{summary_table} as well as~\cite{arxiv_version} for \revise{numerical results on these datasets}.

\begin{figure}[H]
    \centering
    \includegraphics[width=3.0in]{pados4.png}
    \caption{NMI vs.\ iteration learning curves for QN-Diag/JADE initialized with RJD-BASE on Digits dataset.}
    \label{fig:jd_refine_digits}
\end{figure}


\subsection{Impact of Modality Closeness}

\revise{In Section~V.D of \cite{arxiv_version}, we investigate how the relative similarity between modalities affects the performance of RJD-BASE using the weighted SBM dataset. In the experiment, all modalities share the same ground-truth partition, but we explicitly control how close the graph Laplacians are to one another. As expected, we find that when all modalities are identical, RJD-BASE reduces to single-view spectral clustering and as separation increases, performance improves, peaking for moderately distinct but still correlated modalities. This regime corresponds to complementary views that share structure while providing additional information. When modalities become too dissimilar, performance degrades, reflecting the diminishing benefit of aggregating unrelated views.}}
\section{CLUSTERING EVALUATION}
\label{summary_table}

In this section, we consolidate results of all our clustering experiments and compare the performance of RJD-BASE to competing methods and baselines on our datasets. In each case, the goal is to compute a spectral embedding matrix $\mathbf{X} \in \mathbb{R}^{N \times k}$, and then apply $k$-means clustering to its rows. Each method is evaluated in terms of its final NMI after clustering.

The methods we compare are as follows:

\begin{itemize}
    \item \textbf{Single Laplacian (per modality):} Computed embedding from each individual Laplacian $\mathbf{L_i}$ diagonalization.
    
    \item \textbf{RJD Average:} The average NMI across 200 RJD trials, as in Section~\ref{exp}-\ref{rjdbase_qn_diag_jade}.
    
    \item \textbf{RJD-BASE:} RJD with BASE smoothness objective selection, as in Section~\ref{exp}-\ref{rjdbase_qn_diag_jade} ($T=200$).
    
    \item \textbf{QN-Diag:} Standard QN-Diag.
    
    \item \textbf{QN-Diag (RJD-BASE init.):} QN-Diag initialized with RJD-BASE, as in Section~\ref{exp}-\ref{rjdbase_qn_diag_jade}.

    \item \textbf{JADE:} Standard JADE.
    
    \item \textbf{JADE (RJD-BASE init.):} JADE initialized with RJD-BASE as in Section~\ref{exp}-\ref{rjdbase_qn_diag_jade}.
    
    \item \textbf{MVSC:} Standard MVSC.

    \item \textbf{CoReg-MVSC:} Standard CoReg-MVSC.
    
    \item \textbf{MV-KMeans:} MV-KMeans with k-means++ centroid initialization, for consistency only applied to Digits dataset (2 modalities).

    \item \textbf{MV-SphKMeans:} Standard MV-SphKMeans, for consistency only applied to Digits dataset (2 modalities).

    \item \textbf{Single-Directional Smoothness Objective:} The direct maximization of $\lambda_1(\mathbf{L(\pmb{\mu})})$ using projected gradient ascent, as presented in Section~\ref{exp}-\ref{direct_opt}.
    
    \item \textbf{BASE Smoothness Objective:} The direct maximization of $\sum_{j=1}^{k} \lambda_j(\mathbf{L(\pmb{\mu})})$ using projected gradient ascent, as presented in Section~\ref{exp}-\ref{direct_opt}.
\end{itemize}

\revise{See \cite{arxiv_version} for implementation details.}

Table~\ref{tab:all_methods_nmi} summarizes clustering performance across all datasets and methods. 
RJD-BASE consistently outperforms the average RJD trial. Both JD algorithms show degradation relative to RJD-BASE, reinforcing the \revise{argument presented in Section~\ref{background}.\ref{jd_background}; full-spectrum JD criteria could be misaligned with the clustering
objective, which depends only on the bottom-k subspace. A notable exception occurs with JD using QN-Diag for Nultrimouse dataset, suggesting that  the KL-divergence loss employed by QN-Diag is potentially better aligned with  this data. Leveraging this KL-divergence  loss remains an objective for future research.}

Classical multiview clustering methods yield mixed, but generally inferior results as compared to RJD-BASE. 
Direct optimization of our BASE smoothness objective achieves high NMI scores as expected but at a higher, non-parallelizable cost. The dashed lines indicate that the two-modality method was not applicable to the dataset.

\begin{table*}[t]
\centering
\caption{Clustering performance (NMI) across methods and datasets.`-' indicates method not applicable to dataset and `X' indicates too high runtime.} 
\label{tab:all_methods_nmi}
\setlength{\tabcolsep}{3pt}
\renewcommand{\arraystretch}{1.2}
\scriptsize
\begin{tabular}{|p{80pt}|p{80pt}|p{45pt}|p{45pt}|p{45pt}|p{45pt}|p{90pt}|}
\hline
\textbf{Method} & \textbf{Weighted SBM} & \textbf{Caltech-7} & \textbf{Digits} & \revise{\textbf{Nutrimouse}} & \revise{\textbf{MSRC}} & \revise{\textbf{Nonlinear Gaussian Mixture}}\\
\hline
Single Laplacians & 
\begin{tabular}[c]{@{}c@{}}0.640 (1)\\0.512 (2)\\0.624 (3)\\0.659 (4)\end{tabular} & 
\begin{tabular}[c]{@{}c@{}}0.158 (Gabor)\\0.322 (Wavelet)\\0.355 (Centrist)\\0.421 (HOG)\\0.341 (GIST)\\0.507 (LBP)\end{tabular} &
\begin{tabular}[c]{@{}c@{}}0.665 (DCT)\\0.607 (Patch)\end{tabular} &
\begin{tabular}[c]{@{}c@{}}\revise{0.234 (Gene)}\\\revise{0.666 (Lipid)}\end{tabular} &
\begin{tabular}[c]{@{}c@{}}\revise{0.399 (LBP)}\\\revise{0.332 (Gray)}\\ \revise{0.437 (Gabor)}\\\revise{0.437 (Edge)}\end{tabular} &
\begin{tabular}[c]{@{}c@{}}\revise{0.843 (1)}\\\revise{0.839 (2)}\end{tabular}\\
\hline
RJD Average & 0.711 ($\pm$0.012) & 0.491 ($\pm$0.002) & 0.650 ($\pm$0.001) & \revise{0.666 ($\pm$0.011)} & \revise{0.469 ($\pm$0.001)} & \revise{0.846}\\
\hline
\textbf{RJD-BASE} & \textbf{0.803} & \textbf{0.531} & \textbf{0.665} & \revise{\textbf{0.667}} & \revise{\textbf{0.500}} & \revise{\textbf{0.850}}\\
\hline
QN-Diag & 0.743 & 0.285 & 0.627 & \revise{0.827} & \revise{0.503} & \revise{X}\\
\hline
QN-Diag (RJD-BASE init.) & 0.743 & 0.274 & 0.627 & \revise{0.827} & \revise{0.500} & \revise{X}\\
\hline
JADE & 0.773 & 0.415 & 0.650 & \revise{0.640} & \revise{0.440} & \revise{X}\\
\hline
JADE (RJD-BASE init.) & 0.601 & 0.024 & 0.075 & \revise{0.325} & \revise{0.087} & \revise{X}\\
\hline
MVSC & 0.737 & 0.476 & 0.661 & \revise{0.601} & \revise{0.520} & \revise{0.841}\\
\hline
CoReg-MVSC & 0.688 & 0.431 & 0.679 & \revise{0.372} & \revise{0.479} & \revise{0.817}\\
\hline
MV-KMeans & -- & -- & 0.489 & \revise{0.613} & \revise{--} & \revise{0.841}\\
\hline
MV-SphKMeans & -- & -- & 0.528 & \revise{0.727} & \revise{--} & \revise{0.841}\\
\hline
Single-Dir. Smoothness & 0.774 & 0.499 & 0.682 & \revise{0.601} & \revise{0.499} & \revise{0.846}\\
\hline
\textbf{BASE Smoothness} & \textbf{0.780} & \textbf{0.530} & \textbf{0.682} & \revise{\textbf{0.601}} & \revise{\textbf{0.476}} & \revise{\textbf{0.846}} \\
\hline
\end{tabular}
\end{table*}

Table~\ref{tab:wall_clock_times} reports approximate wall-clock times real elapsed time for all methods on each dataset. 
All runs were executed uniformly under an identical software environment, using a single process with default library threading; no GPU or distributed computation was used. 
RJD-BASE was run with $T{=}200$ without parallelization. \revise{The reported runtimes measure the core joint diagonalization or eigenvalue 
optimization step, the k-means clustering, and exclude graph construction.} The results highlight the practical efficiency of RJD-BASE relative to full-spectrum diagonalization.

\begin{table*}[t]
\centering
\caption{Approximate wall-clock runtime for each method and dataset. RJD-BASE uses $T=200$ without parallelization. `-' indicates method not applicable to dataset and `X' indicates too high runtime.}
\label{tab:wall_clock_times}
\setlength{\tabcolsep}{3pt}
\renewcommand{\arraystretch}{1.2}
\scriptsize
\centering
\begin{tabular}{|p{80pt}|p{80pt}|p{45pt}|p{45pt}|p{45pt}|p{45pt}|p{90pt}|}
\hline
\textbf{Method} & \textbf{Weighted SBM} & \textbf{Caltech-7} & \textbf{Digits} & \revise{\textbf{Nutrimouse}} & \revise{\textbf{MSRC}} & \revise{\textbf{Nonlinear Gaussian Mixture}}\\
\hline
QN-Diag (200 iters) & $\sim$2 min & $\sim$15 min & $\sim$35 min & \revise{$\sim$10 sec} & \revise{$\sim$5 min} & \revise{X}\\
\hline
JADE (200 iters) & $\sim$4 min & $\sim$900 min & $\sim$7000 min & \revise{$\sim$10 sec} & \revise{$\sim$5 min} & \revise{X}\\
\hline
\textbf{RJD-BASE} ($T=200$) & \textbf{\boldmath$\sim$1 s}
 & \textbf{\boldmath$\sim$20 s}
 & \textbf{\boldmath$\sim$90 s} & \revise{\textbf{$\sim$1 s}} & \revise{\textbf{$\sim$3 s}} & \revise{\textbf{$\sim$3 min}}
 \\
\hline
MVSC & $\sim$1 s & $\sim$2 min & $\sim$2 min & \revise{$\sim$1 s} & \revise{$\sim$4 min} & \revise{$\sim$4 min}\\
\hline
CoReg-MVSC & $\sim$1 s & $\sim$30 s & $\sim$30 s & \revise{$\sim$1 s} & \revise{$\sim$1 min} & \revise{$\sim$30 s}\\
\hline
MV-KMeans & -- & -- & $\sim$1 s & \revise{$\sim$1 s} & \revise{--} & \revise{$\sim$1 s}\\
\hline
MV-SphKMeans & -- & -- & $\sim$1 s & \revise{$\sim$1 s} & \revise{--} & \revise{$\sim$1 s}\\
\hline
Single-Dir. Smoothness (30 iters) & $\sim$5 s & $\sim$30 s & $\sim$10 min & \revise{$\sim$2 s} & \revise{$\sim$30 sec} & \revise{$\sim$10 min}\\
\hline
\textbf{BASE Smoothness} (30 iters) & \textbf{\boldmath$\sim$5 s}
 & \textbf{\boldmath$\sim$30 s}
 & \textbf{\boldmath$\sim$10 min} & \revise{\textbf{$\sim$2 s}} & \revise{\textbf{$\sim$30 sec}} & \revise{\textbf{$\sim$10 min}}
 \\
\hline
\end{tabular}
\end{table*}

\section{CONCLUSION}
\label{conc}

We proposed a new framework for multimodal spectral clustering that introduces randomization as a core component of the embedding generation process and pairs it with a principled, task-aligned selection rule. By sampling random convex combinations of modality-specific Laplacians and evaluating them using a novel $k$-dimensional smoothness criterion - \textbf{Bottom-$k$ Aggregated Spectral Energy (BASE)} - our method efficiently explores the space of spectral embeddings without requiring optimization, initialization, or iterative refinement.

Our experiments demonstrate that the proposed algorithm,  \textbf{RJD-BASE}, reliably selects high-quality embeddings across synthetic and real-world datasets and effectively integrates information from different modalities to improve clustering. It outperforms classical techniques while operating at a low computational cost.

We believe these findings suggest a broader potential for randomized, selection-based strategies in spectral learning, possibly sparking future exploration of principled selection criteria, hybrid randomized schemes, and applications beyond clustering.


\section*{ACKNOWLEDGMENT}
The second author gratefully acknowledges support from the MIT International Science and Technology Initiatives.

\bibliographystyle{IEEEtran}
\bibliography{references}




\vfill\pagebreak

\end{document}